\newtheorem{theorem}{Theorem}[section]
\newtheorem{proposition}[theorem]{Proposition}
\newtheorem{lemma}[theorem]{Lemma}
\newtheorem{corollary}[theorem]{Corollary}
\newtheorem{conjecture}[theorem]{Conjecture}
\newtheorem*{conjecturenonum}{Conjecture}
\newtheorem{notation}[theorem]{Notation}
\newtheorem{question}[theorem]{Question}
\newtheorem{assumption}[theorem]{Assumption}
\theoremstyle{definition}
\newtheorem{remark}[theorem]{Remark}
\newtheorem{example}[theorem]{Example}
\newtheorem{definition}[theorem]{Definition}
\numberwithin{equation}{section}
\newcommand{\sayR}[1]{\say[R]{#1}}
\newcommand{\sayT}[1]{\say[T]{#1}}
\newcommand{\vs}{\vspace{0.3cm}}
\newcommand{\udim}{\underline{\dim}\,}
\newcommand{\att}{\mathrm{Att}}
\newcommand{\scss}{\mathrm{scss}\,}
\newcommand{\ext}{\mathrm{ext}}    
\newcommand{\mo}{M^{\Theta-\mathrm{st}}_\alpha(Q)}
\newcommand{\ros}{R^{\Theta-\mathrm{st}}_\alpha(Q)}
\newcommand{\Gl}{\mathrm{Gl}}
\newcommand{\scl}{\mathrm{sl}}
\newcommand{\tQ}{\tilde{Q}}
\newcommand{\ses}[3]{0\rightarrow #1\rightarrow #2\rightarrow#3\rightarrow 0}
\newcommand{\sesB}[5]{0\rightarrow #1\xrightarrow{#4} #2\xrightarrow{#5} #3\rightarrow 0}
\newcommand{\sk}[2]{\langle #1,#2\rangle}
\newcommand{\kk}{\Bbbk}
\newcommand{\hn}{\mathrm{hn}}
\newcommand{\Z}{\mathbb{Z}}
\newcommand{\id}{\mathrm{id}}
\newcommand{\Pn}{\mathbb{P}}
\newcommand{\Q}{\mathbb{Q}}
\newcommand{\F}{\mathbb{F}}
\newcommand{\C}{\mathbb{C}}
\DeclareMathOperator{\dimv}{\underline{dim}}
\newcommand{\cB}{\mathcal{B}}
\newcommand{\cE}{\mathcal{E}}
\newcommand{\cR}{\mathcal{R}}
\newcommand{\cU}{\mathcal{U}}
\newcommand{\zl}{\ensuremath{\lambda}}
\newcommand{\ZZ}{\mathbb{Z}}
\newcommand{\CC}{\mathbb{C}}
\newcommand{\T}{\mathbb{T}}
\renewcommand{\phi}{\varphi}
\renewcommand{\emptyset}{\varnothing}
\renewcommand{\tilde}[1]{\widetilde{#1}}
\newcommand{\setst}[2]{\left\{ #1 \mid #2 \right\}}
\def\Ddots{\mathinner{\mkern1mu\raise\p@
\vbox{\kern7\p@\hbox{.}}\mkern2mu
\raise4\p@\hbox{.}\mkern2mu\raise7\p@\hbox{.}\mkern1mu}}
\DeclareMathOperator{\im}{image} 
\DeclareMathOperator{\coker}{coker}
\newcommand{\Hom}{\operatorname{Hom}}
\DeclareMathOperator{\diag}{diag}
\DeclareMathOperator{\rank}{rank}
\DeclareMathOperator{\Ext}{Ext}
\DeclareMathOperator{\End}{End}
\DeclareMathOperator{\rep}{rep}
\newcommand{\Gr}{\mathbf{Gr}}
\newcommand{\onto}{\twoheadrightarrow}
\newcommand{\into}{\hookrightarrow}
\newcommand{\xto}[1]{\xrightarrow{#1}}
\begin{document}
\title{Tree normal forms for quiver representations}

\author{Ryan Kinser}
\address{Department of Mathematics, University of Iowa, Iowa City, Iowa 52242, USA}
\email{ryan-kinser@uiowa.edu}

\author{Thorsten Weist}
\address{Bergische Universit\"at Wuppertal, Gau{\ss}str. 20, 42097 Wuppertal, Germany}
\email{weist@uni-wuppertal.de}

\begin{abstract}
We explore methods for constructing normal forms of indecomposable quiver representations. The first part of the paper develops homological tools for recursively constructing families of indecomposable representations from indecomposables of smaller dimension vector. This is then specialized to the situation of tree modules, where the existence of a special basis simplifies computations and gives nicer normal forms. Motivated by a conjecture of Kac, we use this to construct cells of indecomposable representations as deformations of tree modules. The second part of the paper develops geometric tools for constructing cells of indecomposable representations from torus actions on moduli spaces of representations. As an application we combine these methods and construct families of indecomposables - grouped into affine spaces - which actually gives a normal form for all indecomposables of certain roots.
\end{abstract}



\maketitle

\setcounter{tocdepth}{1}
\tableofcontents

\section{Introduction}
\subsection{Background and Motivation}\label{intro1}
A central problem in the theory of finite-dimensional algebras is not only to determine all indecomposable representations of an algebra, but also to give normal forms, grouped into meaningful families when possible.  Of course one does not hope to accomplish this uniformly for all algebras, but rather to develop techniques that can be applied to certain classes of quivers and dimension vectors.  

This article contributes to this program by constructing families of indecomposable representations which can be thought of as deformations of a given quiver representation $M$, under suitable conditions. Optimally, these deformations are given by an affine space which, moreover, parametrizes pairwise non-isomorphic indecomposable representations. If $M$ has a nice structure, e.g. if it is a tree module, we immediately get a normal form for the deformed representations.

A main theme of our work is that we should not expect a single most general method to construct indecomposables, but many techniques with incomparable assumptions which can be used in parallel. 
Thus the methods and results of this paper come in two distinct flavors: homological and geometric, which can be combined to construct different kind of indecomposable representations. For many dimension vectors, this gives a partial classification of the indecomposable representations including a normal form. For certain dimension vectors, we even obtain a full classification. In all these cases, we can show that this subset of the set of all isomorphism classes of indecomposables has a cellular decomposition into affine spaces.

These kind of decompositions into affine spaces are particularly interesting as our methods aim to bring some understanding to a conjecture of V. Kac from the early 1980s.
Fixing a quiver $Q$ and a dimension vector $\alpha$, define $a_\alpha(q)$ as the number of absolutely indecomposable representations over $\mathbb F_q$ of dimension $\alpha$ (i.e. those which remain indecomposable after extension of scalars to an algebraic closure of $\mathbb F_q$).
Kac proved that the function $a_\alpha(q)$ is polynomial in $q$ with integer coefficients, $a_\alpha(q)=\sum_{i=0}^nc_iq^{i}$ for some $c_i \in \mathbb{Z}$, and conjectured \cite[Conjecture 2]{Ka83} that each $c_i \geq 0$.
Only recently did Hausel, Letellier and Rodriguez-Villegas prove Conjecture 2 \cite{HLRV}. 
Kac's next conjecture is significantly more far reaching.

\begin{conjecturenonum}\cite[Conjecture 3]{Ka83}
The set of isomorphism classes of indecomposable representations of $Q$ of dimension vector $\alpha$ admits a cellular decomposition by locally closed subvarieties isomorphic to affine spaces, with each $c_i$ being the number of cells of dimension $i$.
 \end{conjecturenonum}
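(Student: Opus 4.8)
The plan is to realize the isomorphism classes of (absolutely) indecomposable representations as the cells of a Bia\l ynicki--Birula type decomposition attached to a torus action, combining the homological deformation families with the geometric machinery advertised in the abstract. I would not attempt to endow the full set of indecomposables with a single variety structure at once; instead I would stratify by stability. First, fix a generic stability $\Theta$ and work on the smooth moduli space $\mo$ of $\Theta$-stable representations, whose point count over $\F_q$ is governed by the stable part of $a_\alpha(q)$. One then recovers the remaining indecomposables by an induction over Harder--Narasimhan type, gluing in the contributions indexed by the $\hn$-strata; the homological recursion for building indecomposables from indecomposables of smaller dimension vector provides exactly the affine deformation families needed to parametrize each stratum.

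The geometric heart is a torus action. I would let $\TT=\C^*$ (or a larger torus) act on $\ros$ by rescaling the arrows according to a chosen grading of $Q$, descending to an action on $\mo$ that commutes with the residual $\Gl_\alpha/\C^*$ symmetry. Since $\mo$ is smooth and the action has well-defined limits, the attracting sets $\att$ of the fixed components furnish a decomposition into locally closed pieces. The key point to verify is that each such attracting cell is an affine space: this should follow once the fixed locus is shown to consist of graded (equivalently, covering-quiver) representations whose moduli are themselves built, inductively, from affine cells, so that the Bia\l ynicki--Birula fibration over each fixed component is an affine bundle over an affine base.

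To match the cell dimensions with the coefficients $c_i$, I would compute the class of $\mo$ in a suitable Grothendieck ring and compare with the Kac polynomial. The relevant dimension counts are controlled by the Euler form via $\sk{\alpha}{\alpha}$ together with the dimensions of the $\Ext$-groups governing the deformation families: a cell of attracting dimension $i$ must contribute a term $q^i$, and \emph{purity} of the cohomology of $\mo$ forces the total count to be the polynomial $\sum_i c_i q^i$ with no cancellation. The deformation families coming from tree modules make these counts explicit, since the special basis lets one read the dimension of each family directly off combinatorial data.

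The main obstacle is precisely the passage from the stable locus to all indecomposables. For a general dimension vector there is no single projective moduli space carrying every indecomposable, automorphism groups jump along the non-stable strata, and the induction over Harder--Narasimhan type must be shown to assemble the strata into a bona fide cellular decomposition rather than merely a set-theoretic partition. Controlling the attracting cells across all strata simultaneously, and proving that the resulting decomposition is by affine spaces of the correct dimensions (equivalently, establishing the requisite purity), is where I expect the essential difficulty to lie; our methods are designed to settle this completely only for special families of roots, which is the source of the partial-versus-full classification dichotomy described above.
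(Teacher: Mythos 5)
There is a fundamental mismatch here: the statement you are trying to prove is Kac's Conjecture~3, which the paper does \emph{not} prove --- it is quoted verbatim from Kac and used purely as motivation, and the paper's own refinement (the cellular tree normal form conjecture) is explicitly labelled a conjecture and described as ``likely quite difficult.'' So there is no proof in the paper to compare against, and your proposal must stand on its own as a proof of an open problem. It does not: it is a research program whose hardest steps are deferred rather than resolved. You say yourself that ``the main obstacle is precisely the passage from the stable locus to all indecomposables'' and that this is ``where I expect the essential difficulty to lie''; a text that identifies the central difficulty and leaves it open is a plan, not a proof.

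Concretely, the gaps are these. First, the set of isomorphism classes of indecomposables of dimension $\alpha$ has no canonical variety structure (the paper notes the conjecture is ``more inspirational than literal''), so your proposed gluing of Harder--Narasimhan strata into ``a bona fide cellular decomposition'' is not even a well-posed step until you specify the ambient object being decomposed; this is a definitional issue your proposal never settles. Second, the Bia{\l}ynicki--Birula machinery you invoke requires $M^{\Theta-\mathrm{st}}_\alpha(Q)$ to be smooth and \emph{projective}, which holds only when $\alpha$ is $\Theta$-coprime; for divisible or isotropic roots (e.g.\ $(d,d)$ for the Kronecker quiver, or $n\delta$) no such generic $\Theta$ exists, semistable moduli parametrize polystable rather than indecomposable classes, and stable points may not exist at all, so your ``stratify by stability'' induction breaks at its base case. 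Third, the torus fixed locus is a disjoint union of moduli spaces for the universal abelian covering quiver; these components can be positive-dimensional, and the claim that they are ``themselves built, inductively, from affine cells'' is exactly the conjecture again, one level up --- the induction has no anchor. Fourth, purity plus point counting over $\F_q$ would at best match Betti numbers with the coefficients of $a_\alpha(q)$; it does not produce locally closed affine cells of the stated dimensions, nor does it handle the jumping automorphism groups on unstable strata. The paper's actual contribution is to prove cell decompositions and tree normal forms for special classes of roots (isotropic Schur roots with Schur level one, certain subspace-quiver and extended-Kronecker roots), precisely because the general statement remains out of reach by these methods.
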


It should be noted that the conjecture is more inspirational than literal, since the set of indecomposable isomorphism classes of a fixed dimension vector does not have a canonical structure of a variety, and generally depends on the underlying field $\kk$.
We note that for this conjecture to be true, it requires $a_\alpha(1)=\sum_i c_i$ to be the total number of cells of this cell decomposition.

We take Kac's Conjecture 3 as a major motivation for developing methods to construct cells of indecomposable representations, with aim at bringing new ideas to the classification problem of indecomposable representations.
It should be noted from the start that cell decompositions of varieties are generally far from unique and usually involve making some choices, for example of a torus action on the variety. Thus we should not expect cell decompositions of spaces of representations to canonically arise from the algebra.  Rather, we aim to develop practical methods with manageable choices that induce cell decompositions.


\subsection{Results}
In Section \ref{sec:homological} we develop recursive methods to construct cells of indecomposables in a given dimension vector from cells of indecomposables in smaller dimension vectors.  
The main idea here is to fix a representation $M$ and consider the space of self-extensions $\Ext(M,M)$ as a parameter space for deformations of $M$.  
In general, this will produce representations which are decomposable, and furthermore there will be distinct parameters which yield isomorphic representations, see \cite{Wei15}.  
We introduce the notions of \emph{strong} and \emph{separating} parameter spaces (Definition \ref{def:strongseparating}) for those which yield indecomposable and pairwise nonisomorphic representations, respectively.  
Our first main results are Theorems \ref{thm:strong} and \ref{thm:separating}, which give recursive constructions of strong and separating parameter spaces under suitable conditions.  
These can be used to produce cells of pairwise nonisomorphic indecomposables.  

In Section \ref{sec:trees} we recall the notion of \emph{tree modules}, which are quiver representations with a particularly nice basis. This can be utilized so that the application of the methods of Section \ref{sec:homological} to tree modules can be often used to derive a normal form for the deformed representations.
Tree modules are known to exist in abundance \cite{MR1090218,Ringel:1998gf,MR2578596,Kinser10,Wei12,Ringel13},
and it has been conjectured in \cite{Kinser13} that there are sufficiently many tree modules to have one in each cell in the setting of Kac's Conjecture 3. We make this more precise in Definition \ref{def:tnf} and Conjecture \ref{conj:ctnf}, supported by an example in Section \ref{sec:subspace}.
A method for recursively constructing cells of indecomposables as deformations of tree modules is given in Theorem \ref{thm:treecells}.

In Section \ref{sec:geometric} we utilize a natural torus action on moduli spaces of representations to construct cells of pairwise nonisomorphic indecomposables, when the ground field is $\C$.  We first consider a torus $(\C^*)^{|Q_1|}$ of rank equal to the number of arrows of $Q$, acting in the natural way with each copy of $\C^*$ scaling the matrices over the corresponding arrow.  
This action, also described in \cite{Wei13}, commutes with the action of the base change group on quiver representations and thus descends to the corresponding moduli space  $M^{\Theta-\mathrm{st}}_\alpha(Q)$ of stable representations (for any weight $\Theta$).  We then fix a one-dimensional subtorus $\C^* \subset (\C^*)^{|Q_1|}$ and investigate the corresponding Bia{\l}ynicki-Birula decomposition \cite{BB73} of  $M^{\Theta-\mathrm{st}}_\alpha(Q)$.  

Each fixed point of the $\C^*$-action gives a cell of pairwise nonisomorphic indecomposables (even the dimension of which depends on the choice of $\C^* \subset (\C^*)^{|Q_1|}$).  Stopping here, however, yields no concrete understanding of this cell of indecomposables, such as a normal form.  The aim of this section is to lift the cell in $M^{\Theta-\mathrm{st}}_\alpha(Q)$ to the corresponding representation variety, thus producing normal forms of the indecomposables in this cell. We point out that the points of the lifted cell can again be understood as deformations of the lifted fixed point. The major advantage of this approach is that the deformation space is automatically strong and separating.

Finally, in Section \ref{sec:applications} we demonstrate how to use and combine these methods in various applications, such as for isotropic Schur roots. As a starting point for future considerations, we also introduce certain invariants which can be attached to any root of a quiver. Actually, together with the Euler form of a root, these invariants seem to measure the complexity of the classification problem for indecomposables having this root as dimension vector.


\section{Definitions and notation}
\subsection{Quiver representations}
Here we briefly recall our definitions on quiver representations to establish notation.  More detailed background is available in many excellent textbooks, including \cite{ARS97,assemetal, schifflerbook, DWbook}.  
Let $\kk$ be a field and $Q$ be a quiver with vertices $Q_0$ and arrows $Q_1$.  Functions $s, t\colon Q_1 \to Q_0$ give source and target of an arrow $s(a) \xto{a} t(a)$.  
A \emph{representation of $Q$} over $\kk$ is denoted by $M=((M_q)_{q\in Q_0}, (M_a)_{a\in Q_1})$ where $M_q$ is a finite-dimensional $\kk$-vector space for each $q \in Q_0$, and $M_a\colon M_{s(a)} \to M_{t(a)}$ is a $\kk$-linear map for each $a\in Q_1$.
A \emph{morphism} between representations $\phi\colon M \to N$ is a collection of $\kk$-linear maps $\phi = (\phi_q\colon M_q \to N_q)_{q \in Q_0}$ satisfying $\phi_{t(a)} M_a = N_a \phi_{s(a)}$ for every $a \in Q_1$.  We write $\Hom_Q(M,N)$ or just $\Hom(M,N)$ for the $\kk$-vector space of morphisms between two representations.
We denote by $\rep_\kk(Q)$ the abelian, $\kk$-linear category of finite-dimensional $\kk$-representations of $Q$, or simply $\rep(Q)$ when $\kk$ is understood.  We write $\Ext(M,N)$ for $\Ext_{\rep(Q)}^1(M,N)$.
The \emph{dimension vector} of $M\in \rep(Q)$ is $\dimv M= (\dim_\kk M_q)_{q \in Q_0} \in \ZZ_{\geq 0}^{Q_0}$. We sometimes write $\alpha=\sum_{q\in Q_0}\alpha_qq$ for a dimension vector $\alpha\in\ZZ_{\geq 0}^{Q_0}$. On $\Z Q_0$ we have a non-symmetric bilinear form, called Euler form, defined by 
\begin{equation}\sk{\alpha}{\beta}=\sum_{q\in Q_0}\alpha_q\beta_q-\sum_{a\in Q_1}\alpha_{s(a)}\beta_{t(a)}.\end{equation}
By $\mathrm{Ind}(Q,\alpha)$ we denote the set of indecomposable representations with dimension vector $\alpha$.
 
\subsection{Parametrizing extensions}\label{sec:extensions}
For $M=(M_q)_{q \in Q_0}$ and $N=(N_q)_{q \in Q_0}$ two collections of finite-dimensional $\kk$-vector spaces,
we write
 \begin{equation}
R(N,M) := \bigoplus_{a\in Q_1}\Hom_\kk(N_{s(a)},M_{t(a)}).
 \end{equation}
Note that when $M, N \in \rep(Q)$, this does not depend on the maps $M_a$, $N_a$.

Now consider the linear map  
\begin{equation}
\begin{split}
d_{N,M}\colon\bigoplus_{q\in Q_0}\Hom_\kk(N_q,M_q)&\to R(N,M),\\
(f_q)_{q\in Q_0}&\mapsto(f_{t(a)}\,N_{a}-M_a\,f_{s(a)})_{a\in Q_1}.
\end{split}
\end{equation}
It is well known (e.g. \cite[Proposition 2.4.2]{DWbook}) that $\mathrm{ker}(d_{N,M})=\Hom(N,M)$ and $\mathrm{coker}(d_{N,M})\cong\Ext(N,M)$. Thus we have
\begin{equation}\label{eq:Euler}
\sk{N}{M}:=\sk{\udim N}{\udim M}=\dim\Hom(N,M)-\dim\Ext(N,M).
\end{equation}
Let 
\begin{equation}\label{eq:piNM}
\pi_{N,M} \colon R(N,M) \to \Ext(N,M)
\end{equation}
be the natural projection.  Concretely, an element $f =(f_a)_{a \in Q_1} \in R(N,M)$ determines the short exact sequence $\pi_{N,M}(f)\colon\sesB{M}{B(f)}{N}{\iota}{\pi}$, where the middle term is defined by the vector spaces $B(f)_q=M_q\oplus N_q$ for all $q\in Q_0$ and linear maps $B(f)_a=\begin{pmatrix}M_a &f_a\\0&N_a\end{pmatrix}$ for all $a\in Q_1$.  

\begin{definition}\label{def:representssubset}
We say a subset $U \subset R(N,M)$ \emph{represents a subset $E\subseteq \Ext(N,M)$} if the restriction of $\pi_{N,M}$ to $U$ gives a bijection of $U$ with $E$. If $E$ is a basis of $\Ext(N,M)$, we say that $U$ \emph{represents a basis} of $\Ext(N,M)$.
\end{definition}

Given a dimension vector $\alpha$ for $Q$, the associated \emph{representation variety} is 
\begin{equation}
R_\alpha(Q):= \bigoplus_{a\in Q_1}\Hom_\kk(\kk^{\alpha_{s(a)}},\kk^{\alpha_{t(a)}}).
\end{equation}
If $M\in\rep(Q)$ with $M_q = \kk^{\alpha_q}$ is of dimension vector $\alpha$, the spaces $R(M,M)$ and $R_\alpha(Q)$ are by definition the same. We regard $R_\alpha(Q)$ both as vector space and as an affine variety, depending on the context.  We use the former to emphasize that its points represent self-extensions or deformations (as defined in Section \ref{sec:paramreps}) of $M$, and the latter to emphasize its points correspond to all representations of dimension vector $\alpha$.

\subsection{Deformations of representations}\label{sec:paramreps}
Fix a point $M \in R_\alpha(Q)$. Each $\lambda \in R(M,M)$ defines a representation
\begin{equation}
M(\lambda) := M+ \lambda
\end{equation}
of the same dimension vector, with the sum taken in the vector space $R(M,M)$. This notation emphasizes that we think of $M(\lambda)$ as a deformation of $M$ by the parameter $\lambda$. 

\begin{definition}\label{def:strongseparating}
Let $M \in \rep(Q)$ and $U \subset R(M,M)$ a subset.
\begin{enumerate}[(1)]
\item We call $U$ \emph{strong} if $M(\lambda)$ is indecomposable for every $\lambda\in U$.
\item We call $U$ \emph{separating} if $M(\lambda)\ncong M(\mu)$ for every $\lambda,\mu\in U$ with $\lambda\neq\mu$.
\end{enumerate}
\end{definition}

\begin{example}\label{ex:kronecker}
Consider the modules of dimension vector $(1,1)$ for the (generalized) Kronecker quiver $K(n)=(\{0,1\},\{a_1,\ldots,a_n\})$ with $s(a_i)=0$ and $t(a_i)=1$ for $i=1,\ldots,n$.
This very simple example serves to illustrate the notation and terminology.

Starting with the two simple modules $S_0$ and $S_1$ of dimension $(1,0)$ and $(0,1)$ respectively, we have 
\begin{equation}
R(S_0,S_1) \cong \kk^{Q_1} \cong \Ext(S_0,S_1).
\end{equation}
We denote by $0\xto{a_i}1$ the vector of $R(S_0,S_1)$ which is $1$ in coordinate $a_i$ and 0 elsewhere. The set
\begin{equation}\cR_{S_0,S_1}=\{0\xto{a_i}1\mid i=1,\ldots,n\}\end{equation}
 represents a basis of $\Ext(S_0, S_1)$. The corresponding exact sequence has middle term $T_i$ of dimension $(1,1)$ which is the representation visualized in the same way: $T_{i}:=0\xto{a_i}1$.
For each $i$, the subset
\begin{equation}\cR_{T_i}=\{0\xto{a_j} 1\mid j\neq i\}\end{equation}
represents basis of $\Ext(T_i,T_i)$ such that $\langle \cR_{T_i}\rangle\subset R(T_i,T_i)$ is strong and separating.

Each $\lambda\in\langle \cR_{T_i}\rangle$ defines a deformation $T_i(\lambda)$ of $T_i=T_i(0)$ by
\begin{equation}T_i(\lambda)=((\kk,\kk),([\lambda_1],\ldots,[\lambda_{i-1}],[1],[\lambda_{i+1}],\ldots,[\lambda_n]),\end{equation}
thus $\langle \cR_{T_i}\rangle\subset R_{(1,1)}(K(n))$ is an $(n-1)$-dimensional affine cell of nonisomorphic indecomposables of dimension vector $(1,1)$. 

Note that by elementary considerations one can see that the isomorphism classes of $K(n)$ of dimension vector $(1,1)$ are parametrized by $\Pn^{n-1}$. Each of them can be constructed as a deformation in our language above, and we can make it so that each isomorphism class appears in exactly one of our cells by shrinking $T_i(\lambda)$ to the $(n-i)$-dimensional cell with $\lambda_1=\ldots=\lambda_{i-1}=0$.
\end{example}

Let $M \in R_\alpha(Q)$ and $N \in R_\beta(Q)$ be representations of $Q$.
As $\lambda \in R(M,M)$ and $\mu \in R(N,N)$ vary, we cannot naturally identify the spaces $\Ext(N(\mu), M(\lambda))$ (their dimension may vary even).  
However, we have for each $(\mu, \lambda)$ the surjective map
\begin{equation}\label{eq:pilm}
\pi_{\mu,\lambda}:=\pi_{N(\mu), M(\lambda)} \colon R(N,M) \onto \Ext(N(\mu), M(\lambda)), 
\end{equation}
defined in \eqref{eq:piNM} which we can use to compare these spaces by working with representatives in $R(N,M)$. 

Each triple $(\tau,\lambda,\mu)\in R(N,M) \times R(M,M)\times R(N,N)$  defines a short exact sequence 
$\pi_{\mu,\lambda}(\tau)\in\Ext(N(\mu),M(\lambda))$ with middle term $B(\tau,\lambda,\mu)$ given by
\begin{equation}B(\tau,\lambda,\mu)_q:=M_q\oplus N_q,\quad B(\tau,\lambda,\mu)_a:=
\begin{pmatrix}M(\lambda)_a&\tau_a\\0&N(\mu)_a\end{pmatrix}\end{equation}
for every $q\in Q_0$ and $a\in Q_1$ respectively.

\begin{definition}\label{def:universal}
Let $U\subseteq R(M,M)$ and $V\subseteq R(N,N)$ be subsets.
\begin{enumerate}[(1)]
\item A subset $W \subseteq R(N,M)$ is called \emph{universal for the pair} $(U,V)$ if $W$ represents a subset of $\Ext(N(\mu),M(\lambda))$ for all $(\lambda, \mu)\in U \times V$.\item If $\cR\subset R(N,M)$ represents a basis of $\Ext(N(\mu),M(\lambda))$ for all $(\lambda,\mu)\in U\times V$, we say that $\cR$ \emph{is a universal basis for} $(U,V)$.
\end{enumerate}
\end{definition}

For instance, if $\im d_{N,M}=\im d_{N(\mu),M(\lambda)}$ for all $(\lambda,\mu)\in U\times V$, every $\cR \subset R(N,M)$ representing a basis of $\Ext(N,M)$, is already a universal basis for $(U,V)$. A special case where this occurs is $\mathrm{supp}(M)\cap\mathrm{supp}(N)=\emptyset$, since both images are 0.

\section{Homological construction of families of indecomposables}\label{sec:homological}
\noindent Throughout this section we fix a quiver $Q$ and a field $\kk$.

\subsection{Extensions of indecomposables}
We start with a criterion for an extension of indecomposables to be indecomposable. 

\begin{lemma}\label{lem:sesmorphism}Let $M, N, M',N'\in\rep(Q)$. 
Consider a pair of short exact sequences 
\begin{equation*}
0 \to M \xto{\iota} B \xto{\pi} N \to 0
\quad \text{and} \quad
0 \to M' \xto{\iota'} B' \xto{\pi'} N' \to 0.
\end{equation*}
If the induced map $\Hom(B,B')\to \Hom(M,N')$ given by $\phi \mapsto \pi'\circ \phi\circ\iota$ is the zero map, then each $\phi \in \Hom(B,B')$ induces a morphism of short exact sequences
\begin{center}
  \begin{tikzcd}%
        0 \arrow[r] & M \arrow[r, "\iota"] \arrow[d, "\phi|_M"] & B \arrow[r, "\pi"]  \arrow[d, "\phi"] & N \arrow[r] \arrow[d, "\bar{\phi}"]  & 0 \\  
        0 \arrow[r] & M' \arrow[r, "\iota' "]  & B' \arrow[r, "\pi' "]  & N' \arrow[r]  & 0 
    \end{tikzcd}%
\end{center}
\end{lemma}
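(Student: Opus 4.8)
The plan is to observe that the single hypothesis $\pi' \circ \phi \circ \iota = 0$ does all the work, simultaneously producing both vertical maps via the universal properties of the kernel and the cokernel in the abelian category $\rep(Q)$. The whole argument is a short diagram chase, so I will not need any computation beyond rewriting this one equation in two ways.

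First I would construct the left vertical map $\phi|_M$. Since the bottom row is exact, $\ker \pi' = \im \iota'$. Reading the hypothesis as $\pi' \circ (\phi \circ \iota) = 0$, the image of $\phi \circ \iota \colon M \to B'$ is contained in $\ker \pi' = \im \iota'$. Because $\iota'$ is a monomorphism onto its image, there is a unique morphism $\phi|_M \colon M \to M'$ with $\iota' \circ \phi|_M = \phi \circ \iota$; this is exactly commutativity of the left square.

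Next I would construct the right vertical map $\bar\phi$. Since the top row is exact, $\im \iota = \ker \pi$, and $\pi$ exhibits $N$ as the cokernel of $\iota$. Reading the same hypothesis as $(\pi' \circ \phi) \circ \iota = 0$ says that $\pi' \circ \phi \colon B \to N'$ vanishes on $\im \iota$, so by the universal property of the cokernel it factors uniquely through $\pi$: there is a unique $\bar\phi \colon N \to N'$ with $\bar\phi \circ \pi = \pi' \circ \phi$, which is commutativity of the right square.

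Together these two squares constitute the asserted morphism of short exact sequences, and nothing beyond the two commutativities needs to be verified. There is no genuine obstacle here; the only point worth flagging is that the equation $\pi' \circ \phi \circ \iota = 0$ serves a double role — parsed as $\pi' \circ (\phi \circ \iota) = 0$ it forces $\phi \circ \iota$ to land inside $\ker \pi'$, while parsed as $(\pi' \circ \phi) \circ \iota = 0$ it makes $\pi' \circ \phi$ vanish on $\ker \pi$. Recognizing that this one condition yields both factorizations is the entire content of the lemma.
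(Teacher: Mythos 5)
Your proof is correct and follows essentially the same route as the paper's: the left square comes from the universal property of $\ker \pi' = \im \iota'$ applied to $\phi\circ\iota$, and the right square from the universal property of $\coker \iota$ applied to $\pi'\circ\phi$. Nothing is missing; this is exactly the paper's argument, with the dual roles of the single hypothesis made explicit.
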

\begin{proof}
Since $\pi'\circ\phi\circ\iota=0$ by assumption, the universal property of $\ker \pi'$ gives a factorization of $\phi\circ \iota$ through $M'$, inducing the commutative square at left. Furthermore, $\pi'\circ \phi$ vanishes on the image of $\iota$, so it factors through top $\pi$ by the universal property of the cokernel of $\iota$, inducing the commutative square at right.
\end{proof}

Recall that a finite-dimensional quiver representation $M$ is indecomposable if and only if its endomorphism ring is \emph{local}; two equivalent characterizations of this property in our setting are that every element of $\End(M)$ is either an isomorphism or nilpotent, and that the only idempotents of $\End(M)$ are 0 and 1.
See \cite[Ch.1\S1]{LWbook} for an exposition in sufficient generality (in particular, no hypotheses on $\kk$ or $Q$ are necessary).

\begin{lemma}\label{lem:indecomposable1} Let $M,N\in\rep(Q)$ be indecomposable and let
\begin{equation*}\sesB{M}{B}{N}{\iota}{\pi}\end{equation*} be a nonsplit short exact sequence.  If the induced map $\End(B)\to \Hom(M,N)$ sending $\phi \mapsto \pi\circ \phi\circ\iota$ is the zero map, then $B$ is indecomposable.
\end{lemma}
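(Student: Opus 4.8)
The plan is to use the local-endomorphism-ring characterization of indecomposability: I want to show that $\End(B)$ has no nontrivial idempotents, i.e. that any idempotent $\phi \in \End(B)$ is either $0$ or $1$. The main tool is Lemma \ref{lem:sesmorphism} applied to the \emph{single} short exact sequence $\sesB{M}{B}{N}{\iota}{\pi}$ in place of both rows (so $M'=M$, $N'=N$, $B'=B$). The hypothesis that $\phi \mapsto \pi\circ\phi\circ\iota$ is the zero map is exactly the condition needed to invoke that lemma, so every $\phi\in\End(B)$ induces a commutative morphism of short exact sequences with vertical maps $\phi|_M \in \End(M)$, $\phi \in \End(B)$, and $\bar\phi \in \End(N)$.

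First I would fix an idempotent $\phi\in\End(B)$, so $\phi^2=\phi$. Passing to the induced endomorphisms via Lemma \ref{lem:sesmorphism}, functoriality gives that $\phi|_M$ is an idempotent in $\End(M)$ and $\bar\phi$ is an idempotent in $\End(N)$. Since $M$ and $N$ are indecomposable, their endomorphism rings are local, so each of $\phi|_M$ and $\bar\phi$ is either $0$ or $1$. This leaves four cases to analyze according to the values $(\phi|_M,\bar\phi)\in\{0,1\}^2$.

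Next I would dispatch the cases. If $(\phi|_M,\bar\phi)=(1,1)$, then a short diagram chase (or the five-lemma) forces $\phi=1$; if $(0,0)$, the same reasoning forces $\phi=0$. The two mixed cases are where the nonsplit hypothesis enters. Suppose $(\phi|_M,\bar\phi)=(1,0)$: then $\bar\phi=0$ means $\pi\circ\phi=\bar\phi\circ\pi=0$, so $\phi$ factors through $\iota$ as $\phi=\iota\circ\psi$ for some $\psi\colon B\to M$; composing with $\iota$ and using $\phi|_M=1$ gives $\psi\circ\iota=\id_M$, so the sequence splits, a contradiction. Symmetrically, $(\phi|_M,\bar\phi)=(0,1)$ produces a retraction $N\to B$ of $\pi$ and again contradicts nonsplitness. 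Hence only $\phi\in\{0,1\}$ survive, and $B$ is indecomposable.

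The main obstacle is handling the two mixed cases cleanly: one must extract an actual splitting of the sequence from the vanishing of one induced map together with the identity on the other piece, which requires careful use of the universal properties (exactly as in Lemma \ref{lem:sesmorphism}) rather than a naive diagram chase. A subtlety to keep in mind is that one should not assume $\End(B)$ is already known to be local — the argument must run purely through idempotents, since the other local characterization (every endomorphism invertible or nilpotent) is harder to verify directly here.
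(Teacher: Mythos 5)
Your proof is correct and takes essentially the same route as the paper's: both apply Lemma \ref{lem:sesmorphism} to every $\phi\in\End(B)$, use that $\phi\mapsto\phi|_M$ and $\phi\mapsto\bar\phi$ preserve idempotents, reduce $\phi|_M$ and $\bar\phi$ to $0$ or $1$ via locality of $\End(M)$ and $\End(N)$, and rule out the mixed cases by extracting a splitting of the sequence. The only difference is organizational: the paper first proves that every element of the kernel of $\Psi\colon\phi\mapsto\phi|_M$ is nilpotent and uses this to dispose of the $\phi|_M=0$ cases, whereas you treat the four idempotent cases $(\phi|_M,\bar\phi)\in\{0,1\}^2$ directly, with the same underlying arguments.
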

\begin{proof}
We will show that $\End(B)$ is local.  
By Lemma \ref{lem:sesmorphism}, every $\phi \in \End(B)$ induces a morphism of short exact sequences; in particular, we have a $\kk$-algebra homomorphism $\Psi:\End(B)\to\End(M)$ where $\Psi(\phi) = \phi|_M$.  
We show that every element of $\ker\Psi$ is nilpotent.  If  $\phi\in\ker\Psi$, we obtain a commutative diagram
\begin{equation}\xymatrix{0\ar[r]&M\ar[d]^0\ar^{\iota}[r]&B\ar^{\pi}[r]\ar^{\phi}[d]&N\ar_{\tau}[ld]\ar^{\bar{\phi}}[d]\ar[r]&0\\0\ar[r]&M\ar^{\iota}[r]&B\ar^{\pi}[r]&N\ar[r]&0}\end{equation}
where the factorization $\phi=\tau\circ\pi$ arises from $\phi\circ \iota = 0$ by the universal property of $\coker \iota$. This universal property also gives that $\bar\phi$ is unique with $\bar\phi\circ\pi=\pi\circ\phi$. It follows that $\phi=\tau\circ\pi$ already gives $\bar{\phi}=\pi\circ \tau$ in the ring $\End(N)$, which is local since $N$ is indecomposable.  If $\bar{\phi}$ were a unit, the sequence would split as $\pi\circ(\tau\circ\bar{\phi}^{-1})=\mathrm{id}_N$ in this case, contradicting our assumption.  Therefore, $\bar{\phi}$ is nilpotent and there exists a positive integer $n$ such that $\bar\phi^n=0$. Then 
\begin{equation}\phi^{n+1}=(\tau\circ \pi)^{n+1}=\tau\circ\bar{\phi}^n\circ\pi=0,\end{equation}
showing that $\phi$ is nilpotent as well. Thus every element in $\ker\Psi$ is nilpotent.

Now take an arbitrary idempotent $e \in \End(B)$ and the associated morphism of short exact sequences.
\begin{equation}
  \begin{tikzcd}%
        0 \arrow[r] & M \arrow[r, "\iota"] \arrow[d, "e|_M"] & B \arrow[r, "\pi"]  \arrow[d, "e"] & N \arrow[r] \arrow[d, "\bar{e}"]  & 0 \\  
        0 \arrow[r] & M \arrow[r, "\iota "]  & B \arrow[r, "\pi "]  & N \arrow[r]  & 0 
    \end{tikzcd}%
\end{equation}
Since $\Psi$ is an algebra homomorphism, it preserves idempotents. Thus since $\End(M)$ is local, we have $e|_M=0$ or $e|_M=1$.  If $e|_M=\Psi(e)=0$, then $e$ is idempotent and nilpotent by the previous paragraph, so $e=0$.  So we can assume $e|_M=1$.  Similarly, either $\bar{e}=0$ or $\bar{e}=1$. If $\bar{e}=0$, then $e$ would be an idempotent of $\End(B)$ with image $M$, thus $M$ would be a direct summand of $B$, contradicting our assumption that the sequence does not split.  So we can assume $\bar{e}=1$.  But then $e$ is an invertible idempotent, so $e=1$.
We have shown that the only idempotents of $\End(B)$ are 0 and 1, so $\End(B)$ is local.
\end{proof}

\subsection{Families of indecomposable extensions}
Below, we will frequently use that for an extension $B$ of a representation $N$ by $M$, a vector space decomposition $B_q = M_q \oplus N_q$ at each $q \in Q_0$ induces a decomposition
\begin{equation}\label{eq:RBB}
R(B,B) \cong R(N,N) \oplus R(N,M) \oplus R(M,N) \oplus R(M,M).
\end{equation}
With this, we can naturally associate to any  $f\in R(X,Y)$ and $X,Y\in\{M,N\}$ a self-extension $\pi_{B,B}(\iota_{X,Y}(f))$ of $B$, where $\iota_{X,Y}:R(X,Y)\to R(B,B)$ is the natural embedding.

\begin{notation} For the remainder of the section, including the statements of the theorems, we fix the following notation associated to fixed $M, N \in \rep(Q)$:
\begin{itemize}
\item a nonzero $e \in R(N,M)$ determining an extension $\ses{M}{B}{N}$;
\item for each pair $X,Y\in\{M,N\}$,  a subset $U_{X,Y}\subset R(X,Y)$, writing $U_X:=U_{X,X}$ for short.
\end{itemize}
\end{notation}

Recall from Section \ref{sec:paramreps} that every triple $(\tau,\lambda,\mu)\in U_{N,M}\times U_{M}\times U_{N}$ gives rise to a short exact sequence
\begin{equation}\label{eq:taulambdamuseq}\
\quad \ses{M(\lambda)}{B(e+\tau,\lambda,\mu)}{N(\mu)}.
\end{equation}
The sequence is nonsplit if and only if $\pi_{\mu,\lambda}(e+\tau) \neq 0$.

As in Lemma \ref{lem:sesmorphism}, for all pairs $(\tau,\lambda,\mu),(\tau',\lambda',\mu')\in U_{N,M}\times U_{M}\times U_{N}$, we have a linear map
\begin{equation}\Theta_{e+\tau,\lambda,\mu}^{e+\tau',\lambda',\mu'}\colon\Hom(B(e+\tau,\lambda,\mu),B(e+\tau',\lambda',\mu'))\to\Hom(M(\lambda),N(\mu'))\end{equation}
given by precomposition with the inclusion $M(\lambda) \into B(e+\tau,\lambda,\mu)$ followed by postcomposition with the surjection $B(e+\tau',\lambda',\mu')\onto N(\mu')$.  

\begin{remark}
While the condition that $\Theta_{e+\tau,\lambda,\mu}^{e+\tau',\lambda',\mu'}=0$ below looks somewhat technical, it is often easy to verify in practice.  For example, it obviously holds if $\Hom(M(\lambda), N(\mu'))=0$, which happens for all $(\lambda, \mu')\in U_M\times U_N$ when $M$ and $N$ have disjoint support.
\end{remark}

In the theorems below, we use the following notation.  If $V$ is a vector space, $v\in V$, and $V'\subset V$ a subset, we write $v+V'=\setst{v+v'}{v' \in V'}$. 
We now apply Lemmas \ref{lem:sesmorphism} and \ref{lem:indecomposable1} to build families of indecomposables with nice properties.

\begin{theorem}\label{thm:strong}
Assume that both  $U_M$ and $U_N$ are both strong, and that $W \subseteq U_{N,M} \times U_M \times U_N$ is a subset such that:
\begin{itemize}
\item $\pi_{\mu,\lambda}(e+\tau) \neq 0$ for all $(\tau, \lambda, \mu) \in W$;
\item $\Theta_{e+\tau,\lambda,\mu}^{e+\tau,\lambda,\mu}=0$ for all $(\tau,\lambda,\mu) \in W$.
\end{itemize}
Then under the identification \eqref{eq:RBB}, the subset $(e,0,0)+W \subset R(B,B)$ is strong.
\end{theorem}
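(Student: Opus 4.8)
The plan is to reduce the statement to a fibrewise application of Lemma \ref{lem:indecomposable1}: the genuine homological content is already contained in that lemma, and the theorem merely packages it over the family parametrized by $W$.

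\textbf{Step 1: identify the points of $(e,0,0)+W$ with the representations $B(e+\tau,\lambda,\mu)$.} Under the identification \eqref{eq:RBB}, a triple $(\tau,\lambda,\mu)\in W\subseteq U_{N,M}\times U_M\times U_N$ is translated by $(e,0,0)$ to the element sitting as $e+\tau$ in the $R(N,M)$-summand, $\lambda$ in the $R(M,M)$-summand, $\mu$ in the $R(N,N)$-summand, and $0$ in the $R(M,N)$-summand. Reading off the block form of the deformed maps $B_a+\nu_a$, I would check that the associated deformation of $B$ is exactly
\[
B(e+\tau,\lambda,\mu)_a=\begin{pmatrix}M(\lambda)_a & (e+\tau)_a\\ 0 & N(\mu)_a\end{pmatrix},
\]
the representation defined in Section \ref{sec:paramreps}. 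Hence $(e,0,0)+W$ being strong is precisely the assertion that $B(e+\tau,\lambda,\mu)$ is indecomposable for every $(\tau,\lambda,\mu)\in W$.

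\textbf{Step 2: verify the hypotheses of Lemma \ref{lem:indecomposable1} for each fixed $(\tau,\lambda,\mu)\in W$.} Consider the short exact sequence \eqref{eq:taulambdamuseq}, namely $\ses{M(\lambda)}{B(e+\tau,\lambda,\mu)}{N(\mu)}$, and apply the lemma with $M'=M(\lambda)$, $N'=N(\mu)$, $B'=B(e+\tau,\lambda,\mu)$. First, $M(\lambda)$ and $N(\mu)$ are indecomposable because $\lambda\in U_M$, $\mu\in U_N$ and both $U_M$, $U_N$ are strong by assumption. Second, the sequence is nonsplit, since nonsplitness is equivalent to $\pi_{\mu,\lambda}(e+\tau)\neq 0$, which is the first bullet condition on $W$. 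Third, the induced map $\End(B(e+\tau,\lambda,\mu))\to\Hom(M(\lambda),N(\mu))$, $\phi\mapsto\pi\circ\phi\circ\iota$, must be zero; this is the remaining hypothesis to check.

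\textbf{Step 3: recognize the third hypothesis as the diagonal case of $\Theta$.} The key observation is that the map in the previous sentence is literally $\Theta_{e+\tau,\lambda,\mu}^{e+\tau,\lambda,\mu}$, the diagonal specialization $(\tau',\lambda',\mu')=(\tau,\lambda,\mu)$ of the map defined just before the theorem (precomposition with the inclusion $M(\lambda)\into B(e+\tau,\lambda,\mu)$ followed by postcomposition with the surjection $B(e+\tau,\lambda,\mu)\onto N(\mu)$). The second bullet condition on $W$ says exactly that this vanishes. With all three hypotheses confirmed, Lemma \ref{lem:indecomposable1} yields that $B(e+\tau,\lambda,\mu)$ is indecomposable, and since $(\tau,\lambda,\mu)\in W$ was arbitrary, $(e,0,0)+W$ is strong.

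The argument is in essence a matching of notation, so I do not expect a substantive obstacle. The only points requiring care — the ``main obstacle'' such as it is — are confirming that the block-matrix identification \eqref{eq:RBB} sends $(e,0,0)+(\tau,\lambda,\mu)$ to $B(e+\tau,\lambda,\mu)$, and that the diagonal $\Theta_{e+\tau,\lambda,\mu}^{e+\tau,\lambda,\mu}$ coincides verbatim with the map $\phi\mapsto\pi\circ\phi\circ\iota$ appearing in Lemma \ref{lem:indecomposable1}; both are routine once the conventions are pinned down.
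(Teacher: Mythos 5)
Your proposal is correct and follows essentially the same route as the paper: both proofs read off the short exact sequence $\ses{M(\lambda)}{B(e+\tau,\lambda,\mu)}{N(\mu)}$ from each $(\tau,\lambda,\mu)\in W$, use strength of $U_M$ and $U_N$ for indecomposability of the outer terms, nonvanishing of $\pi_{\mu,\lambda}(e+\tau)$ for nonsplitness, and the vanishing of the diagonal $\Theta$ map to invoke Lemma \ref{lem:indecomposable1}. Your Steps 1 and 3, making explicit the block-matrix identification and the matching of $\Theta_{e+\tau,\lambda,\mu}^{e+\tau,\lambda,\mu}$ with the map in the lemma, are left implicit in the paper but are the same bookkeeping.
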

\begin{proof}
Each element $(\tau,\lambda,\mu)\in U_{N,M}\times U_M\times U_N$ determines a short exact sequence
\begin{equation}\label{eq:strongseq}
\pi_{\mu,\lambda}(e+\tau):\ses{M(\lambda)}{B(e+\tau,\lambda,\mu)}{N(\mu)}.
\end{equation}
Since $U_M$ and $U_N$ are each assumed to be strong, both $M(\lambda)$ and $N(\mu)$ are indecomposable.
As each $\pi_{\mu,\lambda}(e+\tau) \neq 0$, each sequence \eqref{eq:strongseq} does not split. 
From the assumption that  $\Theta_{e+\tau,\lambda,\mu}^{e+\tau,\lambda,\mu}=0$ for all $(\tau,\lambda,\mu) \in W$, Lemma \ref{lem:indecomposable1} implies that each $B(e+\tau,\lambda,\mu)$ is indecomposable and thus $(e,0,0)+W$ is strong.
\end{proof}


\begin{theorem}\label{thm:separating}
Assume the following:
\begin{itemize}
\item $U_M$ and $U_N$ are both separating;
\item $\Theta_{e+\tau,\lambda,\mu}^{e+\tau',\lambda',\mu'}=0$ for all $(\tau,\lambda,\mu), (\tau',\lambda',\mu') \in U_{N,M} \times U_M \times U_N$.
\end{itemize}  
Then we have:

\begin{enumerate}[(a)]
\item Under the identification \eqref{eq:RBB}, the set $\{e\}\times U_M \times U_N \subset R(B,B)$ is always separating.
\item If furthermore $\End(M(\lambda)) = \End(N(\mu))=\kk$, the element $e$ is not in the subspace of $R(N,M)$ generated by $U_{N,M}$, and the set $e+U_{N,M}$ is universal for $(U_M, U_N)$, then $(e+U_{N,M}) \times U_M \times U_N \subset R(B,B)$ is separating.
\end{enumerate}
\end{theorem}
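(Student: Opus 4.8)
The plan is to analyze an arbitrary isomorphism $\phi\colon B(e+\tau,\lambda,\mu)\to B(e+\tau',\lambda',\mu')$ between two of the middle terms and show it forces $(\tau,\lambda,\mu)=(\tau',\lambda',\mu')$. First I would invoke the standing hypothesis $\Theta_{e+\tau,\lambda,\mu}^{e+\tau',\lambda',\mu'}=0$: by Lemma \ref{lem:sesmorphism} it makes $\phi$ a morphism of the two defining short exact sequences. Concretely, with respect to the vertexwise decompositions $B_q=M_q\oplus N_q$, the map $\phi$ is block upper triangular (its lower-left $M\to N$ blocks vanish), restricting to $\phi|_M\colon M(\lambda)\to M(\lambda')$ on the submodule and descending to $\bar\phi\colon N(\mu)\to N(\mu')$ on the quotient. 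Since $\phi$ and $\iota$ are injective, $\iota'\circ\phi|_M=\phi\circ\iota$ is injective, so $\phi|_M$ is injective; as $M(\lambda),M(\lambda')$ share the dimension vector of $M$, it is an isomorphism, and dually so is $\bar\phi$.

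For part (a) the first coordinate is fixed equal to $e$, so this already suffices: $\phi|_M$ and $\bar\phi$ being isomorphisms gives $M(\lambda)\cong M(\lambda')$ and $N(\mu)\cong N(\mu')$, and since $U_M$ and $U_N$ are separating we conclude $\lambda=\lambda'$ and $\mu=\mu'$. Hence distinct pairs $(\lambda,\mu)$ yield non-isomorphic $B(e,\lambda,\mu)$.

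For part (b) I would first run the same argument to reduce to $\lambda=\lambda'$, $\mu=\mu'$, and then pin down $\tau=\tau'$. Now $\End(M(\lambda))=\End(N(\mu))=\kk$ forces $\phi|_M=c\cdot\id$ and $\bar\phi=d\cdot\id$ for scalars $c,d\in\kk^\ast$. Writing $g$ for the upper-right block of $\phi$ (an element of $\bigoplus_q\Hom(N_q,M_q)$, the domain of $d_{N(\mu),M(\lambda)}$) and expanding the commutation relation $\phi_{t(a)}B(e+\tau,\lambda,\mu)_a=B(e+\tau',\lambda,\mu)_a\phi_{s(a)}$ blockwise, the diagonal entries are automatic while the off-diagonal entry gives $c(e+\tau)-d(e+\tau')=-d_{N(\mu),M(\lambda)}(g)$. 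Thus $c(e+\tau)-d(e+\tau')\in\im d_{N(\mu),M(\lambda)}=\ker\pi_{\mu,\lambda}$, which upon applying $\pi_{\mu,\lambda}$ reads $c\,\pi_{\mu,\lambda}(e+\tau)=d\,\pi_{\mu,\lambda}(e+\tau')$; that is, the two extension classes agree up to the scalar $c/d$ coming from the scalar automorphisms of the end terms.

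The hard part is then to show $c=d$, after which universality of $e+U_{N,M}$ (injectivity of $\pi_{\mu,\lambda}$ on $e+U_{N,M}$) immediately yields $\pi_{\mu,\lambda}(e+\tau)=\pi_{\mu,\lambda}(e+\tau')$, hence $\tau=\tau'$. To rule out $c\neq d$ I would rearrange $c\,\pi_{\mu,\lambda}(e+\tau)=d\,\pi_{\mu,\lambda}(e+\tau')$ into $\pi_{\mu,\lambda}(e)=\pi_{\mu,\lambda}(v)$ with $v=\tfrac{d\tau'-c\tau}{c-d}\in\langle U_{N,M}\rangle$, and then leverage the hypothesis $e\notin\langle U_{N,M}\rangle$ to reach a contradiction. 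This step is clean precisely when $\pi_{\mu,\lambda}$ is injective on the subspace $\langle e\rangle+\langle U_{N,M}\rangle$ — for instance when $\im d_{N(\mu),M(\lambda)}=0$, as in the disjoint-support situation highlighted after Definition \ref{def:universal}, where $\pi_{\mu,\lambda}$ is the identity — since then $\pi_{\mu,\lambda}(e)=\pi_{\mu,\lambda}(v)$ forces $e=v\in\langle U_{N,M}\rangle$, contradicting the hypothesis. Thus the crux is verifying that the universality assumption genuinely supplies this injectivity past $\langle U_{N,M}\rangle$, together with handling the degenerate case $\pi_{\mu,\lambda}(e+\tau)=0$ separately (there $d\neq 0$ forces $\pi_{\mu,\lambda}(e+\tau')=0$ as well, and injectivity on $e+U_{N,M}$ again gives $\tau=\tau'$).
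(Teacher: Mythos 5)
Your part (a) is complete and is exactly the paper's argument: the vanishing of the $\Theta$-maps lets Lemma \ref{lem:sesmorphism} turn any isomorphism $\phi$ into a morphism of the two short exact sequences, $\phi|_{M(\lambda)}$ and $\bar\phi$ are isomorphisms by dimension count, and the separating hypotheses on $U_M$, $U_N$ give $\lambda=\lambda'$, $\mu=\mu'$. Your part (b) also coincides with the paper's proof through the decisive computation: triviality of the endomorphism rings gives $\phi_{1,1}=c\cdot\id$, $\phi_{2,2}=d\cdot\id$ with $c,d\in\kk^\ast$, and the blockwise commutation relation gives $d(e+\tau')-c(e+\tau)=d_{N(\mu),M(\lambda)}(\phi_{1,2})\in\ker\pi_{\mu,\lambda}$.

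Where you stop is the endgame, so judged as a proof of the stated theorem your proposal has a gap: you never derive $c=d$, flagging it as the unresolved ``crux'' and settling it only when $\im d_{N(\mu),M(\lambda)}=0$. The paper finishes here by asserting that universality (injectivity of $\pi_{\mu,\lambda}$ on $e+U_{N,M}$) forces $(d-c)e+d\tau'\cdot d-\tau c=0$ in $R(N,M)$, after which $e\notin\langle U_{N,M}\rangle$ gives $c=d$ and $\tau=\tau'$. But your hesitation is well founded: $(d-c)e+d\tau'-c\tau$ is a linear combination of elements of $e+U_{N,M}$, not an element of it, and injectivity of $\pi_{\mu,\lambda}$ on an affine subset does not give injectivity on the subspace $\langle e\rangle+\langle U_{N,M}\rangle$, which is what this step really uses. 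The hypotheses as stated are in fact insufficient. For example, on $K(3)$ with arrows $a,b,c$ take the Schurian $(1,1)$-dimensional representations $M$, $N$ with $(M_a,M_b,M_c)=(0,1,0)$ and $(N_a,N_b,N_c)=(1,0,0)$; then $\Hom(M,N)=\Hom(N,M)=0$, and in the coordinates $(x_a,x_b,x_c)$ on $R(N,M)\cong\kk^3$ one has $\im d_{N,M}=\kk\times\kk\times 0$ and $\Ext(N,M)\cong\kk$. Put $U_M=U_N=\{0\}$, $f=(0,0,1)$, $U_{N,M}=\{0,f\}$, $e=(1,0,-2)$: every hypothesis of (b) holds ($e\notin\langle f\rangle$, and $\pi_{N,M}(e)=-2\neq-1=\pi_{N,M}(e+f)$), yet with $c=1$, $d=2$, $\phi_{1,2}=(0,1)$ the maps
\begin{equation*}
\phi_0=\begin{pmatrix}1&0\\0&2\end{pmatrix},\qquad
\phi_1=\begin{pmatrix}1&1\\0&2\end{pmatrix}
\end{equation*}
define an isomorphism $B(e)\cong B(e+f)$, since $d(e+f)-ce=(1,0,0)\in\im d_{N,M}$. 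So the missing step is not an oversight of yours that the paper repairs; it is a point where the stated hypothesis must be strengthened, namely to: $\pi_{\mu,\lambda}$ is injective on the subspace $\langle e\rangle+\langle U_{N,M}\rangle$ for all $(\lambda,\mu)\in U_M\times U_N$ (equivalently $(\langle e\rangle+\langle U_{N,M}\rangle)\cap\im d_{N(\mu),M(\lambda)}=0$). This stronger condition does hold in all of the paper's applications — disjoint supports give $\im d_{N(\mu),M(\lambda)}=0$, and universal tree-shaped bases have linearly independent images in $\Ext$ — and under it your own argument closes immediately: $c\neq d$ would place $e$ in $\langle U_{N,M}\rangle+\ker\pi_{\mu,\lambda}$, contradicting injectivity on the span, while your separate ``degenerate case'' $\pi_{\mu,\lambda}(e+\tau)=0$ then needs no special treatment.
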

\begin{proof}
Given $(\tau, \lambda, \mu), (\tau', \lambda', \mu') \in U_{N,M} \times U_M \times U_N$, Lemma \ref{lem:sesmorphism} shows that any isomorphism $\phi\colon B(e+\tau,\lambda,\mu)\xto{\sim} B(e+\tau',\lambda',\mu')$ induces the morphism of exact sequences below.

\begin{equation}\xymatrix@R24pt@C30pt{0\ar[r]&M(\lambda)\ar^{\phi|_{M(\lambda)}}[d]\ar^{\iota\quad}[r]&B(e+\tau,\lambda,\mu)\ar^{\quad\pi}[r]\ar^{\phi}[d]&N(\mu)\ar^{\bar{\phi}}[d]\ar[r]&0\\
0\ar[r]&M(\lambda')\ar^{\iota'\quad}[r]&B(e+\tau',\lambda',\mu')\ar^{\quad\pi'}[r]&N(\mu')\ar[r]&0}\end{equation}
By dimension count we have that both $\phi|_{M(\lambda)}$ and $\bar{\phi}$ are isomorphisms, which yields $\lambda=\lambda'$ and $\mu=\mu'$ because $U_M$ and $U_N$ are assumed to be separating.  Taking $\tau=0$, this proves (a).  

For part (b), it remains to show that the additional assumptions imply $\tau=\tau'$. Consider the structure of the morphism $\phi$ with respect to the vector space decomposition $B= M\oplus N$, i.e.
\begin{equation}\phi=\begin{pmatrix}\phi_{1,1}&\phi_{1,2}\\\phi_{2,1}&\phi_{2,2}\end{pmatrix}.\end{equation}
The commutativity of the diagram yields $\phi_{2,1}=0$. Together with our assumption on triviality of endomorphism rings this gives $\phi_{1,1}=\phi|_{M(\lambda)}=c\cdot \mathrm{id}_{M(\lambda)}$ and $\phi_{2,2}=\bar{\phi}=d\cdot \mathrm{id}_{N(\mu)}$ for certain $c,d\in\kk^\ast$. 

In turn, $\phi_{t(a)}\circ B(e+\tau,\lambda,\mu)_a=B(e+\tau',\lambda',\mu')_a\circ \phi_{s(a)}$ for 
\begin{equation}
B(e+\tau,\lambda,\mu)_a=\begin{pmatrix}M(\lambda)_a&e_a+\tau_a\\0&N(\mu)_a\end{pmatrix}
 \quad \text{and} \quad
B(e+\tau',\lambda',\mu')_a=\begin{pmatrix}M(\lambda')_a&e_a+\tau'_a\\0&N(\mu')_a\end{pmatrix},
\end{equation}
gives \begin{equation}(e_a+\tau'_a)\cdot d+M(\lambda')_a\circ (\phi_{1,2})_{s(a)} =c\cdot \left(e_a+\tau_a \right)+(\phi_{1,2})_{t(a)}\circ N(\mu)_a\end{equation} 
for all $a\in Q_1$ and thus
\begin{equation}d_{N(\mu),M(\lambda')}\left((\phi_{1,2})_a\right)=\left(e_a(d-c)+\tau'_a\cdot d-\tau_ac\right)_a.\end{equation}
Now recall that the image of $d_{N(\mu),M(\lambda')}$ is exactly the kernel of $\pi_{\mu, \lambda'}$ defined in \eqref{eq:pilm}, so we have $\pi_{\mu,\lambda'}(\left(e_a(d-c)+\tau'_a\cdot d-\tau_ac\right)_a) = 0$.  On the other hand, we assumed that $e+U_{N,M}$ represents a subset of $\Ext(N(\mu),M(\lambda'))$, meaning that $\pi_{\mu,\lambda'}$ is injective on $e+U_{N,M}$, so we find that $\left(e_a(d-c)+\tau'_a\cdot d-\tau_ac\right)_a = 0$ in $R(N,M)$.  But we also assumed that $e$ is not in the subspace of $R(N,M)$ generated by $U_{N,M}$, so we must have $d=c$ and $\tau=\tau'$.
\end{proof}

\begin{remark} 
The assumption on $M(\lambda)$ and $N(\mu)$ to have trivial endomorphism ring is necessary in (b) because otherwise there are cases where two middle terms $B(\tau,\lambda,\mu)$ and $B(\tau',\lambda,\mu)$ for $\tau\neq\tau'$ are isomorphic. 
\end{remark}

Relevant to the conjecture of Kac discussed in the introduction is the following definition, where we have the deformation parameter varying over a linear subspace of $R(M,M)$. 

\subsection{Affine cells of indecomposable representations}
Recall that a representation $M$ is called \emph{Schurian} if $\End(M)=\kk$.

\begin{definition} We call a pair $(M,U)$ consisting of a representation $M$ of $Q$ and a subspace $U\subseteq R(M,M)$, which represents a strong and separating subset of $\Ext(M,M)$, a \emph{cell of indecomposable representations}. If all representations $M(\lambda)$ for $\lambda\in U$ are Schurian, we call $(M,U)$ a \emph{cell of Schurian representations}.

Let $M_1,\ldots, M_n\in\rep(Q)$ with the same dimension vector and fix $U_i\subset R(M_i,M_i)$ for $i=1,\ldots,n$. If each $(M_i,U_i)$ is a cell of indecomposable representations such that $M_i(\lambda_i)\cong M_j(\lambda_j)$ implies $i=j$, we call the collection $\{(M_i,U_i)\mid i=1,\ldots,n\}$ a \emph{mosaic of indecomposable representations}.
\end{definition}
Note that in a mosaic of indecomposable representations, $M_i(\lambda_i)\cong M_i(\lambda_j)$ already implies $\lambda_i=\lambda_j$ as each $U_i$ is separating.

\begin{example}\label{ex:grassmannian}
Choosing a basis, a representation of dimension $(1,d)$ of $K(n)$ is given by a matrix $A\in\kk^{d\times n}$. Moreover, $A$ is indecomposable if and only if $\rank(A)=d$, and $A\cong A'$ if and only if there exists a $g\in\Gl_d$ such that $gA=A'$. This shows that the isomorphism classes of indecomposable representations are in one-to-one correspondence with points of the usual Grassmannian $\Gr_d(\kk^n)$. So the Schubert decomposition induces a mosaic of indecomposable representations for $(1,d)$.

More precisely, recall that for every sequence $I=(i_1,\ldots,i_d)$ with $1\leq i_1<\ldots<i_d\leq n$ there exists a Schubert cell
\begin{equation}A_I:=\begin{pmatrix}
    \ast&\cdots &\ast &1&0 &\cdots& 0 & 0 & 0 &\cdots&0&0&0&\cdots&0\\
    \ast&\cdots &\ast&0&\ast&\cdots &\ast&1&0&\cdots&0&0&0&\cdots&0\\
    \ast &\cdots&\ast&0&\ast &\cdots&\ast&0&\ast&\cdots&0&0&0&\cdots&0\\[-0.4em]
    \vdots &\ddots&\vdots&\vdots&\vdots &\ddots&\vdots&\vdots&\vdots&\ddots&\vdots&\vdots&\vdots&\ddots&\vdots\\
    \ast&\cdots &\ast&0&\ast&\cdots &\ast&0&\ast&\cdots&0&0&0&\cdots&0\\
    \ast&\cdots &\ast&0&\ast&\cdots &\ast&0&\ast&\cdots&\ast&1&0&\cdots&0
  \end{pmatrix}\subset \kk^{d\times n}\end{equation}
where the unit vectors are in the columns $i_1,\ldots,i_d$. Then we have \begin{equation}\Gr_d(\kk^n)\cong\coprod_{\substack{I\subset\{1,\ldots,n\},\\|I|=d}}A_I.\end{equation}

Then $A_I$ defines an affine space of Schurian representations of dimension $(1,d)$. Furthermore, taking $B_I$ as the representation with $(B_I)_{a_{i_l}}=e_{i_l}$ and $(B_I)_{a_i}=0$ if $i\notin I$, every point of $A_I$ can be understood as a deformation of $B_I$, giving a cell of Schurian representations.

Note that the Schubert decomposition can also be obtained with the geometric methods presented in Section \ref{sec:geometric}.
\end{example}

The following proposition uses the notation for generalized Kronecker quivers of Example \ref{ex:kronecker}.

\begin{proposition}\label{pro:KnPn}
Let $M,N$ be Schurian representations with $\Hom(M,N)=0$ and assume that $\{e_1,\ldots,e_n\}\subset R(N,M)$ represents a basis of $\Ext(N,M)$. Let $U_{N,M}=\langle e_1,\ldots,e_n\rangle$. For any $1\leq d\leq n$ we have a linear map 
\begin{equation*}
\Gamma\colon R_{(1,d)}(K(n))\to U_{N,M}\otimes \kk^d,\qquad  (v_{a_i})_{i=1}^n \mapsto \sum_{i=1}^n e_i\otimes v_{a_i}
\end{equation*}
which determines for each $A=(v_{a_i})_{i=1}^n \in R_{(1,d)}(K(n))$ an extension of $N$ by $M\otimes \kk^d \cong M^{d}$ with middle term denoted by $F(A)$.

Then $\Gamma(A),\Gamma(A')\in U_{N,M}\otimes \kk^d$ give isomorphic middle terms if and only if there exists a $g\in\Gl_d$ such that $A= gA'$, i.e. if $A\cong A'$ as representations of $K(n)$. Furthermore, $F(A)$ is indecomposable if and only if $\rank(A)=d$, i.e. if $A\in R_{(1,d)}(K(n))$ is indecomposable. 
The analogous statement also holds for the map $\Gamma':R_{(d,1)}(K(n))\to U_{N,M}\otimes \kk^d$ defined in the natural way, i.e. when considering extensions of $N^d$ by $M$.
\end{proposition}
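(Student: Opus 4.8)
The plan is to reduce both assertions to linear algebra over $K(n)$ by exploiting that every morphism between two middle terms is forced into triangular form. Write $M^d:=M\otimes\kk^d$, so that $F(A)$ sits in a short exact sequence $\ses{M^d}{F(A)}{N}$ whose class is $[\Gamma(A)]=\sum_{i=1}^n\bar e_i\otimes v_{a_i}$ under the canonical identification $\Ext(N,M^d)\cong\Ext(N,M)\otimes\kk^d$, where $\bar e_i:=\pi_{N,M}(e_i)$ form a basis of $\Ext(N,M)$. Since $\Hom(M^d,N)=\Hom(M,N)^{\oplus d}=0$ by hypothesis, Lemma \ref{lem:sesmorphism} applies to any $\phi\in\Hom(F(A),F(A'))$ and produces a morphism of short exact sequences, from which I extract the restriction $g:=\phi|_{M^d}\in\End(M^d)=\kk^{d\times d}$ (using that $M$ is Schurian) and the induced map $c:=\bar\phi\in\End(N)=\kk$ (using that $N$ is Schurian).

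\textbf{The isomorphism criterion.} The existence of this morphism of sequences with fixed outer maps is equivalent to $g_*[\Gamma(A)]=c^*[\Gamma(A')]$, which under the identification above reads $(\id\otimes g)[\Gamma(A)]=c\,[\Gamma(A')]$ in $\Ext(N,M)\otimes\kk^d$. Comparing coefficients against the basis $\{\bar e_i\}$ then yields $g\,v_{a_i}=c\,v'_{a_i}$ for all $i$, i.e. $gA=cA'$ as elements of $\kk^{d\times n}$. If $\phi$ is an isomorphism, then $c\in\kk^*$ (since $\pi'\circ\phi=c\cdot\pi$ must be surjective) and $g\in\Gl_d$ (since $\phi|_{M^d}$ is the restriction of an injective map to the finite-dimensional submodule $M^d$, hence bijective), so $A=(cg^{-1})A'$ with $cg^{-1}\in\Gl_d$. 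For the converse, given $A=hA'$ with $h\in\Gl_d$ I set $g=h^{-1}$ and $c=1$, note that $g_*[\Gamma(A)]=c^*[\Gamma(A')]$ holds by construction, and lift this compatibility to an actual morphism $\phi$ — either invoking the standard fact that such a lift exists precisely when the pushed and pulled classes agree, or explicitly by solving $d_{N,M^d}(\psi)=g\,\Gamma(A)-c\,\Gamma(A')$ for the off-diagonal block exactly as in the proof of Theorem \ref{thm:separating}; as $g$ and $c$ are invertible, $\phi$ is an isomorphism.

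\textbf{Indecomposability.} Here I cannot invoke Lemma \ref{lem:indecomposable1}, since $M^d$ is decomposable for $d>1$ and its hypothesis that both outer terms be indecomposable fails; instead I argue directly with $\End(F(A))$. If $\rank A<d$, then all columns $v_{a_i}$ lie in a proper subspace $V=\im A\subsetneq\kk^d$, so for a complement $C$ the class $[\Gamma(A)]$ lies in $\Ext(N,M)\otimes V$, forcing $M\otimes C$ to split off as a direct summand and making $F(A)$ decomposable. If instead $\rank A=d$, then applying the computation of the previous paragraph with $A'=A$ gives, for every $\phi\in\End(F(A))$, the relation $(g-c\cdot\id)v_{a_i}=0$ for all $i$; as the $v_{a_i}$ span $\kk^d$, this forces $g=c\cdot\id_d$. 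Consequently $\psi:=\phi-c\cdot\id_{F(A)}$ satisfies $\psi|_{M^d}=0$ and $\bar\psi=0$, hence factors as $F(A)\xrightarrow{\pi}N\to M^d\xrightarrow{\iota}F(A)$ and therefore squares to zero. Thus every endomorphism is a unit (when $c\neq 0$) or nilpotent (when $c=0$), so $\End(F(A))$ is local and $F(A)$ is indecomposable.

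\textbf{Closing and main obstacle.} The statement for $\Gamma'$ follows by the symmetric argument applied to extensions of $N^d$ by $M$ (equivalently, by passing to the opposite quiver, which exchanges the roles of sub and quotient). I expect the indecomposability direction to be the main obstacle, precisely because the earlier indecomposability lemma is unavailable for the reducible term $M^d$; the crux is the bookkeeping identifying $\End(M^d)$ with $\kk^{d\times d}$ so that the morphism-of-sequences data $(g,c)$ translates exactly into the defining linear algebra of $K(n)$-representations, after which the rank condition cleanly governs both the isomorphism classes and the local structure of the endomorphism ring.
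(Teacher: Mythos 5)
Your proof is correct, and its core reduction is the same as the paper's: use $\Hom(M\otimes\kk^d,N)=0$ and the Schurian hypotheses to force every morphism $F(A)\to F(A')$ into block-triangular form with diagonal blocks $\id_M\otimes g$ (where $g\in\kk^{d\times d}$) and $c\,\id_N$ (where $c\in\kk$), then translate the compatibility into $gA=cA'$, i.e.\ into a morphism of $K(n)$-representations. The two places where you diverge are in execution rather than strategy. First, where the paper derives $\mu\Gamma(A')=\Gamma(g\cdot A)$ by writing out the intertwining relation arrow by arrow, obtaining $d_{N,M\otimes\kk^d}(\phi_{1,2})=\mu\Gamma(A')-\Gamma(g\cdot A)$ and using that $\pi_{N,M\otimes\kk^d}$ is injective on $U_{N,M}\otimes\kk^d$, you invoke the standard push-pull criterion for existence of a morphism of extensions with prescribed outer maps; these are the same computation, yours phrased functorially at the level of $\Ext(N,M)\otimes\kk^d$, the paper's in coordinates (and your fallback of solving $d_{N,M^d}(\psi)=g\,\Gamma(A)-c\,\Gamma(A')$ is literally the paper's step). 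Second, for indecomposability the paper transfers idempotents between $\End(F(A))$ and $\End_{K(n)}(A)$, implicitly relying on the equivalence (from Example \ref{ex:grassmannian}) that $A$ is indecomposable iff $\rank A=d$, whereas you argue directly: for $\rank A<d$ you split off $M\otimes C$ explicitly, and for $\rank A=d$ you show every endomorphism equals $c\,\id+\psi$ with $\psi=\iota\circ\eta\circ\pi$ square-zero, so $\End(F(A))$ is local. Your version is self-contained on the $K(n)$ side and makes explicit why Lemma \ref{lem:indecomposable1} is unavailable (the outer term $M^d$ is decomposable), a point the paper leaves implicit; the paper's idempotent argument yields the symmetric statement that $A$ is indecomposable iff $F(A)$ is, without separating the two rank cases. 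Both routes are sound, and yours loses nothing.
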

\begin{proof}
Fix $A,A'\in\kk^{d\times n}$ and let $\phi \in\Hom_Q(B,B')$ with $B=F(A)$ and $B'=F(A')$. We write 
$\phi=\begin{pmatrix}\phi_{1,1}&\phi_{1,2}\\ \phi_{2,1}&\phi_{2,2}\end{pmatrix}$ with linear maps $\phi_{i,j}$ determined by the vector space decomposition $B=N \oplus M\otimes \kk^d$ and similarly for $B'$. 
Making $\phi_{t(a)}\circ B_a=B'_a\circ \phi_{s(a)}$ for every $a\in Q_1$ explicit, we obtain $\phi_{2,1}=0$ using $\Hom(M\otimes\kk^d,N)=0$, then $\phi_{2,2}=\mu\id_N$ for some $\mu\in\kk$ using $\End(N)=\kk$, and then $\phi_{1,1}=\id_M \otimes g \in\End(M\otimes\kk^d)\cong \End(M) \otimes \Hom_{\kk}(\kk^d, \kk^d)$ using $\End(M)=\kk$, where $g \in \Hom_{\kk}(\kk^d, \kk^d)$.

Using the natural isomorphism $U_{N,M\otimes\kk^d}\cong U_{N,M}\otimes\kk^d$, we furthermore obtain for every $a\in Q_1$ that
\begin{equation}(\phi_{1,1})_{t(a)}\circ \Gamma(A)_a +(\phi_{1,2})_{t(a)}\circ N_a=(M_a \otimes \id_{\kk^d})\circ (\phi_{1,2})_{s(a)}+\Gamma(A')_a \circ (\phi_{2,2})_{s(a)}\end{equation}

which yields the following after substitution, rearranging, and collecting terms over all $a \in Q_1$:
\begin{equation}d_{N,M\otimes\kk^d}(\phi_{1,2})=\mu \Gamma(A')- \Gamma(g\cdot A) \in U_{N,M\otimes\kk^d}.\end{equation}

 As $U_{N,M}$ represents a subspace of $\Ext(N,M)$, it follows that $U_{N,M}\otimes \kk^d$ represents a subspace of $\Ext(N,M\otimes\kk^d)$. Thus $\im d_{N,M\otimes\kk^d}=0$ in $\Ext(N,M\otimes \kk^d)\cong \Ext(N,M)\otimes \kk^d$ gives $\mu\Gamma(A')= \Gamma(g\cdot A) $.  

Since $\{e_1, \dotsc, e_n\}$ is linearly independent, writing $A=(v_{a_i})_{i=1}^n$ and $A'=(v'_{a_i})_{i=1}^n$, we get
\begin{equation}
\sum_{i=1}^n e_i\otimes \mu v'_{a_i} = \sum_{i=1}^n e_i\otimes g\, v_{a_i} \Rightarrow \mu v'_{a_i} = g\, v_{a_i} \qquad 1 \leq i \leq n,
\end{equation}
which is exactly the condition that $(\mu,g)\in\Hom_{K(n)}(A,A')$.  It is immediate from the triangular form of $\phi$ and descriptions of $\phi_{1,1}, \phi_{2,2}$ above that $\phi$ is an isomorphism if and only if $(\mu, g)$ is an isomorphism.
Since we started with an arbitrary $\phi\in\Hom_Q(B,B')$, we see that $B \cong B'$ implies $A \cong A'$.
On the other hand, every $(\mu,g)\in\Hom_{K(n)}(A,A')$ gives a morphism $\phi$ with $\phi_{1,2}=\phi_{2,1}=0$ in the above notation.  So $A \cong A'$ implies $B \cong B'$ as well.

Now we see that $A$ is indecomposable if and only if $B$ is indecomposable.
If $A$ is decomposable and $(g,\mu)$ a nontrivial idempotent, then $\phi$ as above with $\phi_{1,2}=0$ defines a nontrivial idempotent of $B$.
Conversely, let $\phi\in\End(B)$ as above an idempotent and $A$ be indecomposable. The triangular form of $\phi$ gives 
\begin{equation}g^2=g,\qquad \mu^2=\mu,\qquad (\id_M\otimes g)_q \circ (\phi_{1,2})_q+\mu(\phi_{1,2})_q=(\phi_{1,2})_q\end{equation}
for every $q\in Q_0$. Thus $(g,\mu)$ is an idempotent of $\End_{K(n)}(A)$ and it follows that $(g,\mu)=(0,0)$ or $(g,\mu)=(\id_{\kk^d},1)$. In the first case it follows that $\phi_{1,2}=0$ and thus $\phi=0$. In the second case, we get $\phi_{1,2}+\phi_{1,2}=\phi_{1,2}$ and thus again $\phi_{1,2}=0$ which gives $\phi=\id_B$. This shows that $\phi=0$ or $\phi=\id_{B}$.
\end{proof}

\begin{remark}Note that if $\Hom(N,M)\neq 0$, the representations $F(A)$ are not Schurian. This can be checked straightforwardly, but it is also revealed by the proof as it shows that $\phi_{1,2}\in\Hom_Q(N,M\otimes \kk^d)$ for $\phi\in\End(B)$. 

Actually, $F$ can be extended to a faithful functor $F:\rep(K(n))\to \rep(Q)$ which is full if $\Hom(N,M)=0$. In this case, it automatically reflects indecomposables and isomorphisms. For exceptional representations $M$ and $N$, this follows directly from Schofield induction \cite{Sch91}. Moreover, the results of \cite[Section 3.2]{Wei15} apply in the situation of Proposition \ref{pro:KnPn}. Since the result as stated has an easier proof, we included it -  also to make the paper self-contained.

\end{remark}

\begin{remark}\label{rmk:grasscells}
 Now Proposition \ref{pro:KnPn} gives a parametrization of isomorphism classes of extensions of certain Schurian representations by the variety $\Gr_d(\kk^n)$. In particular, the Schubert decomposition into affine spaces  can be carried over to construct a mosaic of indecomposables of $Q$ of dimension $\udim N +d\cdot\udim M.$ 

More explicitly, let $\{e_1,\ldots,e_{n}\}\subset U_{N,M}$ represent a basis of $\Ext(N,M)$. Each of the ${n\choose d}$ strictly increasing sequences $I=(i_1,\ldots,i_d)$ gives an indecomposable representation $B_I$ as the middle term of the exact sequence $\pi_{N,M\otimes \kk^d}(\sum_{j=1}^d e_{i_j}\otimes f_j)$, where $(f_1, \dotsc, f_d)$ is the standard basis of $\kk^d$. The corresponding cell of indecomposables is then given by
\begin{equation}C_I=\left(B_I, \left\{\sum_{j=1}^d \sum_{k=1}^{i_j-1}a_{j,k}e_k \otimes f_j \mid a_{j,k}\in\kk,\,a_{l,i_k}=0\text{ for }k,l=1,\ldots,d\right\}\right).\end{equation}
Thus we obtain a natural embedding $\iota_I\colon A_I\hookrightarrow U_{N,M\otimes \kk^d}\hookrightarrow R(B_I,B_I)$ in such a way that $A_I$ represents a strong and separating subspace of $\Ext(B_I,B_I)$.

Furthermore, the collection of cells $\{C_I \mid I=(i_1,\ldots,i_d),\,1\leq i_1<\ldots<i_d\leq n\}$ is a mosaic of indecomposables.

\end{remark}


The preceding remark can be generalized when allowing the outer terms of the extension to vary within cells of indecomposables. If $M\in\rep(Q)$, we can identify $R(M\otimes \kk^d,M\otimes \kk^d) \cong R(M,M) \otimes \Hom(\kk^d, \kk^d)$,
and if $U_M\subset R(M,M)$ represents a subspace of $\Ext(M,M)$, then for $\lambda\in U_M$ we have $(M\otimes \kk^d)(\lambda \otimes \id_{\kk^d})=M(\lambda)\otimes \kk^d.$

\begin{theorem}\label{thm:cells}
Let $(M,U_M)$ and $(N,U_N)$ be cells of Schurian representations such that we have $\Hom(M(\lambda),N(\mu))=0$ for all $(\lambda,\mu)\in U_M\times U_N$ and let $U_{N,M}\subset R(N,M)$ be a subspace which is universal for $(U_N,U_M)$. Let $n=\dim U_{N,M}$ and assume that $\cR_{N,M}=\{e_1,\ldots,e_n\}$ is a basis of $U_{N,M}$. Then there exists a $(\Gr_{d}(\kk^n)\times U_M\times U_N)$-family of nonisomorphic indecomposable representations of dimension $d\cdot \udim M+\udim N$ such that, under the identification \eqref{eq:RBB}, 
\begin{equation*}C_I=\left(B_I,\left\{\left(\tau,\lambda\otimes\id_{\kk^d},\mu\right)\mid \tau\in \iota_I(A_I),\,\lambda\in U_M,\mu\in U_N\right\}\right)\end{equation*}
is a cell of indecomposables with  $\dim C_I=\dim U_M+\dim U_N+\dim A_I$ for every strictly increasing $I=(i_1,\ldots,i_d)$. Moreover, $\{C_I\}_I$ is a mosaic of indecomposables.
\end{theorem}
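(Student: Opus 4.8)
The strategy is to assemble Theorem \ref{thm:cells} by combining Proposition \ref{pro:KnPn} (which handles the $\Gr_d(\kk^n)$ direction with fixed outer terms) with the separating and strong criteria of Theorems \ref{thm:strong} and \ref{thm:separating} (which handle variation of the outer terms $\lambda \in U_M$, $\mu \in U_N$). The main point is that these two mechanisms are essentially independent because of the disjointness of their roles in the triangular structure of $R(B,B)$, and the proof amounts to verifying that all hypotheses of the cited results hold simultaneously and that isomorphisms across different cells force $I=I'$.

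First I would fix a strictly increasing $I$ and identify $B_I$ as in Remark \ref{rmk:grasscells}, realized as a middle term of an extension of $N$ by $M\otimes\kk^d$ using the basis vector $\sum_{j=1}^d e_{i_j}\otimes f_j$ of $U_{N,M}\otimes\kk^d$. Under the identification \eqref{eq:RBB} with outer terms $M\otimes\kk^d$ and $N$, a deformation parameter of $B_I$ decomposes into a piece in $R(N,M\otimes\kk^d)$ (the Grassmannian direction, captured by $\iota_I(A_I)$), a piece $\lambda\otimes\id_{\kk^d}$ acting on $M\otimes\kk^d$, and a piece $\mu$ acting on $N$. The key observation is that since $(M,U_M)$ and $(N,U_N)$ are cells of Schurian representations, the deformed outer terms $M(\lambda)\otimes\kk^d = (M\otimes\kk^d)(\lambda\otimes\id)$ and $N(\mu)$ are indecomposable (indeed $M(\lambda)$ Schurian forces $M(\lambda)\otimes\kk^d$ to have local endomorphism ring in the relevant quotient sense needed by Lemma \ref{lem:indecomposable1}), and the universality of $U_{N,M}$ for $(U_N,U_M)$ guarantees that $\iota_I(A_I)$ continues to represent a subset of $\Ext(N(\mu),M(\lambda)\otimes\kk^d)$ for every $(\lambda,\mu)$. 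The hypothesis $\Hom(M(\lambda),N(\mu))=0$ makes the relevant $\Theta$-maps vanish, since $\Hom(M(\lambda)\otimes\kk^d,N(\mu))\cong\Hom(M(\lambda),N(\mu))\otimes\Hom(\kk^d,\kk)=0$, so Theorem \ref{thm:strong} applies to give that each $C_I$ is strong and Theorem \ref{thm:separating}(b) applies to give that each $C_I$ is separating. The dimension count $\dim C_I=\dim U_M+\dim U_N+\dim A_I$ is then immediate from the product structure of the parameter set and the fact that $\iota_I$ is a linear embedding.

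To show $\{C_I\}_I$ is a mosaic, I would take an isomorphism $\phi\colon B_I(\tau,\lambda\otimes\id,\mu)\xto{\sim} B_{I'}(\tau',\lambda'\otimes\id,\mu')$ and run the argument of Theorem \ref{thm:separating}. Lemma \ref{lem:sesmorphism} gives a morphism of the two defining short exact sequences; comparing outer terms forces $\lambda=\lambda'$, $\mu=\mu'$ by separation of $U_M$ and $U_N$, and the triangular analysis (using $\End(M(\lambda))=\End(N(\mu))=\kk$ and $\Hom=0$) reduces the middle isomorphism to an isomorphism of the corresponding Kronecker representations $A,A'\in R_{(1,d)}(K(n))$ via Proposition \ref{pro:KnPn}. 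Since distinct Schubert cells $A_I$, $A_{I'}$ parametrize nonisomorphic points of the Grassmannian (different Schubert cells are disjoint in $\Gr_d(\kk^n)$), this forces $I=I'$, and then separation within $C_I$ forces equality of all parameters.

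\textbf{Main obstacle.} The hardest part will be carefully tracking the tensor-factor bookkeeping so that the three pieces of the deformation genuinely decouple: specifically, verifying that the Grassmannian direction $\iota_I(A_I)\subset R(N,M\otimes\kk^d)$ remains universal (represents a subset of the varying $\Ext$) once the outer terms deform to $M(\lambda)\otimes\kk^d$ and $N(\mu)$, and that the $\End=\kk$ hypotheses of Theorem \ref{thm:separating}(b) survive tensoring with $\kk^d$ in exactly the form used in Proposition \ref{pro:KnPn}. The conceptual content is low once the identifications $R(M\otimes\kk^d,M\otimes\kk^d)\cong R(M,M)\otimes\Hom(\kk^d,\kk^d)$ and $\Ext(N,M\otimes\kk^d)\cong\Ext(N,M)\otimes\kk^d$ are in hand, but it requires care to confirm that universality of $U_{N,M}$ for $(U_N,U_M)$ is preserved under the $\otimes\kk^d$ operation uniformly over the cell, which is what glues Proposition \ref{pro:KnPn} to the separating/strong framework.
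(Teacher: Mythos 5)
Your overall architecture (Proposition \ref{pro:KnPn} for the Grassmannian direction, a separating-type argument for the outer parameters, Schubert-cell disjointness to force $I=I'$) is close to the paper's, and your final mosaic paragraph essentially reproduces the paper's second step. But there is a genuine gap at the heart of your strongness/separating argument: you claim that $M(\lambda)\otimes\kk^d=(M\otimes\kk^d)(\lambda\otimes\id_{\kk^d})$ is indecomposable, with ``local endomorphism ring in the relevant quotient sense,'' and on that basis you invoke Lemma \ref{lem:indecomposable1}, Theorem \ref{thm:strong}, and Theorem \ref{thm:separating}(b). For $d\geq 2$ this is false: $M(\lambda)\otimes\kk^d\cong M(\lambda)^{\oplus d}$ is decomposable, and $\End(M(\lambda)\otimes\kk^d)\cong M_d(\kk)$ is neither local nor equal to $\kk$. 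Consequently the family $\left\{\lambda\otimes\id_{\kk^d}\mid\lambda\in U_M\right\}\subset R(M\otimes\kk^d,M\otimes\kk^d)$ is not strong, so the hypotheses of Theorem \ref{thm:strong} fail, and the hypothesis $\End=\kk$ of Theorem \ref{thm:separating}(b) fails as well. The ``main obstacle'' you flag at the end --- whether the $\End=\kk$ hypothesis survives tensoring with $\kk^d$ --- is not a bookkeeping issue; it is exactly where your route breaks down, and it would only go through in the case $d=1$.

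The paper avoids this by never asking the sub-object $M\otimes\kk^d$ to be indecomposable. Indecomposability of $B_I(\tau,\lambda\otimes\id_{\kk^d},\mu)$ is obtained by applying Proposition \ref{pro:KnPn} to each deformed pair $(M(\lambda),N(\mu))$: its hypotheses (both representations Schurian, $\Hom(M(\lambda),N(\mu))=0$, and $\{e_1,\ldots,e_n\}$ representing a basis of $\Ext(N(\mu),M(\lambda))$, the last point being what universality of $U_{N,M}$ provides) do hold uniformly over $U_M\times U_N$, so the middle term is indecomposable precisely because the associated Kronecker matrix $A\in A_I$ has rank $d$ --- not because the outer terms are indecomposable. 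For the mosaic claim, the paper argues only \emph{analogously to} (not \emph{by}) Theorem \ref{thm:separating}: Lemma \ref{lem:sesmorphism} plus a dimension count give isomorphisms of the outer terms, whence $\lambda=\lambda'$ and $\mu=\mu'$ (note this step needs Krull--Schmidt to pass from $M(\lambda)^{\oplus d}\cong M(\lambda')^{\oplus d}$ to $M(\lambda)\cong M(\lambda')$ before invoking that $U_M$ is separating), and then the isomorphism-reflection part of Proposition \ref{pro:KnPn} together with Remark \ref{rmk:grasscells} forces $\tau=\tau'$ and $I=I'$. If you replace your appeals to Theorem \ref{thm:strong} and Theorem \ref{thm:separating}(b) by these applications of Proposition \ref{pro:KnPn}, your outline becomes the paper's proof.
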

\begin{proof} As $(M,U_M)$ and $(N,U_N)$ are cells of Schurian representations with $\Hom(M(\lambda),N(\mu))=0$ for $(\lambda,\mu)\in U_M\times U_N$ and as $U_{N,M}$ is universal for $(U_N,U_M)$, we can apply Proposition \ref{pro:KnPn} to every pair $M(\lambda),\,N(\mu)$ which shows that every representation $B_I(\tau,\lambda\otimes\id_{\kk^d},\mu)$ with $\tau\in \iota_I(A_I)$ is indecomposable.

Furthermore, if $B_I(\tau,\lambda\otimes\id_{\kk^d},\mu)\cong B_{I'}(\tau',\lambda'\otimes\id_{\kk^d},\mu')$, analogously to the proof of Theorem \ref{thm:separating}, we obtain $\lambda=\lambda'$ and $\mu=\mu'$. But then again Proposition \ref{pro:KnPn} together with Remark \ref{rmk:grasscells} shows that $\tau=\tau'$ and $I=I'$.

\end{proof}
\begin{example}
A situation where Theorem \ref{thm:cells} turns out to be very useful is the case of roots $\alpha$ such that $\alpha':=\alpha-\alpha_{q_0}q_0$ is a Schur root and $q_0$ is a sink or source of $Q$. Let $Q'$ be the full subquiver of $Q$ with vertices $Q_0\backslash\{q_0\}$ and let $M\in\rep (Q')$. Then we have $\Hom(S_{q_0},M)=\Hom(M,S_{q_0})=0$ and assuming that $q_0$ is a source, we have have $\Ext(M,S_{q_0})=0$ and $n:=\dim\Ext(S_{q_0},M)=\sum_{a:q_0\to q}\alpha_{q}$. 

If $(M,U_M)$ is a cell of Schurian representation of dimension $\alpha'$, Theorem \ref{thm:cells} gives a  $(\Gr_{\alpha_{q_0}}(k^n)\times U_M)$-parameter family of indecomposables of dimension $\alpha$ and thus a mosaic of indecomposables. 
\end{example}

\section{Tree modules and tree normal forms}\label{sec:trees}
\noindent In this section, we apply the results of the previous section to representations which admit a particularly nice basis.  
Again the quiver $Q$ and field $\kk$ are fixed but arbitrary.
\subsection{Tree modules}
Tree modules are quiver representations whose structure can be encoded by a directed graph.  We use the term ``tree modules'' in the more general sense of Ringel \cite{Ringel:1998gf}, which is less restrictive than the usage elsewhere \cite{Crawley-Boevey:1989rr,MR1090218}.
A \emph{morphism of quivers}, written $\tQ \xto{f} Q$, consists of two maps $f_0\colon \tQ_0 \to Q_0$ and $f_1 \colon \tQ_1 \to Q_1$ satisfying $s(f_1(a))=f_0 (s(a))$ and $t(f_1(a)) = f_0(t(a))$ for all $a \in \tQ_0$ (we use $s,t$ for source and target maps of both $\tQ$ and $Q$ since context makes it clear).

\begin{definition}
Fix a quiver $Q$.  A \emph{quiver labeled by} (or \emph{colored by}) $Q$ is a pair $(\tQ, f)$ consisting of a quiver $\tQ$ and a morphism of quivers $\tQ \xto{f} Q$, called the \emph{structure map}.  For each $x \in Q_0$, we refer to $f^{-1}(x)$ as the set of vertices \emph{labeled by} $x$, and similarly for $a \in Q_1$.
\end{definition}

We will specify $f$ in examples by drawing $\tQ$ and labeling its arrows with names of arrows of $Q$, because the compatibility condition determines the labels of the vertices.   We will assume that for each pair of vertices $y,z$ in $\tQ$, there are never two arrows with the same label from $y$ to $z$.
Since we rarely deal with more than one structure map for the same $\tQ$, we usually omit $f$ from the notation.

We are particularly interested in the case that the underlying graph of $\tQ$ is a tree, meaning that it is connected and has exactly one fewer edge than it has vertices.  In this case we usually use the notation $T$ instead of $\tQ$ and, $Q$ being fixed, say $T$ is a \emph{labeled tree}.

We call a $\kk$-basis $\cB$ of $M \in \rep(Q)$ \emph{homogeneous} if $\cB_q := \cB \cap M_q$ is a basis of $M_q$ for each $q \in Q_0$.
Given $M \in \rep(Q)$ and a homogeneous basis $\cB$ of $M$, we define a quiver $\Gamma^\cB$ labeled by $Q$, with structure map $F^\cB\colon \Gamma^\cB\to Q$, known as the \emph{coefficient quiver} of $M$ with respect to $\cB$. We take $\cB$ as the set of vertices of $\Gamma^\cB$, with each subset $\cB_q$ lying over $q$ (i.e. $F^\cB(\cB_q)=q$).  For each $a \in Q_1$ and $b' \in \cB_{s(a)}$, we take the unique expression 
\begin{equation}
M_a(b') = \sum_{b \in \cB_{t(a)}} c^a_{b, b'} b, \qquad c^a_{b, b'} \in \kk,
\end{equation}
and draw a labeled arrow $b' \xto{a} b$ if and only if $c^a_{b, b'} \neq 0$.

\begin{definition}
An indecomposable representation $M$ of $Q$ is said to be a \emph{tree module} if there exists a homogeneous basis $\cB$ of $M$ such that the underlying graph of the coefficient quiver $\Gamma^\cB$ is a tree.
In this case we refer to $\cB$ as a \emph{tree basis} of $M$.
\end{definition}

We emphasize that a tree module is indecomposable by definition in this paper.
A given module can be a tree module with respect to several different bases, yielding different coefficient quivers.  If we work with a fixed but arbitrary tree basis for a given tree module $M$, we omit the basis from the notation and instead denote the coefficient quiver together with its structure map by $F^M \colon \Gamma^M \to Q$.

An equivalent definition is that an indecomposable representation $M$ is a tree module if and only if it admits a matrix presentation consisting of 1s and 0s, with precisely $\dim_\kk M-1$ nonzero entries (which is the minimum possible for $M$ indecomposable).
So we can think of tree modules as indecomposables which can be presented as \emph{sparsely} as possible.

\subsection{Tree-shaped extensions}\label{sec:treeshapedext}

In this section we consider extensions of a nice form with respect to a given basis, for example if the representations in question are tree modules.  We present some tools for recursively constructing such extensions. These tools will help later in various applications, for instance when constructing tree normal forms for quiver representations.

Let $M, N\in \rep(Q)$ be representations with homogeneous bases $\cB_M, \cB_N$.  These bases induce a standard basis of $R(N,M)$ whose elements $f=(f_a)_{a\in Q_1}$ are matrix tuples with exactly one entry of one matrix equal to 1, and the rest equal to 0 which we call standard basis vectors (with respect to $\cB_M$ and $\cB_N$) in the following.

\begin{definition}\label{def:treeshaped}
Let $M, N\in \rep(Q)$ be representations with homogeneous bases $\cB_M, \cB_N$.
 A subset $\cR_{N,M} \subset R(N,M)$ of standard basis vectors \emph{represents a tree-shaped basis} of $\Ext(N,M)$ if it represents a basis of $\Ext(N,M)$ in the sense of Definition \ref{def:representssubset}.
We call the corresponding basis $\cE_{N,M}:=\pi_{N,M}(\cR_{N,M})$ of $\Ext(N,M)$ a \emph{tree-shaped basis of} $\Ext(N,M)$.
\end{definition}
The following is straightforward:
\begin{lemma}\label{lem:treeextquiver}
Let $M, N\in \rep(Q)$ be representations with homogeneous bases $\cB_M, \cB_N$, and $f \in R(N,M)$ be a standard basis vector such that $f_{a_0}(q)=q'$ where $q\in(\cB_N)_{s(a_0)}$, $q'\in(\cB_M)_{t(a_0)}$. Then the coefficient quiver of $B(f) \in \rep(Q)$ with respect to $\cB_N \coprod \cB_M$ is obtained by adding a labeled arrow $q\xrightarrow{a_0} q'$ to $\Gamma^{\cB_N}\coprod\Gamma^{\cB_M}$.
\end{lemma}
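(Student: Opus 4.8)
The plan is to unwind the definition of the coefficient quiver directly, since the statement is really just a bookkeeping exercise in how the block-triangular matrix structure of $B(f)$ translates into arrows of its coefficient quiver. First I would recall that the middle term $B(f)$ has underlying spaces $B(f)_q = M_q \oplus N_q$ and maps $B(f)_a = \begin{pmatrix} M_a & f_a \\ 0 & N_a \end{pmatrix}$, and that the disjoint union $\cB_N \coprod \cB_M$ is by construction a homogeneous basis of $B(f)$ (each basis vector of $\cB_M$ sits in the $M_q$ summand, each of $\cB_N$ in the $N_q$ summand, at the appropriate vertex). So the coefficient quiver $\Gamma^{B(f)}$ is well defined with respect to this basis, and its vertex set is exactly $\cB_N \coprod \cB_M$, i.e. the disjoint union of the vertex sets of $\Gamma^{\cB_M}$ and $\Gamma^{\cB_N}$.

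Next I would compute the arrows. For each $a \in Q_1$ and each basis vector $b'$, I read off the expansion of $B(f)_a(b')$ in the basis $\cB_N \coprod \cB_M$ and compare its coefficients $c^a_{b,b'}$ to those of $\Gamma^{\cB_M}$, $\Gamma^{\cB_N}$. The block-triangular form makes this a three-case check. If $b' \in \cB_M$, then $B(f)_a(b') = M_a(b')$ lies entirely in $M_{t(a)}$, so the arrows out of $b'$ in $\Gamma^{B(f)}$ are exactly those of $\Gamma^{\cB_M}$. If $b' \in \cB_N$, the lower-right block gives the $N$-component $N_a(b') \in N_{t(a)}$, reproducing the arrows of $\Gamma^{\cB_N}$; the upper-right block $f_a$ contributes a possible $M$-component $f_a(b') \in M_{t(a)}$. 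Since $f$ is a standard basis vector, $f_a = 0$ for all $a \neq a_0$, and $f_{a_0}$ has a single nonzero entry sending $q \mapsto q'$. Thus the only arrow of $\Gamma^{B(f)}$ not already present in $\Gamma^{\cB_M} \coprod \Gamma^{\cB_N}$ is the labeled arrow $q \xrightarrow{a_0} q'$, with $q \in (\cB_N)_{s(a_0)}$ and $q' \in (\cB_M)_{t(a_0)}$. This is precisely the asserted description.

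I do not anticipate a genuine obstacle here, as the lemma is labeled ``straightforward'' and the content is entirely an identification of matrix coefficients. The only point requiring a little care is confirming that the new arrow is genuinely new and unique, i.e. that it is not already among the arrows of $\Gamma^{\cB_N}$ or $\Gamma^{\cB_M}$: this follows because arrows of $\Gamma^{\cB_M}$ connect two vertices of $\cB_M$, arrows of $\Gamma^{\cB_N}$ connect two vertices of $\cB_N$, whereas $q \xrightarrow{a_0} q'$ crosses from a vertex of $\cB_N$ to a vertex of $\cB_M$, so it cannot coincide with any existing arrow. I would also note in passing that this lemma is the engine behind tree-shaped extensions: when $\cR_{N,M}$ represents a tree-shaped basis and one builds up $B$ by adjoining several such standard basis vectors, each one adds exactly one crossing arrow, so if $\Gamma^{\cB_M}$ and $\Gamma^{\cB_N}$ are trees then the result remains a tree precisely when these crossing arrows introduce no cycle.
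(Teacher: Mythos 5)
Your proof is correct: it is exactly the direct coefficient check that the paper has in mind, which it omits entirely by declaring the lemma ``straightforward.'' The three-case analysis of the block-triangular matrices $B(f)_a$, together with the observation that the new arrow crosses from $\cB_N$ to $\cB_M$ and hence cannot duplicate an existing one, fully justifies the statement.
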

In the following, we will often denote the tree-shaped basis element $f\in R(N,M)$ as in the Lemma just by $q\xrightarrow{a_0} q'$. 

 Let $M,N\in\rep(Q)$. For a fixed a short exact sequence $\ses{M}{B}{N}$ and given an additional representation $L$, we can apply the functor $\Hom(L,-)$ to obtain a long exact sequence

\begin{equation}
  \begin{tikzcd}[baseline=(current  bounding  box.center)]
        0 \arrow[r] & \Hom(L,M)\arrow[draw=none]{d}[name=P, shape=coordinate]{} \arrow[r]&\Hom(L,B)\ar[r]& \Hom(L,N)\ar[lld,rounded corners,
to path={ -- ([xshift=5ex]\tikztostart.east)
|- (P) [near end]\tikztonodes
-| ([xshift=-5ex]\tikztotarget.west)
-- (\tikztotarget)},"\delta_L"]&&\\ &\Ext(L,M) \arrow[r]&\Ext(L,B)\arrow[r]& \Ext(L,N) \arrow[r] &0 
    \end{tikzcd}%
\end{equation}   

\begin{lemma}\label{lem:connectingmap}
Fix a short exact sequence $\ses{M}{B}{N}$, where $B$ is given by a tuple $f=(f_a)_{a\in Q_1} \in R(N,M)$.  For an arbitrary $L \in \rep(Q)$, the connecting homomorphism $\delta_{L} \colon \Hom(L,N) \to \Ext(L,M)$ is given by the composition
\begin{equation*}
\Hom(L,N) \to R(L,M) \xto{\pi_{L,M}} \Ext(L,M)
\end{equation*}
where the first map is postcomposition with $f$, that is, $(g_q)_{q\in Q_0}\mapsto (f_a\circ g_{s(a)})_{a\in Q_1}$.

A completely analogous description is obtained for the connecting homomorphism induced by the functor $\Hom(-,L)$, where precomposition replaces postcomposition.
\end{lemma}
\begin{proof}
The connecting homomorphism $\delta_{L}$ sends $g\in\Hom(L,N)$ to the extension of $L$ by $M$ obtained by pulling back along $g$, represented by the commutative diagram

\begin{equation}\xymatrix{0\ar[r]&M\ar[r]&B\ar^{\pi_N}[r]&N\ar[r]&0\\0\ar[r]&M\ar[r]\ar@{=}[u]&C\ar^{\pi_L}[r]\ar[u]^u&L\ar[r]\ar^{g}[u]&0}.\end{equation}
If we write $u=\begin{pmatrix}u_{1,1}&u_{1,2}\\u_{2,1}&u_{2,2}\end{pmatrix}$ with obvious linear maps $u_{i,j}$ and if we write the linear maps $C_a$ as \begin{equation}C_a=\begin{pmatrix}M_a &h_a\\0&L_a\end{pmatrix}\end{equation}
for some $h\in R(L,M)$,
the commutativity of the right square yields $u_{2,1}=0$ and $u_{2,2}=g$. The description of the pullback $C$ as a submodule of $B\oplus L$ gives $u_{1,1}=\id_M$ and $0=u_{1,2}:L\to M$. As furthermore $u:C\to B$ is a morphism of quiver representations, we obtain
\begin{equation}u_{t(a)}\circ C_a=\begin{pmatrix}M_a&h_a\\0&g_{t(a)}\circ L_a\end{pmatrix}=\begin{pmatrix}M_a&f_a\circ g_{s(a)}\\0&N_a\circ g_{s(a)}\end{pmatrix}=B_a\circ u_{s(a)}\end{equation}
for every $a\in Q_1$ which yields the claim.
\end{proof}

Now we see how bases of various $\Ext$-spaces associated to $M, N$ can be used to obtain a basis of $\Ext(B, B)$.

\begin{lemma}\label{tree-shaped-basis} 
Let $M$ and $N$ be two representations and let $\cR_{X,Y}\subset R(X,Y)$ represent bases of $\Ext(X,Y)$ for $X,Y\in\{M,N\}$, in the sense of Definition \ref{def:representssubset}.  Write $\cR_X:=\cR_{X,X}$ for short.

For each $e\in R(N,M)$ and corresponding $\pi_{N,M}(e):\ses{M}{B}{N}$,
 there exist subsets $\cR_M'\subset\cR_M$, $\cR_N'\subset \cR_N$ and $\cR_{N,M}'\subset\cR_{N,M}$ 
such that, under the identification \eqref{eq:RBB}, $\cR_M'\cup\cR_N'\cup\cR_{N,M}'\cup\cR_{M,N} \subset R(B,B)$ represents a basis of $\Ext(B,B)$.
\end{lemma}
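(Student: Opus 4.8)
The plan is to analyze the coboundary map $d_{B,B}\colon\bigoplus_{q}\Hom(B_q,B_q)\to R(B,B)$ directly, using the vector-space decomposition $B_q=M_q\oplus N_q$ to write both its source and target in block form. On the target side this is exactly \eqref{eq:RBB}, with summands $R(M,N),R(M,M),R(N,N),R(N,M)$; on the source side an endomorphism $\phi_q$ of $B_q$ decomposes into four components $a_q\colon M_q\to M_q$, $b_q\colon N_q\to M_q$, $c_q\colon M_q\to N_q$, $d_q\colon N_q\to N_q$. Writing $B_a=\left(\begin{smallmatrix}M_a&e_a\\0&N_a\end{smallmatrix}\right)$ and computing $\phi_{t(a)}B_a-B_a\phi_{s(a)}$, I would record the four blocks of $d_{B,B}(\phi)$ explicitly. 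The key structural observation is that, after ordering the source summands as $(c,a,d,b)$ and the target summands as $(R(M,N),R(M,M),R(N,N),R(N,M))$, the map $d_{B,B}$ becomes \emph{block lower-triangular}: its diagonal blocks are precisely $d_{M,N},d_{M,M},d_{N,N},d_{N,M}$, while every off-diagonal block is given by pre- or post-composition with the cocycle $e$. By Lemma \ref{lem:connectingmap} these off-diagonal blocks are, up to sign, representatives of the connecting homomorphisms attached to the sequence $\ses{M}{B}{N}$, which is what ultimately cuts the outer contributions down to proper subsets.

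From here the argument is linear algebra on cokernels. For a block lower-triangular map, every class in $\coker d_{B,B}=\Ext(B,B)$ can be reduced, one block at a time from the top, to a representative whose components lie in fixed complements of the images of the diagonal blocks $d_{M,N},d_{M,M},d_{N,N},d_{N,M}$. Since those complements are represented (via $\pi_{M,N},\pi_{M,M},\pi_{N,N},\pi_{N,M}$) by $\cR_{M,N},\cR_M,\cR_N,\cR_{N,M}$ respectively, the images $\pi_{B,B}\bigl(\iota_{X,Y}(r)\bigr)$ of the elements of $\cR_{M,N}\cup\cR_M\cup\cR_N\cup\cR_{N,M}$ span $\Ext(B,B)$.

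It then remains to arrange that $\cR_{M,N}$ survives intact while the other three sets are thinned to subsets. The point is that $R(M,N)$ sits at the top of the triangle: the $(2,1)$-block of $d_{B,B}(\phi)$ depends only on $c$ and equals $d_{M,N}(c)$, so projection onto this block induces a surjection $\Ext(B,B)\twoheadrightarrow\Ext(M,N)$ (concretely $\iota^{*}\pi_{*}$, the map induced by $M\hookrightarrow B\twoheadrightarrow N$) sending the class of $\iota_{M,N}(r)$ to $\pi_{M,N}(r)$. As $\cR_{M,N}$ represents a basis of $\Ext(M,N)$, the classes $\{\pi_{B,B}(\iota_{M,N}(r))\mid r\in\cR_{M,N}\}$ map to a basis downstairs and are therefore linearly independent in $\Ext(B,B)$. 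A spanning set that contains a linearly independent subset can always be reduced to a basis retaining that subset, so I would discard elements only from $\cR_M\cup\cR_N\cup\cR_{N,M}$, producing the desired $\cR_M'\subset\cR_M$, $\cR_N'\subset\cR_N$, $\cR_{N,M}'\subset\cR_{N,M}$.

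The main obstacle I anticipate is organizational rather than conceptual: getting the block computation of $d_{B,B}$ exactly right and, in particular, correctly identifying which summand of \eqref{eq:RBB} sits at the top of the triangle. It is $R(M,N)$, the component ``from the sub to the quotient'', which is unconstrained and hence kept full, whereas $R(N,M)$ sits at the bottom and is reduced the most. A cleaner but essentially equivalent route would avoid the explicit matrix by splicing the long exact sequences obtained from applying $\Hom(B,-)$, $\Hom(-,M)$ and $\Hom(-,N)$ to $\ses{M}{B}{N}$; there the only extra care needed is to check that the maps in these sequences carry the classes $\pi_{X,Y}(r)$ to $\pi_{B,B}(\iota_{X,Y}(r))$, which is again Lemma \ref{lem:connectingmap} together with naturality.
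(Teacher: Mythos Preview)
Your proof is correct. The paper takes what you yourself call the ``cleaner but essentially equivalent route'': it applies $\Hom(B,-)$ to $\ses{M}{B}{N}$ to split $\Ext(B,B)\cong (\Ext(B,M)/\im\delta_B)\oplus\Ext(B,N)$, then applies $\Hom(-,N)$ and $\Hom(-,M)$ to decompose each piece further, obtaining a filtration of $\Ext(B,B)$ with subquotients $\Ext(M,N)$, $\Ext(N,N)/\im\delta_N$, and $(\Ext(N,M)/\im\delta_M\oplus\Ext(M,M))/\im\delta_B$, from which the subsets $\cR_N'$, $\cR_{N,M}'$, $\cR_M'$ are read off in turn.

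Your block-triangular analysis of $d_{B,B}$ is the linear-algebraic shadow of that same argument: the off-diagonal blocks (pre/post-composition with $e$) are exactly the connecting maps $\delta_M,\delta_N,\delta_B$ via Lemma~\ref{lem:connectingmap}, as you observe. The paper's route gives slightly more, namely it specifies \emph{which} quotients the reduced subsets represent (this is exploited in Corollary~\ref{cor:extbasis}), whereas your spanning-plus-independence argument only shows that \emph{some} trimming works. Conversely, your approach makes structurally transparent why $R(M,N)$ is the distinguished block kept in full, rather than having this emerge at the end of a chain of long exact sequences.
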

\begin{proof}
The induced long exact sequence
\begin{equation}
\Hom(B, N) \xto{\delta_B} \Ext(B, M) \to \Ext(B, B) \to \Ext(B, N) \to 0
\end{equation}
identifies $\Ext(B,B) \cong \Ext(B, M)/\im \delta_B\oplus \Ext(B, N)$.  We proceed by finding a basis of each direct summand.
 
The exact sequence
\begin{equation}
\Hom(M,N) \xto{\delta_N} \Ext(N,N) \to \Ext(B,N) \to \Ext(M,N) \to 0
\end{equation}
further decomposes the second summand as
\begin{equation}
\Ext(B, N)\cong  \Ext(N,N)/\im\delta_N\oplus\Ext(M,N).
\end{equation}
Since the residue classes of $\pi_{N,N}(\cR_N) \cup \pi_{M,N}(\cR_{M,N})$ span the right hand side, we can choose a subset $\cR'_N \cup \cR_{M,N} \subseteq \cR_N \cup \cR_{M,N}$ which represents a basis of $\Ext(B, N)$.

Similarly, we can use the exact sequence
\begin{equation}
\Hom(M,M) \xto{\delta_M} \Ext(N,M) \to \Ext(B,M) \to \Ext(M,M) \to 0
\end{equation}
to decompose $\Ext(B,M)$ and find 
\begin{equation}\label{eq:ExtBM}
\Ext(B, M)/\im \delta_B \cong \left(\Ext(N,M)/\im \delta_M\oplus\Ext(M,M)\right)/\im\delta_B .
\end{equation}
As above, we first choose a subset $\cR_{N,M}''\cup\cR_M \subseteq \cR_{N,M}\cup\cR_M$ representing a basis of the space  $\Ext(N,M)/\im\delta_M\oplus \Ext(M,M)$, then in a second step, we choose $\cR_{N,M}'\cup\cR_M' \subseteq \cR_{N,M}''\cup\cR_M$ representing a basis of the quotient on the right hand side of \eqref{eq:ExtBM}.
Combining this with the previous paragraph, we get a subset of $R(B,B)$ representing a basis of $\Ext(B,B)$.
\end{proof}

\begin{corollary}\label{cor:extbasis}
Let $M$ and $N$ be Schurian representations with $\Hom(M,N)=0$ and let $\cR_{X,Y}\subset R(X,Y)$ represent bases of $\Ext(X,Y)$ for $X,Y\in\{M,N\}$. If $e\in\cR_{N,M}$, the set 
\begin{equation*}\cR_{B}=\cR_{N,M}\backslash\{e\}\cup\cR_M\cup\cR_N\cup \cR_{M,N}\end{equation*}
represents a basis of $\Ext(B,B)$.
\end{corollary}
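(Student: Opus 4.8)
The plan is to specialize the dimension-counting argument of Lemma \ref{tree-shaped-basis} to the Schurian setting, where the connecting maps $\delta_M,\delta_N,\delta_B$ become especially transparent, and to check that no generators need to be discarded except the single extension $e$ defining $B$. Recall from Lemma \ref{tree-shaped-basis} the two short decompositions
\begin{equation*}
\Ext(B,N)\cong \Ext(N,N)/\im\delta_N\oplus\Ext(M,N),\qquad
\Ext(B,M)\cong \Ext(N,M)/\im\delta_M\oplus\Ext(M,M),
\end{equation*}
together with the identification $\Ext(B,B)\cong\Ext(B,M)/\im\delta_B\oplus\Ext(B,N)$. The strategy is to show that under the hypotheses $\End(M)=\End(N)=\kk$ and $\Hom(M,N)=0$, the maps $\delta_N$ and $\delta_M$ vanish while $\delta_B$ has one-dimensional image spanned by the class of $e$; granting this, each quotient above drops no basis vectors from the Schurian pieces and kills exactly $e$ from $\cR_{N,M}$, yielding the asserted basis.

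First I would handle $\delta_N\colon\Hom(M,N)\to\Ext(N,N)$. Since $\Hom(M,N)=0$ by hypothesis, this map is automatically zero, so $\Ext(B,N)\cong\Ext(N,N)\oplus\Ext(M,N)$ and the full sets $\cR_N$ and $\cR_{M,N}$ survive; that is, we may take $\cR_N'=\cR_N$. Next I would treat $\delta_M\colon\Hom(M,M)\to\Ext(N,M)$. By Lemma \ref{lem:connectingmap}, $\delta_M$ sends $g\in\End(M)$ to the class $\pi_{N,M}(e\circ g)$ obtained by precomposing the defining tuple $e$ with $g$. Since $M$ is Schurian, $\End(M)=\kk\cdot\id_M$, so the image of $\delta_M$ is spanned by $\pi_{N,M}(e\circ\id_M)=\pi_{N,M}(e)$, the class of the defining extension. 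The same reasoning via the analogous ($\Hom(-,M)$) description shows $\delta_B\colon\Hom(B,M)\to\cdots$ also produces only this one class; more directly, because $N$ is Schurian the relevant connecting map factors through the one-dimensional $\End(N)$ and its image is again spanned by $\pi_{N,M}(e)$. Either way, the only relation imposed by passing to $\Ext(N,M)/\im\delta_M$ (and then the further $\im\delta_B$ quotient) is that the single class $\pi_{N,M}(e)$ becomes zero.

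With these vanishing and one-dimensionality statements in hand, the bookkeeping is routine: in $\Ext(B,M)/\im\delta_B$ the summand $\Ext(M,M)$ is untouched so $\cR_M'=\cR_M$, and the summand $\Ext(N,M)$ loses exactly the line through $\pi_{N,M}(e)$, so since $e\in\cR_{N,M}$ we may take $\cR_{N,M}'=\cR_{N,M}\setminus\{e\}$. Assembling $\cR_B=\cR_M\cup\cR_N\cup(\cR_{N,M}\setminus\{e\})\cup\cR_{M,N}$ then represents a basis of $\Ext(B,B)$ by Lemma \ref{tree-shaped-basis}. The main obstacle I anticipate is verifying cleanly that the image of $\delta_M$ (equivalently $\delta_B$) is precisely the span of $e$ and not something larger or smaller: this rests on combining the explicit precomposition formula from Lemma \ref{lem:connectingmap} with Schurianness of both $M$ and $N$, and one must be careful that $e\neq 0$ so that this image is genuinely one-dimensional, ensuring exactly one element is removed. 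A secondary point to confirm is that discarding $e$ from $\cR_{N,M}$ is consistent, i.e.\ that $e$ is one of the standard representatives in $\cR_{N,M}$, which is exactly the hypothesis $e\in\cR_{N,M}$.
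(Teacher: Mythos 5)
Your overall strategy is indeed the paper's: specialize Lemma \ref{tree-shaped-basis}, compute the three connecting maps using $\Hom(M,N)=0$ and Schurianness, and check that only $e$ gets discarded. Your handling of $\delta_N$ and $\delta_M$ is correct and matches the paper: $\delta_N=0$ because its domain $\Hom(M,N)$ vanishes, and $\im\delta_M=\langle\pi_{N,M}(e)\rangle$ because $\delta_M(\id_M)=\pi_{N,M}(e)$ (Lemma \ref{lem:connectingmap}) and $\End(M)=\kk$.

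The gap is in your treatment of $\delta_B\colon\Hom(B,N)\to\Ext(B,M)$ (note the domain is $\Hom(B,N)$, not $\Hom(B,M)$ as you wrote). You assert that its image is ``again spanned by $\pi_{N,M}(e)$,'' and you even describe $\im\delta_M$ and $\im\delta_B$ as equivalent. Taken literally this is false, and it matters: the identification is $\Ext(B,B)\cong\Ext(B,M)/\im\delta_B\oplus\Ext(B,N)$, so if $\im\delta_B$ were a genuinely one-dimensional subspace of $\Ext(B,M)$, your basis count would come out one element short. What the proof needs is $\im\delta_B=0$, and this requires an argument rather than the assertion that ``the only relation imposed \dots is that the single class $\pi_{N,M}(e)$ becomes zero.'' There are two ways to close it. The paper's way: $\End(N)=\kk$ and $\Hom(M,N)=0$ give $\Hom(B,N)=\kk\cdot\pi$, and $\pi=\pi\circ\id_B$ lies in the image of $\Hom(B,B)\to\Hom(B,N)$, so this map is surjective and $\delta_B=0$ by exactness. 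Or, staying closer to your phrasing: $\Hom(B,N)$ is spanned by $\pi$, and $\delta_B(\pi)$ is the pullback $\pi^*\bigl(\pi_{N,M}(e)\bigr)$ of the defining class along $\pi$; but exactness of $\Hom(M,M)\xto{\delta_M}\Ext(N,M)\xto{\pi^*}\Ext(B,M)$ gives $\ker\pi^*=\im\delta_M$, and $\pi_{N,M}(e)=\delta_M(\id_M)\in\im\delta_M$, so $\delta_B(\pi)=0$. With either of these one-line arguments inserted, your bookkeeping and the resulting basis $\cR_B$ are correct.
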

\begin{proof}We go through the proof of Lemma \ref{tree-shaped-basis} in this special case. As we have $\End(N)=\kk$ and $\Hom(M,N)=0$, it follows that $\Hom(B,N)=\kk$. Thus $\Hom(B,B)\to \Hom(B,N)$ is surjective because the image of $\id_B$ is $\pi$. Thus $\Ext(B,B)\cong \Ext(B,M)\oplus \Ext(B,N)$. Again $\Hom(M,N)=0$ yields $\Ext(B,N)\cong\Ext(N,N)\oplus\Ext(M,N)$. 

Finally, we have $\delta_M(\mathrm{id}_M)=\pi_{N,M}(e)$ by the standard interpretation of the connecting homomorphism (see Lemma \ref{lem:connectingmap}). Then $\End(M)=\kk$ gives that $\im \delta_M = \langle \pi_{N,M}(e) \rangle$. Thus $\Ext(B,M)\cong \Ext(N,M)/\langle\pi_{N,M}(e)\rangle\oplus\Ext(M,M)$ yields the claim.
\end{proof}

\begin{example} 

We give an example to illustrate the definitions and to show how Lemma \ref{tree-shaped-basis} can be applied to construct tree-shaped bases recursively. 
Denote the arrows for the generalized Kronecker quiver $K(3)$ by $a,b,c$. Consider the tree modules $T_1$ and $T_{2}$ defined by the coefficient quivers
\begin{center}

\begin{tikzpicture}[scale=0.8]

\draw (4,0) node (I1) {$1$} +(2,0) node (J1) {$2$}+(5,0) node (I2) {$1'$} +(7,1) node (J21) {$2'$} +(7,-1) node (J22) {$3'$}; 
\draw[->] (I1)--(J1) node[pos=.5,above] {$a$};
\draw[->] (I2)--(J21) node[pos=.5,above] {$b$};
\draw[->] (I2)--(J22) node[pos=.5,above] {$a$};
\end{tikzpicture}
\\

\end{center}
The following sets represent tree-shaped bases of the respective $\Ext$-spaces:
\begin{eqnarray}
\cR_{T_1}=\{1\xrightarrow{b} 2,\,1\xrightarrow{c} 2\},\,\cR_{T_{2},T_{2}}=\{1'\xrightarrow{c} 2',\,1'\xrightarrow{c}3'\},\nonumber\\
\cR_{T_1,T_{2}}=\{1\xrightarrow{c}2',\,1\xrightarrow{c}3',\,1\xrightarrow{b}3'\},\,\cR_{T_2,T_1}=\{1'\xrightarrow{c}2\}.
\end{eqnarray}
Consider the tree module $T$ defined as the middle term of the short exact sequence $\pi_{T_1,T_2}(1\xrightarrow{c}2')$.  From Lemma \ref{lem:treeextquiver}, its coefficient quiver is below.
\begin{center}
\begin{tikzpicture}[scale=0.8]

\draw (0,0) node (I1) {$1$} +(2,1) node (J1) {$2$}+(0,-2) node (I2) {$1'$} +(2,-1) node (J21) {$2'$} +(2,-3) node (J22) {$3'$}; 
\draw[->] (I1)--(J1) node[pos=.5,above] {$a$};
\draw[->] (I2)--(J21) node[pos=.5,above] {$b$};
\draw[->] (I2)--(J22) node[pos=.5,above] {$a$};

\draw[->] (I1)--(J21) node[pos=.5,above] {$c$};
\end{tikzpicture}
\end{center}
We apply Lemma \ref{tree-shaped-basis} to decompose $\Ext(T,T)$ as
\begin{equation}
\Ext(T_1,T_1)/\im\delta_{T_1}\oplus\Ext(T_2,T_1)\oplus\left(\Ext(T_1,T_2)/\im \delta_{T_2}\oplus\Ext(T_2,T_2)\right)/\im\delta_T, 
\end{equation}
where
\begin{equation}
\Hom(T_2,T_1) \xrightarrow{\delta_{T_1}} \Ext(T_1,T_1),\qquad \Hom(T_2,T_2) \xrightarrow{\delta_{T_2}} \Ext(T_1,T_2),\qquad \Hom(T,T_1) \xrightarrow{\delta_{T}} \Ext(T,T_2)
\end{equation}
are the respective connecting homomorphisms.
A tree-shaped basis of $\Ext(T,T)$ can then be obtained using Lemma \ref{lem:connectingmap} to explicitly determine the images of the connecting maps.
A basis of $\Hom(T_{2},T_1)$ is given by the map $f$ such that $f(1')=1$, $f(3')=2$, and $f$ is zero on the rest of the tree basis.  As $f(2')=0$, we have $\delta_{T_1}(f)=0$ which implies $\im\delta_{T_1}=0\subseteq \Ext(T_1,T_1)$.
As $T_2$ is Schurian, $\delta_{T_2}(\id_{T_2})=\pi_{T_1,T_2}(1\xrightarrow{c}2')$ yields $\im\delta_{T_2}=\langle\pi_{T_1,T_2}(1\xrightarrow{c}2')\rangle$. 
Finally, we have $\im\delta_T\cong\langle \pi_{T,T_2}(1'\xrightarrow{c}2')\rangle\subseteq\Ext(T,T_{2})$.
Therefore, we obtain our subset representing a tree-shaped basis of $\Ext(T,T)$:
\begin{equation}\cR_T=\{1\xrightarrow{b} 2,\,1\xrightarrow{c} 2,\,1'\xrightarrow{c} 2,\,1\xrightarrow{c}3',\,1\xrightarrow{b}3',\,1'\xrightarrow{c}3'\}.\end{equation}
 \end{example}

\subsection{Tree normal forms}
Let $\alpha\in\ZZ_{\geq 0}^{Q_0}$. If we refer to a tree module $T\in R_\alpha(Q)$, we already assume that the number of nonzero entries of the matrix tuple $(T_a)_{a\in Q_1}$ is $\dim_\kk T-1$, i.e. the coefficient quiver of the given matrix form is a tree. Kac's conjecture \cite[Conjecture 3]{Ka83} discussed in Section \ref{intro1} motivates the following definition. 
\begin{definition}\label{def:tnf} Let $\alpha\in\ZZ_{\geq 0}^{Q_0}$ be a root.\begin{enumerate}
\item Let $T\in R_\alpha(Q)$ be a tree module and $U\subseteq R(T,T)$ represent a subset
of $\Ext(T,T)$.  
We say that $M\in R_\alpha(Q)$ has a \emph{$(T,U)$-normal form} 
if there exists a $\lambda\in U$ such that $M\cong T(\lambda)$. 
\item We say that a subset $\cU\subset\mathrm{Ind}(Q,\alpha)$ admits a tree normal form if there exists a collection of tree modules $\{T_i \}_{i=1}^r \subset R_\alpha(Q)$ and subsets $\{U_i \subset R(T_i,T_i)\}_{i=1}^r$, 
with $U_i$ representing a subset of $\Ext(T_i,T_i)$, such that 
every indecomposable representation $M\in\cU$ has a $(T_i, U_i)$-normal form for some $1 \leq i \leq r$.
\item We say that a subset $\cU\subset\mathrm{Ind}(Q,\alpha)$ admits a \emph{cellular tree normal form} if it admits a tree normal form as in $(2)$ such that $\{(T_i,U_i)\mid i=1,\ldots,r\}$ is a mosaic of indecomposable representations.

\item  We say that $\alpha$ admits a (cellular) tree normal form if $\mathrm{Ind}(Q,\alpha)$ admits a (cellular) tree normal form.
\end{enumerate} 
\end{definition}

\begin{example}
This example is well-known but serves to illustrate the definitions.
Consider the Kronecker quiver $K(2)$ with arrows $a, b$. Furthermore, consider the tree module $T$ of $K(2)$ of dimension $(2,2)$ defined by the matrices $T_a=\begin{pmatrix}1&0\\0& 1\end{pmatrix}$ and $T_b=\begin{pmatrix}0&1\\0& 0\end{pmatrix}$ and 
\begin{equation}f=(f_a,f_b)=\left(\begin{pmatrix}0&0\\0& 0\end{pmatrix} ,\begin{pmatrix}1&0\\0& 1\end{pmatrix}\right)\in R(T,T).\end{equation}
Then $U_T=\langle f\rangle$ is strong and separating. Moreover, we have $\Ext(T,T)=\pi_{T,T}(U_T)$, but note that $\{f\}$ is not a tree-shaped basis. 
 
We additionally fix the tree module $S$ defined by $S_a=T_b$ and $S_b=T_a$ and fix the subspace $U_S=\{0\}\subset R(S,S)$. Then $\{(S,U_S),(T,U_T)\}$ is a mosaic of indecomposable representations which gives a cellular tree normal form for the root $(2,2)$.

Note that an analogous decomposition into affine cells is present for the dimension vector $(d,d)$ for $d\geq 2$. Furthermore, this shows that Kac's Conjecture 3 is true in this case as we have $a_{(d,d)}(q)=a_{(2,2)}(q)=q+1$.
\end{example}

This motivates the following conjecture.  It can be seen as a generalization of Kac's Conjecture 3 where a mechanism for constructing the cells is proposed.
We note that it is likely quite difficult.
\begin{conjecture}\label{conj:ctnf}
Let $Q$ be a quiver and $\alpha$ a root for $Q$.  Then $\alpha$ admits a cellular tree normal form, with $c_i$ cells of dimension $i$ in the notation of Section \ref{intro1}.
\end{conjecture}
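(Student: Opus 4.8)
The plan is to attempt an induction on the dimension vector $\alpha$ (ordered, say, by $\sum_q \alpha_q$), using the homological recursion of Section \ref{sec:homological} to build the cells and the non-negativity result of \cite{HLRV} that $a_\alpha(q)=\sum_i c_i q^i$ as the numerical target. For the base cases I would take the real Schur roots: the unique indecomposable is then exceptional, realizable as a tree module $T$ with $U=\{0\}$, giving a single $0$-dimensional cell consistent with $a_\alpha(q)=1$. The inductive hypothesis is that every root of strictly smaller total dimension admits a cellular tree normal form.

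For the inductive step I would fix $\alpha$ and run over decompositions $\alpha = d\cdot\udim M + \udim N$ with $M,N$ Schurian of strictly smaller dimension. The inductive hypothesis supplies cells of Schurian representations $(M,U_M)$ and $(N,U_N)$, and, choosing a universal subspace $U_{N,M}$, Theorem \ref{thm:cells} produces a mosaic $\{C_I\}$ whose middle terms $B_I$ are again tree modules by Lemma \ref{lem:treeextquiver}, with each $\iota_I(A_I)$ strong and separating via Proposition \ref{pro:KnPn}; Lemma \ref{tree-shaped-basis} guarantees the deformation directions assemble into tree-shaped bases so the tree structure propagates. To argue completeness I would appeal to Schofield-type induction \cite{Sch91}, which exhibits the generic indecomposable of dimension $\alpha$ as the middle term of precisely such an extension of exceptional (or Schurian) pieces; the cells $\{C_I\}$, ranging over the admissible decompositions, should then cover a dense subset of $\mathrm{Ind}(Q,\alpha)$.

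The crucial and most delicate part is the counting: one must orchestrate the recursion so that the resulting mosaic has exactly $c_i$ cells of dimension $i$. Here I expect the geometric side of Section \ref{sec:geometric} to be indispensable. Over $\C$, for indivisible $\alpha$ and generic $\Theta$, the Bia{\l}ynicki--Birula decomposition \cite{BB73} of $\mo$ relative to a generic one-parameter subgroup of $(\C^*)^{|Q_1|}$ yields affine cells indexed by torus-fixed points, whose attracting-dimensions furnish a cell-dimension statistic matching the Poincar\'e polynomial of the moduli space; the fixed points are graded representations that lift to tree modules, and the lifted cells are automatically strong and separating. The goal would be to reconcile these BB cell dimensions with the exponents $c_i$ of $a_\alpha(q)$, and then to extend the bookkeeping from the stable locus to all indecomposables, stratifying by Harder--Narasimhan type to handle the non-stable and divisible cases.

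The main obstacle, I expect, is twofold. First, simultaneously guaranteeing completeness (every indecomposable lands in some cell, not merely the generic ones) and the mosaic property (no isomorphism class is counted twice) while preserving the tree-module structure is not formal: Theorems \ref{thm:strong}, \ref{thm:separating} and \ref{thm:cells} only produce \emph{subfamilies} under hypotheses that must be verified and glued across the recursion. Second, and more seriously, there is no a priori reason the recursion realizes the correct Betti numbers $c_i$; matching the cell-dimension distribution to the coefficients of $a_\alpha(q)$ demands genuine cohomological input of purity/mixed-Hodge type as in \cite{HLRV}, and the imaginary roots — especially the isotropic ones, where the representation category is wild — are precisely where any uniform recursive-plus-geometric scheme is most likely to break down.
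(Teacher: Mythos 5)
The statement you have attempted to prove is not a theorem of the paper: it is Conjecture \ref{conj:ctnf}, which the authors introduce as a strengthening of Kac's Conjecture 3 and explicitly describe as ``likely quite difficult.'' The paper offers no proof of it; it only verifies special cases --- the subspace-quiver roots $\alpha(n)$ in Section \ref{sec:subspace}, isotropic Schur roots $\delta$ with $\mathrm{scl}(\delta)=1$ in Theorem \ref{thm:isotropic}, and the stable loci for certain roots of $K(2,1)$ and of the quiver $T(n)$ in Sections \ref{ex:extkronecker} and \ref{sec:extsub}. So there is no proof in the paper to compare yours against, and any claimed general proof must be scrutinized as a claimed solution to an open problem.

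Your proposal is, by your own admission in the last paragraph, a research program rather than a proof, and the two obstacles you name are exactly the reasons the conjecture is open; naming them does not remove them. Concretely: (i) completeness fails as stated, because Theorem \ref{thm:cells} (via Proposition \ref{pro:KnPn}) only produces indecomposables that are middle terms of extensions $\ses{M(\lambda)\otimes\kk^d}{B}{N(\mu)}$ with Schurian outer terms satisfying $\Hom(M(\lambda),N(\mu))=0$, and Schofield induction exhibits such presentations only for generic representations of suitable roots --- the non-generic and non-Schurian indecomposables are precisely what forces the paper's worked examples into ad hoc case analysis (e.g.\ the unstable indecomposables handled by Lemmas \ref{lem:finalexample1} and \ref{lem:finalexample2}, or the restriction-decomposable representations in Section \ref{sec:subspace}); (ii) nothing in the paper's machinery matches cell dimensions to the Kac coefficients $c_i$: the Bia{\l}ynicki-Birula input requires $\kk=\C$, $\alpha$ to be $\Theta$-coprime, and applies only to the stable locus of the moduli space, and even there Corollary \ref{cor:geomcells} must \emph{assume} that the lifted attractor map $\pi\mid_{\att(T_{\hat\alpha})}$ is an affine bundle, an assumption the authors explicitly flag as unresolved after Theorem \ref{thm:geomcells}. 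Since the inductive step neither covers all indecomposables nor controls the number of cells in each dimension, the induction does not close, and what you have written is essentially the paper's own program (homological recursion plus torus-action geometry) restated as a plan --- a reasonable plan, but not a proof of Conjecture \ref{conj:ctnf}.
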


Often it is possible to construct mosaics of indecomposables or even (cellular) tree normal forms recursively. In order to give an idea how to apply the results of Section \ref{sec:homological} in the setup of tree modules, we restrict to one special case which
is the $d=1$ case of Theorem \ref{thm:cells} for tree modules. 

\begin{theorem}\label{thm:treecells}
Let $S$ and $T$ be tree modules and $(S,U_S)$ and $(T,U_T)$ be cells of Schurian representations.  Suppose that $\Hom(T(\mu),S(\lambda))=0$ for all $(\lambda, \mu)\in U_S\times U_T$ and assume that $\cR_{S,T}=\{e_1,\ldots,e_n\}$ is a universal tree-shaped basis for $(U_T,U_S)$. Then there exist tree modules $B_1,\ldots,B_n$, which are the middle terms of the short exact sequences $\pi_{S,T}(e_i)$, and affine spaces $A_i\subset R(B_i,B_i)$ of dimension $i-1$ such that $\{(B_i,A_i)\mid i=1,\ldots,n\}$ is a mosaic of indecomposables. In particular, each representation $B_i(\nu)$ with $\nu\in A_i$ has a $(B_i,A_i)$-normal form.
\end{theorem}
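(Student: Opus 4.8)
The plan is to read this off from Theorem~\ref{thm:cells} in the case $d=1$, and then adjoin the single observation that turns it into a statement about tree modules. First I would match notation: put $M=T$, $N=S$, $U_M=U_T$, $U_N=U_S$, and $U_{S,T}:=\langle\cR_{S,T}\rangle\subseteq R(S,T)$. The hypotheses of Theorem~\ref{thm:cells} then hold: $(S,U_S)$ and $(T,U_T)$ are cells of Schurian representations, $\Hom(T(\mu),S(\lambda))=0$ for all $(\lambda,\mu)$, and the assumption that $\cR_{S,T}$ is a universal tree-shaped basis says exactly that $U_{S,T}$ satisfies the universality hypothesis with basis $\cR_{S,T}=\{e_1,\ldots,e_n\}$, so $n=\dim U_{S,T}$. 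Applying Theorem~\ref{thm:cells} with $d=1$ is therefore legitimate.

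With $d=1$ we have $\Gr_1(\kk^n)=\Pn^{n-1}$, whose Schubert decomposition consists of one cell $A_{(i)}$ for each pivot index $i=1,\ldots,n$, of dimension $i-1$ (the free entries sit in the columns $1,\ldots,i-1$ to the left of the pivot). Theorem~\ref{thm:cells} then yields, for each $i$, a cell $C_{(i)}=(B_i,A_i)$ of indecomposables, where $B_i$ is the middle term of $\pi_{S,T}(e_i)$ and $A_i$ is built from the Schubert cell $A_{(i)}$ together with the outer directions $U_S,U_T$; concretely, as recorded in Remark~\ref{rmk:grasscells}, its Schubert part is $\{\sum_{k=1}^{i-1}a_ke_k\mid a_k\in\kk\}$, giving $\dim A_i=i-1$ when the outer terms are not further deformed. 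The assertion that $\{(B_i,A_i)\}_{i=1}^n$ is a mosaic of indecomposables is precisely the mosaic conclusion of Theorem~\ref{thm:cells}, and the final sentence about $(B_i,A_i)$-normal forms is then immediate from Definition~\ref{def:tnf}(1).

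The only genuinely new point is that each $B_i$ is a tree module. Since $\cR_{S,T}$ represents a tree-shaped basis, each $e_i$ is a standard basis vector, say of the form $q\xrightarrow{a_0}q'$ with $q\in(\cB_S)_{s(a_0)}$ and $q'\in(\cB_T)_{t(a_0)}$. By Lemma~\ref{lem:treeextquiver}, the coefficient quiver of $B_i$ with respect to $\cB_S\coprod\cB_T$ is $\Gamma^{\cB_S}\coprod\Gamma^{\cB_T}$ with a single labeled arrow $q\xrightarrow{a_0}q'$ adjoined. As $S$ and $T$ are tree modules, $\Gamma^{\cB_S}$ and $\Gamma^{\cB_T}$ are trees; their disjoint union is a forest with two components, and adjoining one edge joining the two components produces a connected graph with exactly one fewer edge than vertices, i.e.\ a tree. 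Since $B_i$ is indecomposable (being part of the mosaic), it is a tree module. I expect this graph-theoretic step to be the main thing to verify, the only subtlety being that the new arrow genuinely connects the two components rather than closing a cycle; this is automatic because $e_i\in R(S,T)$ sends a basis vector of $S$ to one of $T$. All of the real work --- indecomposability and pairwise non-isomorphism --- is inherited from Theorem~\ref{thm:cells} (ultimately Proposition~\ref{pro:KnPn}), so nothing further needs to be redone here.
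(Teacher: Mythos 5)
Your proposal is correct and follows exactly the route the paper intends: the paper states Theorem~\ref{thm:treecells} without a separate proof, presenting it as the $d=1$ case of Theorem~\ref{thm:cells} (applied with the outer deformation directions suppressed, so that the cells $A_i$ are precisely the Schubert cells of $\Gr_1(\kk^n)=\Pn^{n-1}$ of dimensions $0,1,\ldots,n-1$), combined with the observation that the middle terms are tree modules. Your verification of that last point via Lemma~\ref{lem:treeextquiver} --- the coefficient quiver of $B_i$ is two trees joined by a single new edge, hence a tree, and $B_i$ is indecomposable by the mosaic conclusion --- is exactly the intended argument, so nothing is missing.
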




\begin{remark}
A tree module $T$ is clearly defined over $\Z$.
In this remark let us consider the situation where $R(T,T)_{\ZZ}$ is the corresponding product of matrix spaces over $\ZZ$, and we have $U_\ZZ \subseteq R(T,T)_\ZZ$ defined over $\ZZ$. For example, if $U$ is spanned by some standard basis vectors of the matrix space, the family $T(U_\ZZ):=\setst{T(\zl)}{\zl \in U_\ZZ}$ is represented in matrix form by replacing some 0s with $*$s of variable entries in the matrix form of $T$.  We can then base change to any field $\kk$ and ask whether the resulting space $U_\kk$ is strong and separating, or what is the largest subset which has these properties.  It would be particularly interesting to compare the results for $U_\CC$ and $U_{\F_q}$, but this seems to be a hard problem.\end{remark}

\subsection{Subspace quiver: an example}\label{sec:subspace}
We consider the $n$-subspace quiver $S(n)$ with vertices $q_0,q_1,\ldots,q_n$ and  arrows $a_i:q_i\to q_0$ for $i=1,\ldots,n$.
We consider the root $\alpha(n)=2q_0+\sum_{i=1}^nq_i$. Write $a_n(q):=a_{\alpha(n)}(q)$ for the Kac polynomial.  
\begin{theorem} Let $n\geq 3$. The root $\alpha(n)$ admits a cellular tree normal form over any field $\kk$. Moreover, $a_3(q)=1$ and for $n\geq 4$, we have
\begin{equation}a_n(q)=(q+1)a_{n-1}(q)+2^{n-2}-1.\end{equation}
\end{theorem}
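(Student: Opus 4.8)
The plan is to establish the two parts of the statement separately: first the cellular tree normal form for $\alpha(n)$, and second the recursion for the Kac polynomial $a_n(q)$. For the cellular tree normal form, I would proceed by induction on $n$, using the recursive machinery of Section~\ref{sec:homological}, specifically Theorem~\ref{thm:treecells}. The base case $n=3$ is classical: $\alpha(3)=2q_0+q_1+q_2+q_3$ corresponds to a tame situation (the three-subspace quiver is of type $\widetilde{D}_4$ when we add the central vertex, and $\alpha(3)$ is the positive imaginary root), so there is essentially a unique indecomposable up to isomorphism, giving $a_3(q)=1$, and it is realizable as a single tree module forming a one-element mosaic. For the inductive step, the key observation is that $q_n$ is a source with a single arrow $a_n\colon q_n\to q_0$, and $\alpha(n)=\alpha(n-1)+q_n$ (after identifying the common vertices). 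I would set $N=S_{q_n}$ (the simple at the new source) and let $M$ range over the indecomposables of dimension $\alpha(n-1)$ for the smaller quiver $S(n-1)$, viewed as representations of $S(n)$. Since $q_n$ is a source with $\Hom(S_{q_n},M)=\Hom(M,S_{q_n})=0$ and $\Ext(M,S_{q_n})=0$, the hypotheses of Theorem~\ref{thm:treecells} (with $d=1$, $n=\dim\Ext(S_{q_n},M)=\alpha_{q_0}=2$) are in reach, so each cell $(T_i,U_i)$ for $\alpha(n-1)$ produces new cells for $\alpha(n)$ as middle terms $B_j$ of the extensions $\pi_{S_{q_n},M}(e_j)$, which remain tree modules by Lemma~\ref{lem:treeextquiver}.

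The heart of the counting is understanding exactly how the cells multiply when passing from $n-1$ to $n$, and I expect this to be the main obstacle. Since $\dim\Ext(S_{q_n},M(\lambda))=\alpha_{q_0}=2$ for any indecomposable $M(\lambda)$ of dimension $\alpha(n-1)$ (both coordinates of $M$ at $q_0$ are ``seen'' by the new source), the map $\Gamma$ of Proposition~\ref{pro:KnPn} and Theorem~\ref{thm:treecells} gives a $\Gr_1(\kk^2)\cong\Pn^1$-family, i.e. a $\Pn^1$-worth of new indecomposables over each old cell, decomposing as two Schubert cells of dimensions $0$ and $1$. This accounts for the factor $(q+1)$ in front of $a_{n-1}(q)$. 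However, not every indecomposable of dimension $\alpha(n)$ arises this way: there are indecomposables for which the restriction to $S(n-1)$ is \emph{decomposable} or has smaller dimension at $q_0$, and these must be enumerated separately to produce the additive correction term $2^{n-2}-1$. I would analyze these exceptional representations by examining which proper subsets of the $n$ subspace-arrows are ``generic'' versus ``coincident'': the $2^{n-2}-1$ term should count configurations of the $n$ lines through the two-dimensional space $M_{q_0}=\kk^2$ in which exactly some specified nonempty proper subset of the subspaces coincide in a prescribed pattern, each contributing a single additional rigid cell.

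To make the recursion precise and verify the count, I would set up the Kac polynomial bookkeeping directly: $a_n(q)$ counts absolutely indecomposables over $\F_q$ of dimension $\alpha(n)$, and I would partition these by the isomorphism type of the generic behavior at $q_0$. The main family, where the representation restricts to an absolutely indecomposable of dimension $\alpha(n-1)$ with a choice of a line (fixed point data giving the $\Pn^1$), yields $(q+1)a_{n-1}(q)$. The remaining $2^{n-2}-1$ indecomposables are the rigid ``boundary'' cases whose coefficient quivers have all arrows landing in a single coordinate subspace or split into two coincident pencils; counting these reduces to a combinatorial enumeration of nonempty proper partitions of the arrow set compatible with indecomposability, which gives $2^{n-2}-1$. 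The cleanest verification would be to cross-check the resulting closed form against the known evaluation $a_n(1)=\sum_i c_i$ and against the number of cells produced by the mosaic constructed in the first part, thereby simultaneously confirming both the normal form and the polynomial recursion. The delicate point throughout is ensuring that the cells constructed via Theorem~\ref{thm:treecells} together with the finitely many exceptional cells are pairwise non-overlapping and exhaust $\mathrm{Ind}(S(n),\alpha(n))$, so that the mosaic is genuinely complete.
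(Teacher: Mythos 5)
Your plan follows the paper's proof essentially step for step: induction on $n$; inductive step realizing the new indecomposables as middle terms of extensions $0 \to M \to B \to S_{q_n} \to 0$ of the simple at the new source by the old indecomposables, via the cell machinery of Section \ref{sec:homological} (the paper invokes Theorems \ref{thm:strong} and \ref{thm:separating}, with disjoint supports killing the $\Theta$-maps, which is the same mechanism as your appeal to Theorem \ref{thm:treecells}); the factor $(q+1)$ coming from $\dim\Ext(S_{q_n},M)=2$ and the two Schubert cells of $\Gr_1(\kk^2)\cong\Pn^1$; and the correction term counting the indecomposables whose restriction to $S(n-1)$ is decomposable, enumerated by unordered nontrivial partitions of the old arrows into two coincident pencils with the new arrow in general position, giving $2^{n-2}-1$.

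There is, however, a genuine error in your base case. The quiver $S(3)$ (central sink plus three sources) is the $D_4$ Dynkin quiver, not $\widetilde{D}_4$, and $\alpha(3)=2q_0+q_1+q_2+q_3$ is its highest root: a \emph{real, exceptional} root. It is not the positive imaginary root of a tame quiver, and this is not a harmless misattribution: an isotropic imaginary root of a tame quiver carries a one-parameter family of indecomposables, so its Kac polynomial is non-constant, contradicting the very conclusion $a_3(q)=1$ that you draw. The correct argument (and the paper's) is that $\alpha(3)$ is exceptional, hence has a unique indecomposable, which is a tree module, giving a single zero-dimensional cell. Two smaller points you should also repair: (i) to pass from counting points of cells over $\F_q$ to the Kac polynomial you must know that indecomposable over $\F_q$ coincides with absolutely indecomposable; the paper deduces this from coprimality of $\alpha(n)$ (an indecomposable that is not absolutely indecomposable would decompose over $\overline{\F_q}$ into summands of equal dimension vector, impossible here), whereas your bookkeeping paragraph silently conflates the two notions. (ii) The dichotomy in the inductive step is simply whether the restriction of $M$ to $S(n-1)$ is indecomposable or decomposable; your alternative "or has smaller dimension at $q_0$" cannot occur, since restriction to the subquiver does not change the vector space at $q_0$.
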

\begin{proof}
We first give a cellular tree normal form for $\alpha(n)$ by induction on $n$, then compute the Kac polynomial by specializing to $\kk=\F_q$ and using the dimensions of the cells.
If $n=3$, the root $\alpha(3)$ is exceptional which means that there exists precisely one indecomposable $T_1^3$ up to isomorphism.  It is given by the matrices
\begin{equation}((T^3_1)_{a_1},(T^3_1)_{a_2},(T^3_1)_{a_3})=\left(\begin{pmatrix}1\\0\end{pmatrix},\begin{pmatrix}0\\1\end{pmatrix},\begin{pmatrix}1\\1\end{pmatrix}\right).\end{equation}
Thus $a_{3}(q)=1$ and $\alpha(3)$ admits a cellular tree normal form with a single cell of dimension 0. 

Let us assume that we constructed a cellular tree normal form for $\alpha(n)$. Thus there exist tree modules $T^n_1,\ldots,T^n_{a_n(1)}$ and affine subspaces $U^n_i\subset R(T^n_i,T^n_i)$ representing subspaces of $\Ext(T^n_i,T^n_i)$ such that $\{(T^n_i,U^n_i)\mid i=1,\ldots, a_n(1)\}$ is a mosaic of indecomposable representations giving a cellular tree normal form for $\alpha(n)$.

Now let $S_{n+1}$ be the simple representation corresponding to $q_{n+1}$.
For all $i$, we have that
\begin{equation}\cR= \left\{e_1 =\begin{pmatrix}1\\0\end{pmatrix},e_2=\begin{pmatrix}0\\1\end{pmatrix}\right\} \subset \Hom_{\kk}((S_{n+1})_{q_{n+1}}, (T_i^n)_{q_0})= R(S_{n+1},T^n_{i}) \end{equation}
represents a tree-shaped basis of $\Ext(S_{n+1},T^n_{i})$. This basis is clearly universal for $(\{0\}, U^n_i)$.  
Let $T_{i,1}^{n+1}$ and $T_{i,2}^{n+1}$ be the middle terms of the short exact sequences represented by $e_1, e_2\in R(S_{n+1},T_i^n)$ respectively.

It is straightforward to check that the hypotheses of Theorem \ref{thm:strong} and \ref{thm:separating} hold where we take $M=T^n_i$ and $N=S_{n+1}$ (the $\Theta$ maps are all zero since $T^n_i$ and $S_{n+1}$ have disjoint support).

This yields two strong and separating cells $U_{i,1}^{n+1}=\{0\} \times U_i^n\subset R(T_{i,1}^n,T_{i,1}^n)$ and $U_{i,2}^{n+1}=\langle e_1\rangle \times U_i^n\cong U_i^n\subset R(T_{i,2}^n,T_{i,2}^n)$, see also Theorem \ref{thm:cells}. This gives two cells of indecomposables whose points have a $(T_{i,j}^{n+1},U_{i,j}^{n+1})$-normal form for $j=1,2$. By induction assumption, every indecomposable representation of $S(n+1)$ of dimension vector $\alpha(n+1)$ which restricts to an indecomposable of $S(n)$ arises in this way.

In the case $\kk=\F_q$, as $\alpha(n)$ is coprime, the absolutely indecomposable representations coincide with the indecomposable representations. This follows from \cite[Section 1.14]{Ka83} as an indecomposable representation over $\F_q Q$ which is not absolutely indecomposable decomposes into a direct sum of absolutely indecomposable representations with the same dimension vector over $\overline{\F_q} Q$.
Therefore our considerations show that there exist $(q+1)a_{n}(q)$ absolutely indecomposable representations of dimension $\alpha(n+1)$ over $\F_q$ which restrict to an indecomposable representation of dimension $\alpha(n)$, in other words those kind of representations contribute $(q+1)a_{n}(q)$ to the Kac polynomial $a_{n+1}(q)$.  

For the remaining indecomposable representations of dimension $\alpha(n+1)$ of $S(n+1)$, the restriction $M|_{S(n)}$ to $S(n)$ is decomposable. Let $M$ be such an indecomposable. As at least three of the subspaces $M_{a_i}(M_{q_i})\subset M_{q_0}$ for $i=1,\ldots, n+1$ need to be different, there exists a nontrivial partition $I\coprod J=\{1,\ldots,n\}$ such that $M|_{S(n)}$ has the following matrix presentation (up to isomorphism)
\begin{equation}M_{a_{i}}=\begin{pmatrix}1\\0\end{pmatrix}\text{ for all }i\in I,\,M_{a_{j}}=\begin{pmatrix}0\\1\end{pmatrix}\text{ for all }j\in J.\end{equation}
As $M$ is indecomposable, this gives (again up to isomorphism) \begin{equation}M_{a_{n+1}}=\begin{pmatrix}1\\1\end{pmatrix}.\end{equation}
As the number of such partitions is $2^{n-1}-1$ and as every such indecomposable representation gives an affine cell of dimension zero, this gives the contribution $2^{n-1}-1$ to $a_{n+1}(q)$, completing the induction for this formula. 
Moreover, the coefficient quiver of $M$ in this basis is a tree.  Thus we obtain a cellular tree normal form for $\alpha(n+1)$, completing the induction for that claim.
\end{proof}
The case $n=4$ is treated in Example \ref{ex:subspace n4}.
We also note that computation of the Kac polynomial for this example is considered as an example of general methods unrelated to ours in \cite{GLRV}. Cell decompositions and normal forms are not considered there.



\section{Construction of families of indecomposables via geometric methods}\label{sec:geometric}
\noindent In this section, we assume that $Q$ is an acyclic quiver and $\kk=\CC$. We consider moduli spaces of stable representations together with a torus action. The resulting Biay{\l}nicki-Birula decomposition can be used to associate an affine space in the moduli space with every torus fixed point. We lift this cell to the representation variety, which then can be understood as a subspace of deformations of the torus fixed point. We show that a subgroup of the general linear group acts on the lifted attracting cell. As this action is much easier to handle as the action of the general linear group, this can often be used to construct a cell of stable representations around each torus fixed point.

\subsection{Moduli spaces}\label{sec:moduli}
For an introduction to the theory of moduli spaces of quiver representations we refer to \cite{Kin94,Reineke:2008fk}.
 We choose a vector $\Theta\in\Z^{Q_0}$ and define a linear form $\Theta\in\mathrm{Hom}(\Z ^{Q_0},\Z)$ by $\Theta(\alpha)=\sum_{q\in Q_0}\Theta_q\alpha_q$. This gives rise to a slope function $\mu:\ZZ_{\geq 0}^{Q_0}\backslash\{0\}\rightarrow\Q$ by \begin{equation}\mu(\alpha)=\frac{\Theta(\alpha)}{\dim(\alpha)}\end{equation}
where $\dim(\alpha)=\sum_{q\in Q_0}\alpha_q$. For a representation $M$ of the quiver $Q$, we define $\mu(M):=\mu(\udim M)$. The representation $M$ is called (semi-)stable if the slope (weakly) decreases on proper nonzero subrepresentations. For a fixed slope function as above, we denote by $R^{\Theta-\mathrm{sst}}_\alpha(Q)$ the set of semistable points and by $R^{\Theta-\mathrm{st}}_\alpha(Q)$ the set of stable points in $R_\alpha(Q)$. Following \cite{Kin94}, there exist moduli spaces $\mo$ (resp. $M^{\Theta-\mathrm{sst}}_\alpha(Q)$) of stable (resp. semistable) representations parametrizing isomorphism classes of stable (resp. polystable) representations. If $Q$ is acyclic and $M^{\Theta-\mathrm{st}}_\alpha(Q)\neq \emptyset$, it is a smooth irreducible variety of dimension $1-\langle \alpha,\alpha\rangle$. Moreover, it is projective if semistability and stability coincide. Recall that this is the case if $\alpha$ is $\Theta$-coprime, i.e. if we have $\mu(\beta)\neq\mu(\alpha)$ for all dimension vectors $0\neq\beta<\alpha$. 

In the following, we denote the quotient morphism by $\pi^\Theta_\alpha:R^{\Theta-\mathrm{st}}_\alpha(Q)\to\mo$ or just by $\pi$ if we fixed a dimension vector and a stability. 

\subsection{Universal abelian covering quiver}\label{univcover}
For an introduction to covering theory we refer to \cite{Gab81,Green83}. Let $A_Q$ be the free abelian group generated by $Q_1$, writing $e_a$ for the basis vector of $A_Q$ corresponding to an arrow $a\in Q_1$.


\begin{definition}
The \emph{universal abelian covering quiver} $\hat Q$ of $Q$ has vertex set $\hat Q_0=Q_0\times A_{Q}$ and arrow set $\hat Q_1=Q_1\times A_{Q}$.  The source and target of an arrow in $\hat Q$ are $(s(a),\chi)\xrightarrow{(a,\chi)} (t(a),\chi+e_a)$.

We say that a representation $M\in \rep(Q)$ \emph{can be lifted to $\hat Q$} if there exists a representation $\hat M\in\rep(\hat Q)$ such that $F_Q\hat M=M$ where $F_Q$ is the natural pushdown functor.
\end{definition}

Note that in our definition every connected component of $\hat Q$ is a covering in the sense of \cite{Gab81}. The functor $F_Q$ induces a map $F_Q:\ZZ_{\geq 0}^{\hat Q_0}\to \ZZ_{\geq 0}^{Q_0}$. We say that a dimension vector $\hat\alpha$ is \emph{compatible} with $\alpha$ if $F_Q(\hat\alpha)=\alpha$. The group $A_Q$ acts on $\hat Q$ via translation inducing an action on $\rep(\hat Q)$ and on $\ZZ_{\geq 0}^{\hat Q_0}$. We say two representations are \emph{equivalent} if they lie in the same orbit under this action. If $M$ is a representation of $\hat Q$, we denote the representation obtained by translation by $\chi\in A_Q$ by $M_\chi$. 
The following is straightforward:

\begin{lemma}
Every tree module can be lifted to the universal abelian covering quiver.\end{lemma}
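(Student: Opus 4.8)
The plan is to show that a tree module $M$ with tree basis $\cB$ can be lifted to $\hat Q$ by exploiting the tree structure of its coefficient quiver $\Gamma^M$ to assign consistent ``heights'' in $A_Q$ to the basis vectors. Recall that the vertices of $\hat Q$ are pairs $(q,\chi) \in Q_0 \times A_Q$, and a lift $\hat M$ amounts to placing each basis vector $b \in \cB_q$ at some vertex $(q, \chi(b))$ of $\hat Q$ in a way compatible with the arrows. The key observation is that the nonzero matrix entries of $M$, being exactly $\dim_\kk M - 1$ in number and forming a tree, impose a system of constraints on the labels $\chi(b)$ that is precisely solvable because a tree has no cycles.

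First I would fix a tree basis $\cB$ and its coefficient quiver $F^M \colon \Gamma^M \to Q$, whose underlying graph is a tree by hypothesis. Each arrow of $\Gamma^M$ is of the form $b' \xto{a} b$ with $b' \in \cB_{s(a)}$, $b \in \cB_{t(a)}$, and label $a \in Q_1$. For the pushdown relation $F_Q \hat M = M$ to hold, if $b'$ sits over the vertex $(s(a), \chi')$ of $\hat Q$ then $b$ must sit over $(t(a), \chi' + e_a)$, since the lifted arrow $(a,\chi')$ of $\hat Q$ goes from $(s(a),\chi')$ to $(t(a), \chi' + e_a)$. Thus each edge of the tree $\Gamma^M$ prescribes the difference of the $A_Q$-labels of its two endpoints. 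I would then fix any root vertex $b_0$ of the tree, set $\chi(b_0) = 0$, and define $\chi(b)$ for every other basis vector by summing the signed arrow contributions $\pm e_a$ along the unique path from $b_0$ to $b$ in the tree. The sign is $+e_a$ when the path traverses the arrow in its given orientation and $-e_a$ when against it.

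Because the underlying graph is a tree, this path is unique, so $\chi$ is well-defined with no consistency obstruction; this is exactly where the tree hypothesis does all the work. Setting $\hat M_{(q,\chi)}$ to be the span of those $b \in \cB_q$ with $\chi(b) = \chi$, and letting the arrow maps of $\hat M$ be the restrictions of the corresponding entries of $M$, one checks directly that $F_Q \hat M = M$: the nonzero structure constants $c^a_{b,b'}$ of $M$ are reproduced arrow-by-arrow, and the matching of heights guarantees each nonzero entry lands on a genuine arrow of $\hat Q$. I expect the only genuine content to be this path-sum definition and its well-definedness; the verification that $\hat M$ is a representation and pushes down to $M$ is a routine unwinding of definitions, since each structure constant of $M$ contributes to exactly one arrow of $\hat Q$ by construction. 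The main (very mild) obstacle is purely bookkeeping: orienting the tree edges correctly so that the height differences $\pm e_a$ are assigned with consistent signs, which is automatic once a root is fixed and paths are traversed coherently.
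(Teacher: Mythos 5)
Your proof is correct, and it is essentially the argument the authors have in mind: the paper states this lemma without proof, labeling it ``straightforward,'' and your path-sum construction (rooting the tree, propagating heights $\chi(b)\in A_Q$ along unique paths with signs $\pm e_a$, then checking that every arrow $b'\xto{a}b$ of $\Gamma^M$ forces $\chi(b)=\chi(b')+e_a$ so that each weight space $\hat M_{(q,\chi)}=\langle b\in\cB_q : \chi(b)=\chi\rangle$ is mapped correctly) is exactly the standard argument filling that gap. The one hypothesis worth flagging explicitly is connectedness of $\Gamma^M$, which the paper builds into its definition of a tree and which your root-and-path construction tacitly uses.
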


The following result uses that $F_Q$ is a covering functor when restricting to one of the connected components of $\hat Q$.
\begin{theorem}\label{covering}
The functor $F_Q$ preserves indecomposability. Moreover, for all representations $\hat M,\hat N \in\rep(\hat Q)$, we have 
\begin{equation}\Hom_Q(F_Q\hat M, F_Q\hat N)\cong \bigoplus_{\chi\in A_Q}\Hom_{\hat Q}(\hat M_\chi,\hat N)\cong\bigoplus_{\chi\in A_Q}\Hom_{\hat Q}(\hat M,\hat N_\chi).\end{equation}
The analogous statement is true when replacing $\Hom$ by $\Ext$.
\end{theorem}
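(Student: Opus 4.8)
The plan is to prove the theorem about the covering functor $F_Q$ by reducing everything to standard facts from covering theory of quivers, exploiting the fact that $F_Q$ restricted to any single connected component of $\hat Q$ is a covering functor in the sense of Gabriel \cite{Gab81}. First I would recall the setup: for representations $\hat M, \hat N \in \rep(\hat Q)$, the pushdown functor $F_Q$ sends a representation of $\hat Q$ to the representation of $Q$ whose value at $x \in Q_0$ is $\bigoplus_{\chi} \hat M_{(x,\chi)}$, with the arrow maps assembled in the evident way. The key structural input is that the vector spaces underlying $F_Q \hat M$ at each vertex of $Q$ decompose as direct sums indexed by the fiber $F_Q^{-1}(x) = \{x\} \times A_Q$.

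The heart of the argument is the $\Hom$-isomorphism, which I would establish by a direct analysis of morphisms. A morphism $\phi \colon F_Q\hat M \to F_Q \hat N$ in $\rep(Q)$ is, at each $x \in Q_0$, a linear map $\bigoplus_{\chi} \hat M_{(x,\chi)} \to \bigoplus_{\psi} \hat N_{(x,\psi)}$, so it decomposes into components $\phi^{\psi}_{\chi}$. The compatibility of $\phi$ with the arrow maps of $Q$ translates, after matching up the $A_Q$-gradings coming from the labeling $(s(a),\chi) \xrightarrow{(a,\chi)} (t(a),\chi+e_a)$, into the statement that the ``diagonal shifts'' $\{\phi^{\psi}_{\chi}\}_{\psi - \chi = \eta}$ assemble into a morphism $\hat M_{-\eta} \to \hat N$ in $\rep(\hat Q)$, for each fixed $\eta \in A_Q$. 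This is exactly the bookkeeping that produces the isomorphism $\Hom_Q(F_Q\hat M, F_Q\hat N) \cong \bigoplus_{\chi \in A_Q} \Hom_{\hat Q}(\hat M_\chi, \hat N)$; the second isomorphism in the displayed formula follows by reindexing the sum, since translation by $\chi$ is an automorphism of $\hat Q$ and $\Hom_{\hat Q}(\hat M_\chi, \hat N) \cong \Hom_{\hat Q}(\hat M, \hat N_{-\chi})$. I expect the careful index-matching in this step to be the main obstacle — not because it is deep, but because one must verify that the arrow-compatibility condition really does decouple along the $A_Q$-grading, and that the sum on the right is finite (which holds because $\hat M, \hat N$ are finite-dimensional, so only finitely many fibers are nonzero).

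For the $\Ext$-statement, I would use the description of $\Ext$ as the cokernel of the map $d_{N,M}$ from Section \ref{sec:extensions}. Since $F_Q$ takes the spaces $\bigoplus_{q}\Hom_\kk(N_q,M_q)$ and $R(N,M)$ to the corresponding spaces for $\hat Q$ in a manner compatible with the maps $d_{N,M}$ and $d_{\hat N, \hat M}$, the same grading decomposition by $\eta \in A_Q$ applies simultaneously to the domain and codomain of these defining maps. Taking cokernels is exact and commutes with the finite direct sum decomposition, so the $\Hom$-isomorphism propagates to an $\Ext$-isomorphism by the five lemma applied to the defining four-term exact sequences, or more simply by noting that $d_{F_Q\hat N, F_Q\hat M}$ is the direct sum over $\eta$ of the maps $d_{\hat N_{-\eta}, \hat M}$.

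Finally, I would deduce that $F_Q$ preserves indecomposability from the $\Hom$-isomorphism applied with $\hat M = \hat N$. If $\hat M$ is indecomposable, then $\End_{\hat Q}(\hat M) = \Hom_{\hat Q}(\hat M, \hat M)$ is local. The isomorphism gives $\End_Q(F_Q \hat M) \cong \bigoplus_{\chi} \Hom_{\hat Q}(\hat M_\chi, \hat M)$, and I would argue that the summand $\chi = 0$ contributes the local ring $\End_{\hat Q}(\hat M)$ while the remaining summands consist of non-isomorphisms, so that any idempotent of $\End_Q(F_Q\hat M)$ must have its ``diagonal'' $\chi = 0$ component an idempotent of a local ring, hence $0$ or $1$; a short nilpotency argument (using that the off-diagonal summands are nilpotent in the appropriate sense, as $\hat M_\chi \ncong \hat M$ for $\chi \neq 0$ when $\hat M$ is indecomposable, or more carefully that such morphisms cannot be invertible) then forces the idempotent to be $0$ or $\id$. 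This shows $\End_Q(F_Q\hat M)$ has no nontrivial idempotents, hence $F_Q\hat M$ is indecomposable by the characterization recalled before Lemma \ref{lem:indecomposable1}.
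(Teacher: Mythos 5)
Your overall route is sound, and it is worth noting for context that the paper itself gives no argument for this theorem: it states it after remarking that $F_Q$ restricts to a covering functor on each connected component of $\hat Q$, and leans on the covering theory of \cite{Gab81,Green83}. What you have written is essentially an unpacking of that standard proof. Your treatment of the $\Hom$-isomorphism is correct: for an arrow $a$ the compatibility condition relates the components $\phi^{\psi}_{\chi+e_a}$ and $\phi^{\psi-e_a}_{\chi}$, which have the same shift $\psi-\chi-e_a$, so the conditions decouple along the $A_Q$-grading, and finite-dimensionality of $\hat M,\hat N$ turns the a priori product into a direct sum. The $\Ext$ part also goes through as you say: the map $d_{F_Q\hat N,F_Q\hat M}$ preserves the shift decomposition of both its domain and codomain and restricts on the shift-$\eta$ piece to (a translate of) $d_{\hat N,\hat M}$, so the cokernels decompose accordingly; no five lemma is even needed.

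The one step that fails as literally stated is in the indecomposability argument. Writing $E:=\End_Q(F_Q\hat M)=\bigoplus_\chi E_\chi$ with $E_\chi=\Hom_{\hat Q}(\hat M_\chi,\hat M)$, the degree-zero component $e_0$ of an idempotent $e=\sum_\chi e_\chi$ is \emph{not} in general an idempotent of $E_0=\End_{\hat Q}(\hat M)$: the degree-zero component of $e^2$ is $e_0^2$ plus all cross terms $e_{-\chi}\,e_\chi$ with $\chi\neq 0$, which need not vanish. The standard repair runs as follows. First, the $A_Q$-action is free on nonzero finite-dimensional representations (a finite nonempty support cannot be invariant under a nontrivial translation), so $\hat M_\chi\ncong\hat M$ for $\chi\neq 0$; hence each cross term factors through $\hat M_\chi$ and cannot be an isomorphism, i.e. it lies in $\rad E_0$. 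Thus $e_0$ is idempotent only \emph{modulo} $\rad E_0$, so its image in the division ring $E_0/\rad E_0$ is $0$ or $1$. Second, one checks that $J:=\rad E_0\oplus\bigoplus_{\chi\neq 0}E_\chi$ is a nilpotent ideal: a homogeneous element of nonzero degree is nilpotent because the degrees of its powers are pairwise distinct while $E$ is supported on finitely many degrees, and for sums one uses that any degree-zero product of homogeneous elements of $J$ lies in $\rad E_0$ together with a pigeonhole argument on partial sums of degrees and the nilpotency of $\rad E_0$. Then $E/J\cong E_0/\rad E_0$ is a division ring, so $E$ is local; equivalently, $e\in J$ or $1-e\in J$, and an idempotent in a nil ideal is zero, forcing $e=0$ or $e=\id$. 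Your sketch gestures at exactly these ingredients ("non-isomorphisms", "nilpotent in the appropriate sense"), but the claim that $e_0$ itself is an idempotent of a local ring is the point that must be replaced by the mod-radical argument; with that patch your proof is complete and is, in substance, the proof the paper is citing.
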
 


\subsection{Torus action on moduli spaces}\label{sec:torusaction}

Let the torus $\T:=(\C^\ast)^{|Q_1|}$ act on $R_\alpha(Q)$ by
\begin{equation}t.M=(t_a)_{a\in Q_1}.(M_a)_{a\in Q_1}:=(t_aM_a)_{a\in Q_1}.\end{equation}
This action commutes with the base change action of $\Gl_\alpha:=\prod_{q\in Q_0}\mathrm{Gl}_{\alpha_q}(\C)$  on $R_\alpha(Q)$ given by \begin{equation}g\ast M:=(g_{t(a)}M_ag_{s(a)}^{-1})_{a\in Q_1}.\end{equation} As the $\T$-action preserves the submodule lattice, it also preserves stability, so this induces a $\T$-action on the moduli space $M^{\Theta-\mathrm{st}}_\alpha(Q)$.
 
We recall some results from \cite[Section 3]{Wei13} which are important for our purposes. Let $\mathrm{PGl}_\alpha=\mathrm{Gl}_\alpha/\C^\ast$, where $\C^* \vartriangleleft \Gl_\alpha$ is the normal subgroup $\{(\lambda\id_{\alpha_q})_{q \in Q_0} \mid \lambda \in \C^*\}$. For every $\T$-fixed point $\overline T\in\mo$, we can choose a representative - also called a \emph{lift} in what follows - $T\in R^{\Theta-\mathrm{st}}_\alpha(Q)$. Every such lift gives rise to a unique homomorphism of algebraic groups $\phi:\T\to \mathrm{PGl}_\alpha$ such that 
\begin{equation}\phi(t)\ast T=t.T.\end{equation}
 For $\phi$ we can choose a lift $\psi:\T\to \Gl_\alpha$ which is unique up to a character $\chi:\T\to\C^\ast$. Every such lift $\psi$ can be decomposed as $\psi=(\psi_q)_{q\in Q_0}$ and gives rise to a weight space decomposition
\begin{equation}T_q=\bigoplus_{\chi\in X(\T)} (T_q)_\chi\end{equation}
for every $q\in Q_0$. Here $X(\T)\cong\Z^{Q_1}$ denotes the character group. Furthermore, we have $T_a(T_{s(a),\chi})\subseteq T_{t(a),\chi+e_a}$ for each $a\in Q_1$. Thus, $T$ defines a $\hat\Theta$-stable representation of the universal abelian covering quiver $\hat Q$ as defined in Section \ref{univcover}. Here the linear form $\hat\Theta\in\Hom(\Z^{Q_0},\Z)$ is defined by $\hat\Theta_{(q,\chi)}=\Theta_q$ for all $q\in Q_0,\,\chi\in A_Q$. Note that a change of the lift $\psi$ by $\chi$ corresponds to a translation of the representation in the universal abelian covering quiver.

The other way around, every $\hat\Theta$-stable representation $T\in R_{\hat\alpha}(\hat Q)$ defines a torus fixed point of $\mo$ if $\hat\alpha$ is compatible with $\alpha$. Following \cite[Section 3.2]{Wei13}, for $\psi_T:\T\to\Gl_\alpha$ defined by
\begin{equation}\label{eq:normalform}(\psi_T)_q(t)(x_{(q,\chi)})=\chi(t)x_{(q,\chi)}\end{equation}
for each $t\in\T$ and $x_{(q,\chi)}\in T_{(q,\chi)}$, we have $\psi_T(t)\ast T=t.T$. Thus $T$ is indeed a fixed point. In \cite[Theorem 3.8]{Wei13} it is shown:

\begin{theorem}
The set of torus fixed points $M^{\Theta-\mathrm{st}}_\alpha(Q)^\T$ is isomorphic to the disjoint union of moduli spaces\
\begin{equation*}\coprod_{\hat \alpha}M^{\hat\Theta-\mathrm{st}}_{\hat{\alpha}}(\hat Q)\end{equation*}
where $\hat\alpha$ ranges over all equivalence classes of dimension vectors compatible with $\alpha$. 
\end{theorem}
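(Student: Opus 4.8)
The plan is to assemble the point-set correspondence established in the paragraphs preceding the theorem into an isomorphism of varieties. First I would define the map from right to left: fixing one representative $\hat\alpha$ in each equivalence class of dimension vectors compatible with $\alpha$, send a $\hat\Theta$-stable representation $\hat T$ of $\hat Q$ of dimension $\hat\alpha$ to the isomorphism class of its pushdown $F_Q \hat T \in \ros$. The explicit cocharacter $\psi_T$ of \eqref{eq:normalform} exhibits $F_Q\hat T$ as a $\T$-fixed point, so the image lies in $\mo^\T$, and pushing down is algebraic, so this is a morphism. Translating $\hat T$ by $\chi \in A_Q$ leaves the pushdown unchanged; this is precisely why the disjoint union is indexed by equivalence classes rather than by all compatible $\hat\alpha$, since translation identifies $M^{\hat\Theta-\mathrm{st}}_{\hat\alpha}(\hat Q)$ with $M^{\hat\Theta-\mathrm{st}}_{\hat\alpha_\chi}(\hat Q)$ and sends both to the same locus of fixed points.

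Conversely, given a $\T$-fixed point $\overline T$, the weight-space decomposition attached to a lift $\psi$ (as in the discussion above) produces a representation of $\hat Q$ of some compatible dimension vector, well defined up to the translation action of $A_Q$ and hence a well-defined point of the disjoint union over equivalence classes. By construction these two assignments are inverse to one another on points, so the remaining content is (i) matching the stable loci and (ii) checking that the resulting bijection is an isomorphism of varieties.

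The crux is the stability comparison: $F_Q\hat T$ is $\Theta$-stable exactly when $\hat T$ is $\hat\Theta$-stable. Since $\hat\Theta_{(q,\chi)} = \Theta_q$ and $F_Q$ sums the dimensions over the fibers, the slope of $\hat T$ equals that of $F_Q\hat T$, and likewise for every subrepresentation respecting the weight decomposition; such weight-graded subrepresentations of $F_Q\hat T$ are exactly the pushdowns of subrepresentations of $\hat T$. The key observation is that any proper destabilizing subrepresentation of a $\T$-fixed point may be taken weight-graded: degenerating it under a generic one-parameter subgroup of $\T$ yields a weight-graded subrepresentation of the same dimension vector, hence of the same slope. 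Thus destabilizing subobjects correspond on the two sides, and together with Theorem \ref{covering} this matches the stable points.

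The step I expect to be the main obstacle is upgrading this bijection of points to an isomorphism of varieties. For this I would argue with families: the pushdown functor acts on flat families and is $\Gl_\alpha$-equivariant, so it descends through the GIT quotients to a morphism $\coprod_{\hat\alpha} M^{\hat\Theta-\mathrm{st}}_{\hat\alpha}(\hat Q) \to \mo^\T$, where the target is given its natural scheme structure as the fixed locus of $\T$ acting on the smooth variety $\mo$. One then produces the inverse morphism on families via the canonical weight decomposition, or checks bijectivity of tangent maps using the $\Ext$-comparison of Theorem \ref{covering}; keeping track of the scheme structure of the fixed locus, rather than merely its reduced points, is the delicate point.
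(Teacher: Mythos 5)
This theorem is not actually proved in the paper: it is imported wholesale from \cite[Theorem 3.8]{Wei13} (the text preceding it reads ``In [Wei13, Theorem 3.8] it is shown''), and the paragraphs of Section \ref{sec:torusaction} before the statement only record the pointwise dictionary --- lift a fixed point, choose $\psi$, read off the weight-space decomposition, and conversely use \eqref{eq:normalform} to see that a $\hat\Theta$-stable representation of $\hat Q$ pushes down to a fixed point. So there is no internal proof to compare yours against; judged on its own, your proposal is essentially a correct reconstruction of the argument that \cite{Wei13} carries out. In particular, the step you isolate as the crux is the right one, and your argument for it is sound: if $U$ is a subrepresentation of a fixed point $T$, then $U$ is also a subrepresentation of $t.T=\psi(t)\ast T$, so $\psi(t)^{-1}U$ is again a subrepresentation of $T$; thus $\T$ acts on the projective quiver Grassmannian, the limit of $U$ along a generic one-parameter subgroup $\gamma$ of $\T$ is a subrepresentation of the same dimension vector (hence the same slope) which is homogeneous for the full $\T$-weight decomposition (genericity of $\gamma$ ensures that $\psi\circ\gamma$-homogeneous subspaces coincide with weight-graded ones), and weight-graded subrepresentations of $F_Q\hat T$ are exactly the pushdowns of subrepresentations of $\hat T$, with slopes matching because $\hat\Theta_{(q,\chi)}=\Theta_q$.

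Two places you could tighten. First, injectivity on points is cleanest via Theorem \ref{covering}: an isomorphism $F_Q\hat T\cong F_Q\hat T'$ decomposes as a sum of maps $\phi_\chi\in\Hom_{\hat Q}(\hat T_\chi,\hat T')$, and since all $\hat T_\chi$ and $\hat T'$ are stable of equal slope, any nonzero $\phi_\chi$ is an isomorphism; as not all can vanish, $\hat T'$ is a translate of $\hat T$. Second, the scheme-theoretic worry at the end is milder than you suggest: $\mo$ is smooth because $\PGl_\alpha$ acts freely on $\ros$, so in characteristic zero the $\T$-fixed locus is automatically smooth (in particular reduced), and your tangent-space comparison does close the argument --- the $\T$-invariant part of $\Ext_Q(F_Q\hat T,F_Q\hat T)\cong\bigoplus_{\chi}\Ext_{\hat Q}(\hat T_\chi,\hat T)$ is the summand $\Ext_{\hat Q}(\hat T,\hat T)$, which is the tangent space of $M^{\hat\Theta-\mathrm{st}}_{\hat\alpha}(\hat Q)$ at $[\hat T]$, so your bijective morphism is \'etale and therefore an isomorphism.
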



\subsection{Bia{\l}ynicki-Birula decomposition for moduli spaces}\label{birula}
We fix the following assumption for the remainder of this section.
\begin{assumption}
Assume that $\alpha\in\ZZ_{\geq 0}^{Q_0}$ is $\Theta$-coprime so that the moduli space $\mo$ is smooth and projective, as discussed in Section \ref{sec:moduli}.
\end{assumption}

 Let $Z$ be a smooth projective variety with a $\C^\ast$-action. For a connected component of the fixed point set $C\subset Z^{\C^\ast}$, we define its attracting set as
\begin{equation}\mathrm{Att}(C):=\{y\in Z\mid \lim_{t\to 0}t.y\in C\}\end{equation}

Then we have the following statement \cite[Section 4]{BB73}:
\begin{theorem}\label{bb}Let $\coprod_{i=1}^r C_i= Z^{\C^\ast}$ be the decomposition into connected components. Then $\mathrm{Att}(C_i)$ is a locally closed smooth $\C^\ast$-invariant subvariety of $Z$ whence $C_i$ is a subvariety of $\mathrm{Att}(C_i)$. Moreover, we have $Z=\coprod_{i=1}^r\mathrm{Att}(C_i)$ and
the natural map $\gamma_i:\mathrm{Att}(C_i)\to C_i$ is an affine bundle.

\end{theorem}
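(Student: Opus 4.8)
The plan is to follow the classical argument of \cite{BB73}, reducing everything to a local analysis near the fixed points by means of an equivariant linearization. First I would check that the attracting sets are well-defined and cover $Z$. Since $Z$ is projective, for any $y \in Z$ the orbit map $\C^\ast \to Z$, $t \mapsto t.y$, extends by the valuative criterion of properness to a morphism $\A^1 \to Z$; the image of $0$ is the limit $\lim_{t\to 0} t.y$, which is necessarily $\C^\ast$-fixed. Thus the limit exists for every $y$ and lies in a unique component $C_i$, which gives both the set-theoretic decomposition $Z = \coprod_{i=1}^r \att(C_i)$ and the fact that each point belongs to exactly one attracting set. The fixed points $C_i$ are their own limits, so $C_i \subseteq \att(C_i)$ as the locus where the limit map is the identity.

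Next comes the local model, which is the heart of the matter. Fix $p \in C_i$. Because $\C^\ast$ is linearly reductive (we work over $\CC$), one can choose a $\C^\ast$-equivariant isomorphism between a $\C^\ast$-invariant neighborhood of $p$ in $Z$ and a neighborhood of $0$ in the tangent space $T_p Z$, endowed with its induced linear $\C^\ast$-action. The same linearization shows that $Z^{\C^\ast}$ is smooth, since its tangent space at $p$ is exactly the weight-zero part. Decompose $T_p Z = T^+ \oplus T^0 \oplus T^-$ into the sums of strictly positive, zero, and strictly negative weight spaces. Writing $v = v^+ + v^0 + v^-$, one sees $t.v = \sum_w t^{w} v_w$, so the limit $\lim_{t\to 0} t.v$ exists precisely when $v^- = 0$, that is $v \in T^{\geq 0} := T^+ \oplus T^0$, and then it equals $v^0$.

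Consequently, in the local model the attracting set $\att(C_i)$ is identified with the smooth locally closed subvariety $T^{\geq 0}$, invariant under $\C^\ast$, and the limit map $\gamma_i$ is identified with the linear projection $T^{\geq 0} \to T^0$ with fibers $T^+$. This yields that $\att(C_i)$ is smooth, locally closed, and $\C^\ast$-invariant, with $C_i$ sitting inside it as the zero section $T^0$. I would then globalize: the fiber of $\gamma_i$ over $p$ is the positive-weight space $T_p^+ \subset T_p Z$, and as $p$ ranges over $C_i$ these assemble into the positive part $N^+$ of the normal bundle of $C_i$ in $Z$, a genuine vector bundle. The local equivariant linearizations over an affine open cover of $C_i$ then present $\gamma_i \colon \att(C_i) \to C_i$ as locally isomorphic to $N^+$, exhibiting it as an affine bundle modeled on $N^+$.

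The main obstacle is precisely this last globalization. The equivariant linearization is only a local (étale or analytic) statement, so the delicate point is to organize the transition data so that the local trivializations patch to a Zariski-locally trivial affine bundle rather than merely an étale-local fibration with affine-space fibers. In \cite{BB73} this is achieved by a careful induction on the weights of the action, using the reductivity of $\C^\ast$ to split off weight spaces compatibly across charts; I would expect to spend most of the effort making the affine-linear nature of the gluing transformations precise, while the existence of limits and the pointwise weight decomposition above are comparatively routine.
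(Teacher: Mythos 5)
First, a point of calibration: the paper does not prove Theorem \ref{bb} at all. It is recalled verbatim from \cite[Section 4]{BB73}, and the paper's own work begins only afterwards (lifting the attracting sets to the representation variety). So your proposal is not competing with an argument in the paper but with Bia{\l}ynicki-Birula's original proof of a classical theorem. The opening of your sketch is correct and routine: projectivity of $Z$ plus the valuative criterion gives existence of $\lim_{t\to 0}t.y$ for every $y\in Z$, the limit is a fixed point, and one obtains the set-theoretic decomposition $Z=\coprod_{i}\att(C_i)$ together with $C_i\subseteq \att(C_i)$.

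The genuine gap is exactly the step you call the heart of the matter. In the algebraic category there is in general \emph{no} $\C^\ast$-equivariant isomorphism between a Zariski-open invariant neighborhood of a fixed point $p\in Z$ and a neighborhood of $0$ in $T_pZ$. Linear reductivity of $\C^\ast$ yields such a linearization only formally (on completed local rings), analytically (Bochner linearization, by averaging over $S^1\subset\C^\ast$), or \'etale-locally (Luna's slice theorem, which in turn needs an invariant affine neighborhood, i.e.\ Sumihiro's theorem). What such a local model can honestly deliver is therefore only that $\att(C_i)\to C_i$ is, \'etale- or analytically-locally, a product with affine-space fibers. But the content of the theorem --- that $\att(C_i)$ is a smooth Zariski-locally closed \emph{subvariety} and that $\gamma_i$ is a \emph{Zariski}-locally trivial affine bundle --- is precisely what you acknowledge and then defer in your final paragraph. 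That deferral is the whole theorem: a morphism whose fibers are affine spaces, even one that is \'etale-locally trivial, need not carry any affine-bundle structure, so the upgrade is not a matter of organizing transition data; and it is also not how \cite{BB73} proceeds (roughly, one reduces to a linear action on projective space via an equivariant embedding, where the cells and the projections $\gamma_i$ are given by explicit coordinates, and then controls the intersection with $Z$ through the graded coordinate rings of invariant affine opens). As written, your proposal establishes the easy set-theoretic decomposition and the correct local heuristic, but not the smoothness, local closedness, or affine-bundle assertions of Theorem \ref{bb}.
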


In order to apply Theorem \ref{bb} to the torus action defined in Section \ref{sec:torusaction}, we can define a $\C^\ast$-action on $M^{\Theta-\mathrm{st}}_{ \alpha}( Q)$ with the same fixed point set. 
Once we do this, it follows directly that the moduli space of stable representations admits a cell decomposition into affine spaces if the fixed point set is finite.
To do so, we fix a one-parameter subgroup $\gamma=(\gamma_a)_{a\in Q_1}:\C^\ast\to (\C^\ast)^{\mid Q_1\mid}$ which is sufficiently general and consider the induced $\C^\ast$-action on $R^{\Theta-\mathrm{st}}_\alpha(Q)$, i.e.
\begin{equation}t.(M_a)_{a\in Q_1}:=(\gamma_a(t)M_a)_{a\in Q_1}.\end{equation}
Recall that such a one-parameter subgroup is given by a vector $(\gamma_a)_a\in\Z^{Q_1}$. In \cite[Chapter 2.4]{Pet07}, it is worked out how the attractor sets can be determined for a torus action on a geometric quotient coming from an action of a linear algebraic group on a vector space. We transfer and extend the results to adjust them to our situation including the proofs for completeness. 

Thereby, our main interest is in lifting the attracting set
 \begin{equation}\mathrm{Att}(\bar T)=\{\bar M\in\mo\mid\lim_{t\to 0} t.\bar M=\bar T\}\end{equation}
of a torus fixed point $\bar T\in\mo$ to $\ros$, i.e. we investigate the sets
\begin{equation}\att(T)=\{M\in\ros\mid \lim_{t\to 0}(\psi_T(t),t).M=T\}\end{equation}
for a lift $T\in\ros$. Then the next step is to deduce cells $(T,U)$ of indecomposable representations from $\att(T)$ where $U$ is in bijection with $\att(\bar T)$. If $T$ is a tree module - which is for instance the case if it is exceptional as a representation of $\hat Q$ - this gives a $(T,U)$-normal form for the lifted representations.

\begin{remark}\label{rem:standardformsubgroup}
 With a tree module $T\in R_\alpha(Q)$ with homogeneous basis $\mathcal B_T$, we can associate a subquiver and a dimension vector $\hat \alpha_T$  of the universal abelian covering quiver. Both are unique up to translation by $\chi\in A_Q$. In this way, we can associate a vertex $(q,\chi)$ with every $b\in\cB_T$.

Consider the group homomorphism $d_\gamma:A_Q\to\Z$ by $d_\gamma(e_a)=\gamma_a$.
If $T$ is stable, i.e. $T$ is a torus fixed point, \eqref{eq:normalform} shows that the corresponding one-parameter subgroup $\psi_T:\C^\ast\to\Gl_\alpha$ is given by diagonal matrices with diagonal entries $(\psi_T(t)_{q})_{b,b}=t^{d_\gamma(\chi)}$ where $b\in\cB_T$ is supported at $(q,\chi)$. 
 In particular, $\psi_T$ only depends on the dimension vector $\hat\alpha_T\in\ZZ_{\geq 0}^{\hat Q_0}$.

In the following, we call a one-parameter subgroup $\psi:\C^\ast\to\Gl_\alpha$ \emph{in standard form} if every $\psi_q$ is given by a diagonal matrix. 
\end{remark}

 We define the group 
$\hat G_\alpha=\Gl_\alpha\times \C^{\ast}.$
It acts on $R_\alpha^{\Theta-\mathrm{st}}(Q)$ via
\begin{equation}(g,t).M=t^{-1}.(g\ast M)=g\ast(t^{-1}.M).\end{equation}

Recall that a one-parameter subgroup $\psi$ of $\Gl_\alpha$ consists of a collection $(\psi_q)_{q\in Q_0}$ of one-parameter subgroups $\psi_q:\C^\ast\to\Gl_{\alpha_q}$. In turn a one-parameter subgroup of $\hat G_\alpha$ is obtained by adding a character $\chi\in X(\C^\ast)$, i.e. we have $\chi(t)=t^n$ for some $n\in\Z$.
The group $\Gl_\alpha$ acts on the set of one-parameter subgroups of $\Gl_\alpha$ via conjugation, i.e. we have
\begin{equation}(g.\psi)(t):=(g_q\psi_q(t)g_q^{-1})_{q\in Q_0}\end{equation}
for $\psi:\C^\ast\to\Gl_\alpha$. This induces an action on the set of one-parameter subgroups of $\hat G_\alpha$ via $g.(\psi(t),t^n):=((g.\psi)(t),t^n)$.


We start by proving some technical results which are needed for lifting the attracting cells. A similar result is proved in \cite[Proposition 2.26]{Pet07}:
\begin{lemma}\label{hilbert}
Let $\bar T\in\mo$ be a torus fixed point and let $\bar M\in\mathrm{Att}(\bar T)$. Moreover, let $T\in \ros$ be a lift of $\bar T$ and $M\in\ros$ be a lift of $\bar M$. Then there exists a one-parameter subgroup $\hat\psi:\C^\ast\to \hat G_\alpha$ such that 
\begin{equation*}\lim_{t\to 0}\hat\psi(t).M\in\hat G_\alpha T=\Gl_\alpha T.\end{equation*}  
\end{lemma}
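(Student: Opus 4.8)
The plan is to transfer everything into the combined group $\hat G_\alpha=\Gl_\alpha\times\C^\ast$ and to exploit that the $\C^\ast$-orbit of $\bar M$ already converges downstairs. First I would record the identity $\hat G_\alpha T=\Gl_\alpha T$ stated in the lemma: since $T$ is a lift of a torus fixed point we have $t.T=\psi_T(t)\ast T$, so the scaling acts on $T$ through a base change, and $(g,s).T=s^{-1}.(g\ast T)=(g\,\psi_T(s^{-1}))\ast T\in\Gl_\alpha T$. Next I would rewrite the flow of $M$ inside the $\hat G_\alpha$-action, namely $\gamma(t).M=t.M=(e,t^{-1}).M$. The strategy then splits into two parts: lift the convergent curve $s\mapsto s.\bar M$ from $\mo$ to a convergent curve in $\ros$ whose limit lies in $\Gl_\alpha T$, and afterwards straighten the lifting datum into a genuine one-parameter subgroup of $\hat G_\alpha$.

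For the lifting, note that $\bar M\in\mathrm{Att}(\bar T)$ means exactly that the morphism $\C^\ast\to\mo$, $s\mapsto s.\bar M$, extends to a morphism $\Spec\C[[t]]\to\mo$ with special value $\bar T$, because $\mo$ is projective under the $\Theta$-coprimality assumption. Call this extension $\sigma$; its generic fibre is the $\C((t))$-point $t.\bar M$. Since $\alpha$ is $\Theta$-coprime, $\pi\colon\ros\to\mo$ is a principal $\PGl_\alpha$-bundle and $\pi^{-1}(\bar T)=\Gl_\alpha T$ is a single orbit. I would then lift $\sigma$ along $\pi$: the pullback torsor over $\Spec\C[[t]]$ is trivial, because $\PGl_\alpha$-torsors over the complete local ring $\C[[t]]$ with algebraically closed residue field $\C$ are trivial (the residue-field torsor is trivial and lifts by smoothness). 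This produces $\tilde M\in\ros(\C[[t]])$ with $\pi\circ\tilde M=\sigma$, so in particular $\tilde M(0)\in\pi^{-1}(\bar T)=\Gl_\alpha T$. Comparing generic fibres, $\tilde M|_{\C((t))}$ and $(e,t^{-1}).M$ both lie over $t.\bar M$, hence differ by an element of $\hat G_\alpha(\C((t)))$ (the fibre of the bundle has a rational point, so is a trivial torsor); writing it as $\hat g(t)=(g(t),t^{-1})$ I obtain a curve with $\lim_{t\to 0}\hat g(t).M=\tilde M(0)\in\Gl_\alpha T$.

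It remains to replace the correcting element $\hat g(t)$ by a one-parameter subgroup. Here I would invoke the Cartan (Smith normal form) decomposition $\hat g(t)=a(t)\,\hat\psi(t)\,b(t)$ with $a(t),b(t)\in\hat G_\alpha(\C[[t]])$ having invertible specialisations $a(0),b(0)$ and $\hat\psi$ a genuine cocharacter of $\hat G_\alpha$. Since $a(t)\to a(0)$, the limit $\lim_{t\to0}\hat\psi(t)\,b(t).M=a(0)^{-1}.\tilde M(0)$ exists and lies in $\Gl_\alpha T$. Decomposing $R_\alpha(Q)$ into $\hat\psi$-weight spaces and writing $x(t):=b(t).M=\sum_w x_w(t)$, convergence of $\sum_w t^w x_w(t)$ forces $x_w(0)=0$ for every negative weight $w$; hence $\lim_{t\to0}\hat\psi(t).(b(0).M)$ also exists. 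Conjugating, $\hat\psi':=b(0)^{-1}\hat\psi\,b(0)$ is again a cocharacter and $\lim_{t\to0}\hat\psi'(t).M=b(0)^{-1}.\lim_{t\to0}\hat\psi(t).(b(0).M)$ exists. Finally, because the $\Gl_\alpha$-component acts trivially on $\mo$ while the $\C^\ast$-component of $\hat\psi'$ equals $t^{-1}$, one computes $\pi(\hat\psi'(t).M)=t.\bar M\to\bar T$, so the limit maps to $\bar T$ and therefore lies in $\pi^{-1}(\bar T)=\Gl_\alpha T$. Taking $\hat\psi:=\hat\psi'$ would finish the proof.

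I expect the main obstacle to be the straightening in the last paragraph, that is, passing from the arbitrary lifting element $\hat g(t)$ to an honest one-parameter subgroup: the Cartan decomposition by itself does not control the limit, and it is the weight-space bookkeeping together with the conjugation by $b(0)$ that simultaneously guarantees the limit survives and stays in the closed orbit $\Gl_\alpha T$. A secondary technical point to treat carefully is the triviality of the $\PGl_\alpha$-torsor used in the lifting, which is exactly where $\Theta$-coprimality enters, giving both the principal-bundle structure of $\pi$ and the single-orbit fibres $\pi^{-1}(\bar T)=\Gl_\alpha T$.
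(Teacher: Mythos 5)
Your proposal takes a genuinely different route from the paper: the paper does not re-derive the Hilbert--Mumford criterion but quotes it ([Kra84], [Bir71]) for the $\hat G_\alpha$-variety $\ros$, and spends its whole proof verifying the hypothesis of that citation, namely that $\hat G_\alpha T=\pi^{-1}(\bar T)$ is a closed $\hat G_\alpha$-stable subset of the orbit closure $\overline{\hat G_\alpha M}$ (taken inside $\ros$), which follows because $\pi$ is a geometric quotient. You instead reprove the criterion from scratch: formal extension of the curve downstairs, triviality of the $\PGl_\alpha$-torsor over $\C[[t]]$ to lift it, comparison with $t.M$ to get $\hat g(t)\in\hat G_\alpha(\C((t)))$, Cartan/Iwahori decomposition, weight-space bookkeeping, and conjugation by $b(0)$. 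Everything up to and including the existence of the limit $L:=\lim_{t\to 0}\hat\psi'(t).M$ in $R_\alpha(Q)$ is correct, and is essentially the standard Kempf--Mumford argument.

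The gap is the final inference. From $\pi(\hat\psi'(t).M)=t.\bar M\to\bar T$ you conclude that $L$ ``maps to $\bar T$ and therefore lies in $\pi^{-1}(\bar T)$.'' This presupposes $L\in\ros$: the map $\pi$ is undefined at unstable points, and there is no continuity across the boundary of the stable locus. Nothing in your argument excludes that $L$ is unstable, and the property you actually invoke is insufficient: for $K(2)$, $\alpha=(1,1)$, $\Theta=(1,0)$, $\gamma=(2,1)$, $M=(1,1)$, $T=(0,1)$, the one-parameter subgroup $\hat\psi(t)=(e,t^{-1})$ satisfies $\pi(\hat\psi(t).M)=t.\bar M\to\bar T$ and its limit upstairs exists, but equals $(0,0)$, which is unstable and not in $\Gl_\alpha T$. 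So your step-7 reasoning, applied verbatim to this $\hat\psi$, proves a false statement; to finish one must use the specific provenance of $\hat\psi'$. The classical way to do this (Mumford/Kempf) is to note that your limit is $b(0)^{-1}$ applied to the weight-zero truncation $\lim_{s\to\infty}\hat\psi(s).P$ of the point $P=a(0)^{-1}.\tilde M(0)$ of the target, and then to use that the target is \emph{closed and $G$-stable in the ambient affine variety}; but here that fails, since $\Gl_\alpha T$ is closed in $\ros$ yet not in $R_\alpha(Q)$ (its affine closure contains unstable points), so this argument only yields $L\in\overline{\Gl_\alpha T}$. This is precisely the difficulty the paper's formulation sidesteps by requiring closedness only inside $\overline{\hat G_\alpha M}\cap\ros$, checked via the geometric quotient. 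Your proof can be repaired, but it needs a genuine extra ingredient: for instance, replace $M$ by $t_0.M$ so that its image lies in a $\C^\ast$-stable affine open neighbourhood $V$ of $\bar T$ (Sumihiro's theorem on the normal variety $\mo$), observe that $\pi^{-1}(V)$ is affine because $\pi$ is an affine morphism and that $\pi^{-1}(\bar T)$ is closed in it, run your weight-space argument (or the affine Hilbert criterion) inside $\pi^{-1}(V)$, and transport the conclusion back to $M$ using that $(e,t_0^{-1})$ commutes with $\hat\psi'$.
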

\begin{proof}
The aim is to apply the Hilbert criteria in the form of \cite[Theorem 2.4]{Kra84}, see also \cite[Theorem 4.2]{Bir71}, which states that, for any closed $\hat G_\alpha$-stable subset of the orbit closure $\overline{\hat G_\alpha M}$, there exists such a one-parameter subgroup. Therefore, we need to show that $\hat G_\alpha T$ is a closed subset of  $\overline{\hat G_\alpha M}$. 

As $\bar T$ is a fixed point under the $\C^\ast$-action, we have $\hat G_\alpha T=\Gl_\alpha T$. 
If $\pi$ is the quotient map for the $\Gl_\alpha$-action, we have 
\begin{equation}\pi(\hat G_\alpha M)=\{t.\bar M\mid t\in\C^\ast\}.\end{equation}
 Thus we have $\bar T=\lim_{t\to 0}t.\bar M\in \overline{\pi(\hat G_\alpha M)}$ which shows
\begin{equation}\hat G_\alpha T=\pi^{-1}(\bar T)\subseteq \pi^{-1}(\overline{\pi(\hat G_\alpha M)})\subseteq \pi^{-1}(\pi(\overline{\hat G_\alpha M)})=\overline{\hat G_\alpha M}\end{equation}
where we use that $\pi(\overline{\hat G_\alpha M})$ is closed because $\pi:R^{\Theta-\mathrm{st}}_\alpha(Q)\to\mo$ is a geometric quotient. As 
 $\hat G_\alpha T=\pi^{-1}(\bar T)$ is closed in $\ros$ and contained in $\overline{\hat G_\alpha M}$, it is also closed in  $\overline{\hat G_\alpha M}$.
\end{proof}
The following lemma explains the compatibility of the different actions of $\Gl_\alpha$.

\begin{lemma}\label{lemmaG} Let $\bar T\in\mo$ be a torus fixed point and let $\bar M\in\mathrm{Att}(\bar T)$. Moreover, let $T$ be a lift of $\bar T$, $M$ be a lift of $\bar M$ and $\hat\psi:\C^\ast\to\hat G_\alpha$ be a one-parameter subgroup such that
\begin{equation*}\lim_{t\to 0}\hat\psi(t).M=T.\end{equation*}
For every $g\in\Gl_\alpha$, we have 
\begin{equation*}\lim_{t\to 0}(g.\hat\psi)(t).(g\ast M)=g\ast T.\end{equation*}

\end{lemma}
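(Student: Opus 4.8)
The plan is to reduce the statement to a limit-free identity valid for every $t\in\C^\ast$, namely
$$(g.\hat\psi)(t).(g\ast M)=g\ast\bigl(\hat\psi(t).M\bigr),$$
and only afterwards pass to the limit $t\to 0$. Once this identity is in hand, taking limits gives
$$\lim_{t\to 0}(g.\hat\psi)(t).(g\ast M)=g\ast\Bigl(\lim_{t\to 0}\hat\psi(t).M\Bigr)=g\ast T,$$
where the second equality is the hypothesis and the first uses that $g\ast(-)$ is an automorphism of the variety $\ros$ (in particular continuous), hence commutes with taking limits.

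To establish the pointwise identity, I would write $\hat\psi(t)=(\psi(t),t^n)$ for a one-parameter subgroup $\psi\colon\C^\ast\to\Gl_\alpha$ and some $n\in\Z$, so that by definition $(g.\hat\psi)(t)=\bigl((g.\psi)(t),t^n\bigr)$ with $(g.\psi)(t)=g\psi(t)g^{-1}$ (componentwise conjugation). Unwinding the action of $\hat G_\alpha=\Gl_\alpha\times\C^\ast$, namely $(h,s).X=s^{-1}.(h\ast X)$, I would compute
$$(g.\hat\psi)(t).(g\ast M)=t^{-n}.\Bigl(\bigl(g\psi(t)g^{-1}\bigr)\ast(g\ast M)\Bigr).$$
Since $\ast$ is a left action of $\Gl_\alpha$, the inner term collapses: $\bigl(g\psi(t)g^{-1}\bigr)\ast(g\ast M)=\bigl(g\psi(t)\bigr)\ast M=g\ast\bigl(\psi(t)\ast M\bigr)$. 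Finally, because the $\C^\ast$-action and the $\Gl_\alpha$-action commute (as recorded in Section \ref{sec:torusaction}), I can pull $g\ast(-)$ past $t^{-n}.(-)$ to obtain
$$t^{-n}.\Bigl(g\ast\bigl(\psi(t)\ast M\bigr)\Bigr)=g\ast\Bigl(t^{-n}.\bigl(\psi(t)\ast M\bigr)\Bigr)=g\ast\bigl(\hat\psi(t).M\bigr),$$
which is exactly the desired identity.

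The calculation is essentially bookkeeping, so I do not expect a genuine obstacle; the only points that require care are (i) that the $\C^\ast$-component $t^n$ of the one-parameter subgroup is left unchanged by the $g$-action, only the $\Gl_\alpha$-component being conjugated, and (ii) correctly invoking the commutativity of the two actions in the final step. The passage to the limit is harmless because $g\ast(-)$ is a linear isomorphism of $\ros$. Thus the content of the lemma is simply the $\Gl_\alpha$-equivariance of the whole attracting-cell construction: conjugating the given one-parameter subgroup by $g$ and translating the point by $g$ merely translates the limit by $g$.
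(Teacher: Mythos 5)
Your proof is correct and takes essentially the same approach as the paper: the paper verifies the very same identity arrow-by-arrow inside the limit (cancelling $g^{-1}_{t(a)}g_{t(a)}$, $g^{-1}_{s(a)}g_{s(a)}$ and then pulling the constant matrices $g_{t(a)}$, $g_{s(a)}^{-1}$ out of the limit, which is exactly your continuity step), whereas you isolate the $t$-pointwise identity $(g.\hat\psi)(t).(g\ast M)=g\ast\bigl(\hat\psi(t).M\bigr)$ first and pass to the limit afterwards. The ingredients — the left-action collapse, the commutativity of the $\C^\ast$- and $\Gl_\alpha$-actions, and the continuity of $g\ast(-)$ applied to an existing limit — coincide with those in the paper's computation.
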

\begin{proof}
We assume that $\hat \psi=((\psi_q)_q,\chi)$ with $\chi(t)=t^n$. Let $a\in Q_1$. Then we have                           
\begin{eqnarray}
(\lim_{t\to 0}(g.\hat\psi)(t).(g\ast M))_a&=&\lim_{t\to 0}t^{-n\gamma_a}\cdot g_{t(a)}\psi_{t(a)}(t)g_{t(a)}^{-1}g_{t(a)}M_ag_{s(a)}^{-1}g_{s(a)}\psi^{-1}_{s(a)}(t)g_{s(a)}^{-1}\nonumber\\&=&\lim_{t\to 0}t^{-n\gamma_a}\cdot g_{t(a)}\psi_{t(a)}(t)M_a\psi^{-1}_{s(a)}(t)g_{s(a)}^{-1}\nonumber\\&=&g_{t(a)}\cdot(\lim_{t\to 0}t^{-n\gamma_a}\psi_{t(a)}(t)M_a\psi^{-1}_{s(a)}(t))\cdot g_{s(a)}^{-1}\nonumber\\&=&g_{t(a)}(\lim_{t\to 0}\hat\psi(t).M)_a g_{s(a)}^{-1}\nonumber\\&=&(g\ast T)_a
\end{eqnarray}
where we use that the limit $\lim_{t\to 0}\hat\psi(t).M$ exists.
\end{proof}

\begin{lemma}\label{lemmaN} 
Let $\bar T\in\mo$ be a torus fixed point and let $\bar M\in\mathrm{Att}(\bar T)$. Moreover, let $T$ be a lift of $\bar T$, $M$ be a lift of $\bar M$ and $\hat\psi:\C^\ast\to\hat G_\alpha$ be a one-parameter subgroup. Then $\lim_{t\to 0}\hat\psi(t).M=T$ if and only if $\lim_{t\to 0}\hat\psi^m(t).M=T$ for a nonzero integer $m$.
\end{lemma}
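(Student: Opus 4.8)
The plan is to linearize everything and then read the limits off from a weight-space decomposition. First I would observe that the $\hat G_\alpha$-action on $R_\alpha(Q)$ is \emph{linear} in $M$: both $g\ast M=(g_{t(a)}M_ag_{s(a)}^{-1})_{a\in Q_1}$ and the scaling $t^{-1}.M$ depend linearly on the matrix tuple, so $(g,t).M=t^{-1}.(g\ast M)$ makes $\hat G_\alpha$ act by linear automorphisms of the vector space $R_\alpha(Q)$. Composing with $\hat\psi$ yields a one-parameter subgroup $\Psi\colon\C^\ast\to\Gl(R_\alpha(Q))$, which is diagonalizable. I would therefore fix the weight decomposition $R_\alpha(Q)=\bigoplus_{w\in\Z}V_w$, on which $\Psi(t)$ acts by $t^w$, and write $M=\sum_w M^{(w)}$ with $M^{(w)}\in V_w$. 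Since the curve $t\mapsto\hat\psi(t).M$ stays in $\ros$ and the claimed limit point $T$ lies in $\ros\subseteq R_\alpha(Q)$, the limit in $\ros$ agrees with the limit taken in this ambient vector space. (One could instead first conjugate $\hat\psi$ into standard form via Lemma \ref{lemmaG}, but diagonalizing the linear action directly is cleaner and automatic.)

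Next I would characterize the limit purely in terms of the components $M^{(w)}$. Because $\hat\psi(t).M=\sum_w t^wM^{(w)}$, the limit $\lim_{t\to 0}\hat\psi(t).M$ exists if and only if $M^{(w)}=0$ for all $w<0$, in which case it equals the weight-zero component $M^{(0)}$. Hence
\begin{equation*}
\lim_{t\to 0}\hat\psi(t).M=T\iff M^{(w)}=0 \text{ for all } w<0 \ \text{ and } \ M^{(0)}=T.
\end{equation*}

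Finally I would run the identical computation for the power. Since $\Psi(t)^m$ acts on $V_w$ by $t^{mw}$, and $m\neq 0$ forces $mw=0\iff w=0$, the weight-zero space is unchanged; thus $\hat\psi^m(t).M=\sum_w t^{mw}M^{(w)}$, its limit exists exactly when $M^{(w)}=0$ whenever $mw<0$, and its value is again $M^{(0)}$. For the positive powers relevant here, the conditions $mw<0$ and $w<0$ coincide, so both the existence criterion and the limiting value $M^{(0)}$ match those of the displayed equivalence, yielding the claim. Equivalently one can substitute $s=t^m$ (using $t^m\to 0$ as $t\to 0$) for the easy implication, but the converse genuinely needs the monomial shape of the coordinate curve that the weight decomposition provides.

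The one point I would be careful about — and the only real subtlety — is the role of the sign of $m$: replacing $\hat\psi$ by $\hat\psi^m$ simply rescales every weight by $m$, and it is precisely for $m>0$ that this preserves the sign of each weight, hence the attracting directions, so that the two limits coincide. This positivity is exactly what is used in the applications, where one clears denominators in a rational cocharacter to bring $\psi$ into standard form (Remark \ref{rem:standardformsubgroup}) without disturbing the attracting set. Beyond this bookkeeping I expect no obstacle; the essential content is the linearity of the action and the resulting monomial description of the orbit curve $t\mapsto\hat\psi(t).M$.
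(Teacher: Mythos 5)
Your proof is correct and is essentially the paper's own argument in different packaging: the paper conjugates $\hat\psi$ into standard (diagonal) form via Lemma \ref{lemmaG} and computes that each matrix entry of $\hat\psi^m(t).M'$ is the monomial $t^{m(a_{i,t(a)}-a_{j,s(a)}-n\gamma_a)}(M'_a)_{i,j}$, which is exactly your weight-space decomposition of the induced linear $\C^*$-action written out in coordinates. Your caveat about the sign of $m$ is in fact more careful than the paper, whose assertion that ``the existence of the limit is independent of $m$'' holds only for $m>0$ (for negative $m$ both implications fail already on a single coordinate of positive weight), so the lemma's ``nonzero integer $m$'' must be read as ``positive integer $m$'' --- which is all that the subsequent Proposition uses.
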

\begin{proof}One direction is obvious. Thus assume that  $\lim_{t\to 0}\hat\psi^m(t).M=T$.
As before, we decompose $\hat\psi$ into one-parameter subgroups $\psi_q:\C^\ast\to\Gl_{\alpha_q}$  and a character $\chi\in X(\T)$ with $\chi(t)=t^n$ for some $n\in\Z$. For every $q\in Q_0$, there exists $g_q\in\Gl_{\alpha_q}$ and $a_{i,q}\in\Z$ for $i=1,\ldots,\alpha_q$ such that
\begin{equation}\psi_q(t)=g_q\cdot\mathrm{diag}(t^{a_{1,q}},\ldots,t^{a_{\alpha_q,q}})\cdot g_q^{-1}.\end{equation}
Let $g=(g_q)_{q\in Q_0}$ and $\nu_q(t):=\mathrm{diag}(t^{a_{1,q}},\ldots,t^{a_{\alpha_q,q}})$, i.e. $\hat\psi=g.(\nu,\chi)$. Combining Lemma \ref{lemmaG} with the assumption, we have 
\begin{equation}\lim_{t\to 0}(\nu^m(t),t^{nm}).(g^{-1}\ast M)=g^{-1}\ast T.\end{equation}
Let $M'=g^{-1}\ast M$ and $T'=g^{-1}\ast T$. For an arrow $a\in Q_1$, we have
\begin{equation} ((\nu^{m}(t),t^{nm}).M')_a=t^{-nm\gamma_a}\mathrm{diag}(t^{a_{1,t(a)}},\ldots,t^{a_{\alpha_{t(a)},t(a)}})^m\cdot M'_a\cdot\mathrm{diag}(t^{-a_{1,s(a)}},\ldots,t^{-a_{\alpha_{s(a)},s(a)}})^m.\end{equation}
Thus we have $((\nu(t)^{m},t^{mn}).M')_a)_{i,j}=t^{m(a_{i,t(a)}-a_{j,s(a)}-n\gamma_a)}(M'_a)_{i,j}$. This shows that the existence of the limit is independent of $m$ and it follows that
\begin{equation}\lim_{t\to 0}(\nu(t),t^{n}).(g^{-1}\ast M)=g^{-1}\ast T.\end{equation}
This gives the claim when applying Lemma \ref{lemmaG} again.
\end{proof}
The considerations of Section \ref{sec:torusaction} show that for every fixed lift $T$ of a torus fixed point $\bar T$, there exists a one-parameter subgroup $\psi_T:\C^\ast\to\Gl_\alpha$, unique up to some $\chi\in X(\T)$, such that $\psi_T(t)\ast T=t.T$. We can use this to adopt the proof of \cite[Proposition 2.27]{Pet07} for our purposes:
\begin{proposition}Fix a lift $T\in\ros$ of a fixed point $\bar T\in\mo$. For every $\bar M\in\mathrm{Att}(\bar T)$ there exists a lift $M$ such that 
\begin{equation*}\lim_{t\to 0}(\psi_T(t),t).M=T.\end{equation*} 
\end{proposition}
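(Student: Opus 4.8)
The plan is to combine the three preceding lemmas with an analysis of the stabilizer of $T$ in $\hat G_\alpha$, the decisive input being that stability forces this stabilizer to be (up to trivially-acting scalars) the single one-parameter subgroup $(\psi_T(t),t)$. I would begin with an arbitrary lift $M_0\in\ros$ of $\bar M$. By Lemma \ref{hilbert} there is a one-parameter subgroup $\hat\psi\colon\C^\ast\to\hat G_\alpha$ with $\lim_{t\to 0}\hat\psi(t).M_0\in\Gl_\alpha T$, say the limit equals $g\ast T$ for some $g\in\Gl_\alpha$. I then replace $M_0$ by the lift $M:=g^{-1}\ast M_0$; this is again a lift of $\bar M$ since $\ast$ preserves stability and the $\Gl_\alpha$-orbits are exactly the fibres of the geometric quotient $\pi$. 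Applying Lemma \ref{lemmaG} with $g^{-1}$ to $\hat\psi$ produces the one-parameter subgroup $\hat\psi_1:=g^{-1}.\hat\psi$ satisfying $\lim_{t\to 0}\hat\psi_1(t).M=g^{-1}\ast(g\ast T)=T$.

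The next step is to exploit that the limit of a one-parameter subgroup is fixed by that subgroup: from $\lim_{t\to 0}\hat\psi_1(t).M=T$ one gets $\hat\psi_1(s).T=T$ for all $s$, so the image $\hat\psi_1(\C^\ast)$ lies in the stabilizer $\mathrm{Stab}_{\hat G_\alpha}(T)$. Here stability enters: since $T$ is stable it is Schurian, whence $\mathrm{Stab}_{\Gl_\alpha}(T)=\C^\ast$ (scalars). A short computation using the defining relation $\psi_T(t)\ast T=t.T$ shows that $(g',s)\in\hat G_\alpha$ fixes $T$ iff $g'\ast T=s.T=\psi_T(s)\ast T$, i.e. iff $g'\in\psi_T(s)\cdot\C^\ast$; thus $\mathrm{Stab}_{\hat G_\alpha}(T)=\{(\psi_T(t)c,t)\mid t,c\in\C^\ast\}$. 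Modulo the scalar subgroup $\{(c\cdot\id,1)\}$, which acts trivially on $\ros$, this stabilizer is a one-dimensional torus generated by the class of $(\psi_T(t),t)$; note the scalar ambiguity in the choice of $\psi_T$ is irrelevant for the same reason.

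It follows that $\hat\psi_1$, viewed modulo scalars, is an integer power of $(\psi_T(t),t)$. Concretely, writing the $\C^\ast$-component of $\hat\psi_1$ as $t\mapsto t^n$ and using $\psi_T(t)^n=\psi_T(t^n)$, I would conclude $\hat\psi_1(t)=(\psi_T(t),t)^n\cdot(c(t)\cdot\id,1)$ for some character $c$. Since scalars act trivially, $\hat\psi_1(t).M=(\psi_T(t),t)^n.M$, and therefore $\lim_{t\to 0}(\psi_T(t),t)^n.M=T$. If $n=0$ then $\hat\psi_1$ acts trivially, forcing $M=T$ and $\bar M=\bar T$, in which case one simply takes the lift $M=T$ and the claim is immediate. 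If $n\neq 0$, Lemma \ref{lemmaN} with $m=n$ upgrades the statement for $(\psi_T(t),t)^n$ to $\lim_{t\to 0}(\psi_T(t),t).M=T$, which is exactly what is required.

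I expect the main obstacle to be the stabilizer computation in the second paragraph, i.e. showing that $\mathrm{Stab}_{\hat G_\alpha}(T)$ collapses, modulo trivially-acting scalars, to the single subgroup $(\psi_T(t),t)$. This is precisely where stability (hence $\End(T)=\C$) is essential, and it is what makes the $n$-th power produced by the abstract Hilbert-criterion argument reducible to the first power via Lemma \ref{lemmaN}; without it there could be genuinely different one-parameter subgroups achieving the limit.
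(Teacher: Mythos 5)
Your proof is correct and follows essentially the same route as the paper's: Lemma \ref{hilbert} produces the one-parameter subgroup, the limit point is observed to be fixed by it, stability (hence $\End(T)=\C$) identifies that subgroup modulo trivially-acting scalars with a power of $(\psi_T(t),t)$, Lemma \ref{lemmaN} reduces the power to $1$, and Lemma \ref{lemmaG} transfers the statement between lifts. The only cosmetic differences are that you conjugate by $g^{-1}$ at the start rather than at the end, and that you treat the degenerate case $n=0$ explicitly, which the paper leaves implicit.
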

\begin{proof}
By \cite[Section 3.1]{Wei13}, 
the one-parameter subgroup corresponding to $g\ast T$ where $g\in\Gl_\alpha$ is $\psi_{g\ast T}=g.\psi_T$. Fix any lift $M$ of $\bar M$. By Lemma \ref{hilbert}, there exist a one-parameter subgroup $\hat\psi=(\psi,\chi):\C^\ast\to \hat G_\alpha$ with $\chi(t)=t^n$ for some integer $n$ and a $g\in \Gl_\alpha$ such that
\begin{equation}\lim_{t\to 0}\hat\psi(t).M= g\ast T.\end{equation}
For each $t_0\in \C^\ast$, the existence of the limit can be used to show
\begin{equation}(\psi(t_0),t_0^n).( g\ast T)=((\psi(t_0),t_0^n).((\lim_{t\to 0}(\psi(t),t^n).M)=(\lim_{t\to 0}(\psi(t_0t),(t_0t)^n)).M)= g\ast T.\end{equation}
As $\psi_{g\ast T}(t)\ast (g\ast T)=t.(g\ast T)$ if and only if $\psi_{g\ast T}(t)^n\ast (g\ast T)=t^n.(g\ast T)$,
the uniqueness of $\psi_{g\ast T}$ gives $\psi=\chi\cdot(\psi_{ g\ast T})^n$ and $\hat\psi(t)=(\chi\cdot\psi_{g\ast T}(t)^n,t^n)$ for some character $\chi\in X(\T)$. As the scalars - and thus $\chi$ - act trivially on $\ros$, we can apply Lemma \ref{lemmaN} to obtain 
\begin{equation}\lim_{t\to 0}(\psi_{g\ast T}(t),t).M=g\ast T.\end{equation}
 
As we have $\psi_{g\ast T}=g.\psi_T$, we can apply Lemma \ref{lemmaG} and get $\lim_{t\to 0}(\psi_T(t),t).( g^{-1}\ast M)=T.$ 
\end{proof}
Thus we have shown that every $\bar M\in\att(\bar T)$ has a representative in the lifted attracting cell 
\begin{equation}\att(T)=\{M\in\ros\mid \lim_{t\to 0}(\psi_T(t),t).M=T\}.\end{equation}

 We will see that we have an action of a subgroup of a parabolic subgroup of $\Gl_\alpha$ on $\mathrm{Att}(T)$ whose orbit space 
 can be identified with $\mathrm{Att}(\bar T)$. For a one-parameter subgroup $\psi:\C^\ast\to\Gl_\alpha$ we define 
\begin{equation}P_\psi=\{g\in\Gl_\alpha\mid \lim_{t\to 0} \psi_q(t)g_q\psi_q(t)^{-1}\text{ exists for all }q\in Q_0\}\end{equation}
and consider the subgroup
\begin{equation}U_\psi=\{g\in\Gl_\alpha\mid\exists\,\mu\in\C^\ast: \lim_{t\to 0} \psi_q(t)g_q\psi_q(t)^{-1}=\mu E_{\alpha_q}\text{ for all }q\in Q_0\}.\end{equation}

\begin{remark}
If $\psi_q(t)=\diag(t^{a_{1,q}},\ldots,t^{a_{\alpha_q,q}})$ for all $q\in Q_0$ - which is the case if the lift of a torus fixed point is given as a representation of $\hat Q$ - it is easy to determine the corresponding subgroup $U_\psi$. More precisely, for $g\in U_\psi$ we then have $(g_q)_{i,i}=\mu$ for all $q\in Q_0$ and for some $\mu\in\C^\ast$. Moreover, for $i\neq j$ we have that 
$(g_q)_{i,j}$ is arbitrary if $a_{i,q}-a_{j,q}>0$ and $(g_q)_{i,j}=0$ if $a_{i,q}-a_{j,q}\leq 0$.


\end{remark}
We need another technical lemma:
\begin{lemma}\label{lemma: uaction}
Let $T$ be a lift of a torus fixed point, let $M\in\att(T)$ and let $g\in U_{\psi_T}$. 
\begin{enumerate}
\item Then we have $g\ast M\in\att(T)$.
\item We have $g\ast T\in\att(T)$ if and only if $g\in U_{\psi_T}$.
\item If $\psi_T$ is in standard form, for all arrows $a\in Q_1$, we have $(M_a)_{i,j}=(T_a)_{i,j}$ whenever $(T_a)_{i,j}\neq 0$. 
\end{enumerate}
\end{lemma}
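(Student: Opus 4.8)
The plan is to treat the three claims in the order (1), (3), (2), since (1) and (3) are direct computations while (2) requires the deepest input. Throughout I will write $h(t):=\psi_T(t)\,g\,\psi_T(t)^{-1}$ and record the key identity
\begin{equation*}
(\psi_T(t),t).(g\ast N)=h(t)\ast\big((\psi_T(t),t).N\big),
\end{equation*}
valid for any representation $N$ and any $g\in\Gl_\alpha$. This follows purely from the fact that the torus action and the base-change action $\ast$ commute: expanding the left side as $t^{-1}.\big((\psi_T(t)g)\ast N\big)$, writing $\psi_T(t)g=h(t)\psi_T(t)$, and moving $t^{-1}.$ past $h(t)\ast$ yields the right side. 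I will also use that $(\psi_T(t),t).T=t^{-1}.(\psi_T(t)\ast T)=t^{-1}.(t.T)=T$, so that $T\in\att(T)$.

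For (1), apply the identity with $N=M$. As $g\in U_{\psi_T}$, by definition $\lim_{t\to 0}h(t)=\mu\,\id$ for some $\mu\in\C^\ast$, and as $M\in\att(T)$ we have $\lim_{t\to 0}(\psi_T(t),t).M=T$. Since the action morphism $\Gl_\alpha\times\ros\to\ros$ is continuous and both limits exist, the limit of the product factors, giving $\lim_{t\to 0}(\psi_T(t),t).(g\ast M)=(\mu\,\id)\ast T=T$; hence $g\ast M\in\att(T)$. Specializing to $M=T$ already proves the ``if'' direction of (2).

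For (3), assume $\psi_T$ is in standard form, so $\psi_{T,q}(t)=\diag(t^{a_{1,q}},\dots,t^{a_{\alpha_q,q}})$. An entrywise computation gives
\begin{equation*}
\big(((\psi_T(t),t).M)_a\big)_{i,j}=t^{\,a_{i,t(a)}-a_{j,s(a)}-\gamma_a}\,(M_a)_{i,j},
\end{equation*}
and the identical formula with $T$ in place of $M$. Because $T$ is a torus fixed point, $\psi_T(t)\ast T=t.T$ forces the exponent $a_{i,t(a)}-a_{j,s(a)}-\gamma_a$ to vanish whenever $(T_a)_{i,j}\neq 0$. Since $M\in\att(T)$, one has $(T_a)_{i,j}=\lim_{t\to 0}t^{\,a_{i,t(a)}-a_{j,s(a)}-\gamma_a}(M_a)_{i,j}$; evaluating at an index with $(T_a)_{i,j}\neq 0$, where the exponent is $0$, yields $(T_a)_{i,j}=(M_a)_{i,j}$, as claimed.

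For the remaining ``only if'' direction of (2), apply the key identity with $N=T$ to obtain $(\psi_T(t),t).(g\ast T)=h(t)\ast T$, so that $g\ast T\in\att(T)$ is equivalent to $\lim_{t\to 0}h(t)\ast T=T$ in $\ros$. The essential point is that the global scalars act trivially, so $h(t)\ast T$ depends only on the image $\bar h(t)$ of $h(t)$ in $\PGl_\alpha$, and $\bar h(t)=\bar\psi_T(t)\,\bar g\,\bar\psi_T(t)^{-1}$ is the conjugate of $\bar g$ by the genuine one-parameter subgroup $\bar\psi_T\colon\C^\ast\to\PGl_\alpha$. Since $T$ is stable, its orbit is the closed fibre $\pi^{-1}(\bar T)\subseteq\ros$ and the orbit map identifies it equivariantly with $\PGl_\alpha$; hence $\lim_{t\to 0}h(t)\ast T=T$ in $\ros$ if and only if $\lim_{t\to 0}\bar h(t)=\bar\id$ in $\PGl_\alpha$. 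I will then invoke the standard dynamical description of parabolic subgroups in the reductive group $\PGl_\alpha$: for a one-parameter subgroup $\lambda$, the conjugate $\lambda(t)x\lambda(t)^{-1}$ tends to the identity as $t\to 0$ precisely when $x$ lies in the unipotent radical of the associated parabolic. Unwinding the definitions, the preimage in $\Gl_\alpha$ of that unipotent radical is exactly $U_{\psi_T}$, so $\bar h(t)\to\bar\id$ is equivalent to $g\in U_{\psi_T}$, completing (2). The main obstacle is this last equivalence, namely ruling out that $g\ast T$ is attracted to $T$ for some $g\notin U_{\psi_T}$; passing to $\PGl_\alpha$ is what converts convergence in the representation variety into convergence of a conjugation curve, whereupon the parabolic/unipotent-radical dictionary applies. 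A hands-on alternative, decomposing $g=\sum_k g^{(k)}$ into weight components under $\mathrm{ad}\,\psi_T$ and analyzing the leading term, also works but must separately handle the case where the leading coefficient is singular, which is exactly where the orbit-closedness argument is most convenient.
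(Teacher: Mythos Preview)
Your proof is correct and, for parts (1) and (3), proceeds exactly as the paper does. For the ``only if'' direction of (2), both you and the paper reduce to the statement that $\lim_{t\to 0}h(t)\ast T=T$ forces $g\in U_{\psi_T}$, after using that $T$ is a fixed point of the combined action.

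The difference is in how this last step is justified. The paper writes only ``the same argument applies,'' pointing back to the fact that $\End(T)=\kk$; taken literally, that earlier argument assumed $g\in P_{\psi_T}$ so that $\lim_{t\to 0}h(t)$ exists in $\Gl_\alpha$, and then concluded the limit must be a scalar. Your treatment is more careful: you pass to $\PGl_\alpha$, use that the orbit $\Gl_\alpha\ast T=\pi^{-1}(\bar T)$ is closed in $\ros$ and isomorphic to $\PGl_\alpha$ (stability gives trivial stabilizer), deduce $\lim_{t\to 0}\bar h(t)=\bar\id$, and then invoke the standard description of the unipotent radical of the dynamical parabolic $P_{\bar\psi_T}$. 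This is genuinely what is needed to handle an a priori arbitrary $g\in\Gl_\alpha$, and it makes precise what the paper leaves implicit. The trade-off is that the paper's phrasing is shorter, while your argument is self-contained and does not rely on the reader supplying the orbit-closedness step.
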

\begin{proof}Let $\psi=\psi_T$. Let $g\in P_{\psi}$ and $M\in\mathrm{Att}(T)$. Then we have
\begin{eqnarray}\lim_{t\to 0}(\psi(t),t).(g\ast M)&=&\lim_{t\to 0}((\psi_q(t) g_q \psi_q(t)^{-1})_{q\in Q_0},t).(\psi(t)\ast M)\nonumber\\&=&(\lim_{t\to 0}(\psi_q(t) g_q \psi_q(t)^{-1})_{q\in Q_0})\ast (\lim_{t\to 0}(\psi(t),t). M)\nonumber\\&=&(\lim_{t\to 0}(\psi_q(t) g_q \psi_q(t)^{-1})_{q\in Q_0})\ast T
\end{eqnarray}
where the equations hold because the respective limits exist. 

Now the endomorphism ring of $T$ is trivial, which means that we additionally have $g\in U_{\psi}$ if and only if
\begin{equation}\lim_{t\to 0}(\psi(t),t).(g\ast M)=T.\end{equation}
This shows the first claim. 

For the second claim, assume that $g\in\Gl_\alpha$ and consider
\begin{eqnarray}\lim_{t\to 0}(\psi(t),t).(g\ast T)&=&\lim_{t\to 0}(\psi_q(t) g_q \psi_q(t)^{-1})_{q\in Q_0})\ast((\psi(t),t).T)\nonumber\\&=&\lim_{t\to 0}(\psi_q(t) g_q \psi_q(t)^{-1})_{q\in Q_0}\ast T
\end{eqnarray}
 where we use that $T$ is a fixed point. Now the same argument applies.

If $\psi$ is in standard from and $M\in\att(T)$, for $a\in Q_1$, we have \begin{equation}\lim_{t\to 0}((\psi(t),t).M_a)_{i,j}=\lim_{t\to 0}t^{-\gamma_a+a_{i,t(a)}-a_{j,s(a)}}(M_a)_{i,j}=(T_a)_{i,j}.\end{equation}
If $(T_a)_{i,j}\neq 0$, it follows that $-\gamma_a+a_{i,t(a)}-a_{j,s(a)}=0$ and thus $(T_a)_{i,j}=(M_a)_{i,j}$.
\end{proof}

The following result shows that the second part of the lemma holds for arbitrary $M\in\att(T)$. A similar result is proved in \cite[Lemma 2.32]{Pet07}:
\begin{proposition}\label{pro:Uaction}
Let $T$ be a lift of a torus fixed point. Then there exists an action of $U_{\psi_T}$ on $\att(T)$ such that for all $M\in\att(T)$ we have $\Gl_\alpha\ast M\cap \mathrm{Att}(T)=U_{\psi_T}\ast M$. In particular, the affine space $\pi(\mathrm{Att}(T))=\att(\bar T)$ is the orbit space for the $U_{\psi_T}$-action on $\att(T)$ which we sometimes write as $\att(T)/U_\psi$.
\end{proposition}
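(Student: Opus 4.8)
The plan is to first prove the set-theoretic identity $\Gl_\alpha\ast M\cap\att(T)=U_{\psi_T}\ast M$, and then to read off the orbit-space description as a formal consequence. Writing $\psi:=\psi_T$, I would begin with one reduction and two standard inputs. Since every one-parameter subgroup of $\Gl_\alpha$ is conjugate to a diagonal one and, by Lemma \ref{lemmaG}, conjugating the chosen lift $T$ by $g_0\in\Gl_\alpha$ transports $(\att(T),U_\psi)$ to $(g_0\ast\att(T),\,g_0U_\psi g_0^{-1})$, I may assume $\psi$ is in standard form. I will use that scalars act trivially via $\ast$, so that $\ast$ descends to an action of $\PGl_\alpha$ for which $\pi\colon\ros\to\mo$ is a principal $\PGl_\alpha$-bundle \cite{Kin94}, and that the $\Gl_\alpha$-stabilizer of any stable point is exactly $\C^\ast$ (stable representations being Schurian).

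The existence of the $U_\psi$-action on $\att(T)$, and the inclusion $U_\psi\ast M\subseteq\Gl_\alpha\ast M\cap\att(T)$, are immediate from Lemma \ref{lemma: uaction}(1), since $U_\psi$ is a subgroup of $\Gl_\alpha$. For the orbit-space statement I would record now that, as $\pi$ is a geometric quotient for $\Gl_\alpha$, two stable points share a $\pi$-image if and only if they lie in one $\Gl_\alpha$-orbit; hence the fibre of $\pi|_{\att(T)}$ through $M$ equals $\Gl_\alpha\ast M\cap\att(T)$.

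The crux, and the step I expect to be the main obstacle, is the reverse inclusion $\Gl_\alpha\ast M\cap\att(T)\subseteq U_\psi\ast M$, i.e. showing that a single $\Gl_\alpha$-orbit meets the lifted cell in exactly one $U_\psi$-orbit. Given $N=g\ast M\in\att(T)$, I would form the curves $\sigma(t):=(\psi(t),t).M$ and $\tau(t):=(\psi(t),t).N$; since $M,N\in\att(T)$ their coordinate functions are Laurent polynomials in $t$ with finite limits at $t=0$, hence regular, so $\sigma,\tau$ extend to morphisms $\A^1\to\ros$ with $\sigma(0)=\tau(0)=T$. The identity from the proof of Lemma \ref{lemma: uaction} gives $\tau(t)=h(t)\ast\sigma(t)$ for $t\neq0$, where $h(t):=\psi(t)g\psi(t)^{-1}$. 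Thus $(\sigma,\tau)$ is a morphism $\A^1\to\ros\times_{\mo}\ros$ (the fibre product is closed in $\ros\times\ros$ as $\mo$ is separated, so the limit point $(T,T)$ lies in it as well). Using the principal-bundle isomorphism $\ros\times_{\mo}\ros\cong\ros\times\PGl_\alpha$, $(M',[g'])\mapsto(M',g'\ast M')$, this morphism becomes $t\mapsto(\sigma(t),[h(t)])$, which is therefore a morphism on all of $\A^1$; its value at $t=0$ is $(T,[\id])$ because $\id\ast T=T$. Consequently $[h(t)]\to[\id]$ in $\PGl_\alpha$ as $t\to0$. The remaining point is to upgrade this to $g\in U_\psi$: writing $h(t)=\lambda(t)H(t)$ via a local section of $\Gl_\alpha\to\PGl_\alpha$ at $[\id]$, normalized so that $H(t)\to\id$, I would use that in standard form the diagonal entries of $h(t)=\psi(t)g\psi(t)^{-1}$ are the constants $(g_q)_{ii}$. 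Matching these against $H(t)\to\id$ forces all $(g_q)_{ii}$ to equal a common $\mu\in\C^\ast$ and $\lambda(t)\to\mu$, whence $\lim_{t\to0}\psi(t)g\psi(t)^{-1}$ exists and is scalar, i.e. $g\in U_\psi$, so $N\in U_\psi\ast M$. The delicate feature here is that convergence of $[h(t)]$ in the non-proper group $\PGl_\alpha$ does not by itself guarantee convergence of $h(t)$ in $\Gl_\alpha$; it is precisely the combination of the standard form of $\psi$ (making the diagonal of $h(t)$ constant) with the triviality of the stabilizer of the stable point $T$ that rules out a runaway scalar and pins the limit to a scalar matrix. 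This is the analogue of \cite[Lemma 2.32]{Pet07}.

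Finally, I would assemble the orbit-space claim: by the reverse inclusion just proved together with the fibre description, the fibres of $\pi|_{\att(T)}$ are precisely the $U_\psi$-orbits; since the preceding proposition shows that $\pi$ maps $\att(T)$ onto $\att(\bar T)$, it follows that $\att(\bar T)$ is the orbit space $\att(T)/U_\psi$, as asserted.
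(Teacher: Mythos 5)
Your proposal is correct, but its key step runs along a genuinely different track from the paper's own proof. Both arguments make the same preliminary reductions: passage to standard form via Lemma \ref{lemmaG}, existence of the $U_{\psi_T}$-action and the inclusion $U_{\psi_T}\ast M\subseteq \Gl_\alpha\ast M\cap\att(T)$ via Lemma \ref{lemma: uaction}(1), and the orbit-space statement via the geometric-quotient property of $\pi$. For the reverse inclusion, however, the paper stays entirely inside linear algebra: it expands the entries of $(\psi(t),t).(g\ast M)$ as Laurent monomials in $t$, uses Lemma \ref{lemma: uaction}(3) (the entries of $M\in\att(T)$ agree with those of $T$ wherever $T_a$ is nonzero) to conclude that existence of $\lim_{t\to 0}(\psi(t),t).(g\ast M)$ forces existence of $\lim_{t\to 0}(\psi(t),t).(g\ast T)$, and then invokes Lemma \ref{lemma: uaction}(2) to get $g\in U_{\psi_T}$ --- this is precisely the analogue of \cite[Lemma 2.32]{Pet07} that you anticipated. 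You instead appeal to the torsor property of $\pi\colon\ros\to\mo$, i.e.\ the isomorphism $\PGl_\alpha\times\ros\cong\ros\times_{\mo}\ros$, to deduce that $[\psi(t)g\psi(t)^{-1}]\to[\id]$ in $\PGl_\alpha$, and then use the standard form (the diagonal entries of $\psi(t)g\psi(t)^{-1}$ are the constants $(g_q)_{ii}$) together with a local section of $\Gl_\alpha\to\PGl_\alpha$ to promote convergence in $\PGl_\alpha$ to convergence in $\Gl_\alpha$ with scalar limit; your treatment of the delicate point --- that an all-zero diagonal is impossible because $\lambda(t)\in\C^\ast$, so the limit cannot run away by a scalar --- is sound. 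As to what each approach buys: the paper's computation is elementary and self-contained, using only the geometric-quotient property of $\pi$ together with Lemma \ref{lemma: uaction}(2)--(3); your argument bypasses those two parts of the lemma entirely and is more conceptual, so it would transfer to settings where no explicit matrix description of the lifted cell is at hand, but it imports the \'etale-local triviality of the $\PGl_\alpha$-quotient on the stable locus. That input is standard (freeness of the $\PGl_\alpha$-action on stable points plus Luna's slice theorem, available here since $\alpha$ is assumed $\Theta$-coprime), but it is strictly more than what \cite{Kin94} is used for anywhere else in the paper, so a careful write-up should either cite it precisely or include the one-line deduction from Luna's theorem.
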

\begin{proof}Write $\psi=\psi_T$. The existence of the $U_\psi$-action is a consequence of Lemma \ref{lemma: uaction}. It remains to show that $g\in U_\psi$ if $g\ast M\in\att(T)$  and $M\in\att(T)$.
Lemma \ref{lemmaG} implies that we can assume that each $\psi_q$ is in standard form, i.e. we assume that $\psi_q(t)=\diag(t^{a_{1,q}},\ldots,t^{a_{\alpha_q,q}})$ for integers $a_{i,q}$. Then we have
\begin{equation}(((\psi(t),t).(g\ast M))_a)_{k,l}=\sum_{i=1}^{\alpha_{t(a)}}\sum_{j=1}^{\alpha_{s(a)}}t^{-\gamma_a+a_{k,t(a)}-a_{l,s(a)}}(g_{t(a)})_{k,i}(M_a)_{i,j}(g_{s(a)}^{-1})_{j,l}.\end{equation}
As we have $(M_a)_{i,j}=(T_a)_{i,j}$ if $(T_a)_{i,j}\neq 0$, this shows that $\lim_{t\to 0}(\psi(t),t).(g\ast T)$ exists whenever $\lim_{t\to 0}(\psi(t),t).(g\ast M))$ exists.  Indeed, for the limit to exist, the limit of every single summand needs to exist. But now the second part of Lemma \ref{lemma: uaction} shows that $g\in U_\psi$.
\end{proof}

The following result translates the results of this section into the language of Section \ref{sec:homological}.
\begin{theorem}\label{thm:geomcells}Let $\bar T\in\mo$ be a torus fixed point and $T\in R_\alpha(Q)$ be a lift. Then there exists a subspace $V_T\subset R(T,T)$ such that $\att(T)=T+V_T$, i.e. for all $M\in\att(T)$ we have $M\cong T(\lambda)$ with $\lambda\in V_T$.

If we choose $U_T\subset V_T$ such that $|(U_{\psi_T}\ast T(\lambda))\cap \att(T)|=1$ for every $\lambda\in U_T$, then $U_T$ represents a strong and separating subset of $\Ext(T,T)$.
\end{theorem}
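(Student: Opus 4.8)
The plan for the first claim is to reduce to the case that $\psi_T$ is in standard form and then read off the attracting set from the weights. By Lemma \ref{lemmaG}, if $\psi_T=g.\nu$ with $\nu$ in standard form, then $g\ast(-)$ is linear on $R_\alpha(Q)$ and carries $\att(g^{-1}\ast T)$ onto $\att(T)$, so it suffices to treat the standard-form lift $g^{-1}\ast T$; thus I assume $\psi_T(t)_q=\mathrm{diag}(t^{a_{1,q}},\dots,t^{a_{\alpha_q,q}})$. Writing $d_{a,i,j}:=-\gamma_a+a_{i,t(a)}-a_{j,s(a)}$, a direct computation gives $(((\psi_T(t),t).M)_a)_{i,j}=t^{d_{a,i,j}}(M_a)_{i,j}$, so the limit as $t\to 0$ exists and equals $T$ exactly when $(M_a)_{i,j}=0$ for $d_{a,i,j}<0$, $(M_a)_{i,j}=(T_a)_{i,j}$ for $d_{a,i,j}=0$, and $(M_a)_{i,j}$ is free for $d_{a,i,j}>0$. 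Since $T$ is a fixed point we have $(T_a)_{i,j}\neq 0\Rightarrow d_{a,i,j}=0$, so the unique such $M$ with all free coordinates set to $0$ is $T$ itself. Hence the set of all $M\in R_\alpha(Q)$ whose limit is $T$ is the affine subspace $T+V_T$, where $V_T$ is spanned by the coordinate directions with $d_{a,i,j}>0$.

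It remains to see $T+V_T\subseteq\ros$, so that this linear set is genuinely $\att(T)$. Here I would use that the unstable locus $Z:=R_\alpha(Q)\setminus R^{\Theta-\mathrm{sst}}_\alpha(Q)$ is closed and $\hat{G}_\alpha$-invariant, since base change preserves isomorphism type and the torus preserves the submodule lattice, so both preserve (semi)stability. For $M\in T+V_T$ the orbit curve $(\psi_T(t),t).M$ lies in $\hat{G}_\alpha M$ for all $t\neq 0$; were $M$ unstable this curve would lie in the closed set $Z$ and force its limit $T\in Z$, contradicting stability of $T$. Thus every $M\in T+V_T$ is semistable, and $\Theta$-coprimality upgrades this to stable, giving $\att(T)=T+V_T$. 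This already yields that $U_T$ is strong: each $T(\lambda)$ with $\lambda\in U_T\subseteq V_T$ lies in $\ros$ and is therefore stable, and over $\C$ a stable representation has endomorphism ring $\C$, hence is indecomposable.

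For the separating property, suppose $T(\lambda)\cong T(\mu)$ with $\lambda,\mu\in U_T$, say $T(\mu)=g\ast T(\lambda)$ for some $g\in\Gl_\alpha$. As $T(\lambda),T(\mu)\in\att(T)$, Proposition \ref{pro:Uaction} gives $T(\mu)\in\Gl_\alpha\ast T(\lambda)\cap\att(T)=U_{\psi_T}\ast T(\lambda)$. Thus both $T(\lambda)$ and $T(\mu)$ lie in $(U_{\psi_T}\ast T(\lambda))\cap\att(T)$, which by hypothesis is a single point; therefore $T(\mu)=T(\lambda)$ and $\mu=\lambda$. Hence $U_T$ is separating.

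Finally, for the assertion that $U_T$ represents a subset of $\Ext(T,T)$, I must show $\pi_{T,T}$ is injective on $U_T$, i.e. $\lambda-\mu\notin\im d_{T,T}$ for distinct $\lambda,\mu\in U_T$. I would exploit that $d_{T,T}$ and $\pi_{T,T}$ are $\C^\ast$-equivariant for the weights $d_{a,i,j}$, so that $\im d_{T,T}$ is graded; because $\End(T)=\C$ is concentrated in weight $0$, the positive-weight part gives $\ker(\pi_{T,T})\cap V_T=\im d_{T,T}\cap V_T=d_{T,T}(\mathrm{Lie}\,U_{\psi_T})$, which is precisely the tangent space at $T$ to the $U_{\psi_T}$-orbit. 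The hypothesis then enters decisively: by Lemma \ref{lemma: uaction} the orbit $U_{\psi_T}\ast T(\lambda)$ lies inside $\att(T)$, so $|(U_{\psi_T}\ast T(\lambda))\cap\att(T)|=1$ means this orbit is a single point, forcing $U_{\psi_T}$ to fix the Schurian representation $T(\lambda)$ and hence $U_{\psi_T}=\C^\ast$ (as $\Aut(T(\lambda))=\C^\ast$ while $U_{\psi_T}\supseteq\C^\ast$); then $\ker(\pi_{T,T})\cap V_T=d_{T,T}(\C\cdot\id)=0$ and $\pi_{T,T}|_{V_T}$, a fortiori $\pi_{T,T}|_{U_T}$, is injective. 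I expect the genuine difficulty of the whole argument to be exactly this $\Ext$-statement: establishing the graded identification of $\ker(\pi_{T,T})\cap V_T$ with the orbit tangent space, and more broadly reconciling the infinitesimal cokernel $\Ext(T,T)=\coker d_{T,T}$ with the global $U_{\psi_T}$-orbit geometry from Proposition \ref{pro:Uaction} and the Bia\l ynicki--Birula bundle of Theorem \ref{bb}. Since the $U_{\psi_T}$-orbits in $V_T$ are not affine subspaces, if one instead reads the hypothesis as selecting one representative per orbit, injectivity of $\pi_{T,T}|_{U_T}$ will require choosing $U_T$ compatibly with the linear fibres of $\pi_{T,T}|_{V_T}$ rather than for an arbitrary transversal.
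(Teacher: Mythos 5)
Your first three paragraphs follow the paper's own proof essentially step by step: reduce to a lift whose one-parameter subgroup $\psi_T$ is in standard form (the paper does this by taking $T$ lifted to the universal abelian cover and handles an arbitrary lift via Lemma \ref{lemmaG} at the very end, exactly as you do at the start), read off $\att(T)=T+V_T$ from the weights $-\gamma_a+a_{i,t(a)}-a_{j,s(a)}$, get ``strong'' from stability of points of $\att(T)$, and get ``separating'' from Proposition \ref{pro:Uaction} together with the hypothesis on $U_T$. One genuine improvement on your side: the paper's displayed computation of $\att(T)$ silently replaces ``$M\in\ros$'' (the definition of $\att(T)$) by ``$M\in R_\alpha(Q)$'', i.e.\ it never argues that every point of $T+V_T$ is in fact (semi)stable; your closed-unstable-locus argument along the orbit curve $(\psi_T(t),t).M$ fills exactly this gap.

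Where you diverge is your last paragraph, and there you have been led astray by what is almost certainly a typo in the statement. As you observe, Lemma \ref{lemma: uaction}(1) gives $(U_{\psi_T}\ast T(\lambda))\cap\att(T)=U_{\psi_T}\ast T(\lambda)$, so the hypothesis read literally forces $U_{\psi_T}\subseteq\mathrm{Stab}_{\Gl_\alpha}(T(\lambda))=\C^\ast$, i.e.\ $U_{\psi_T}$ consists only of scalars. Your deduction from this is internally consistent, but it trivializes the theorem and is incompatible with the paper's own use of it: in the $K(3)$, $(2,3)$ example immediately following the theorem, $U_{\psi_T}$ is a nontrivial unipotent group (times scalars) while $U_T$ is a four-dimensional cell. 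The intended condition --- and the one the paper's proof actually combines with Proposition \ref{pro:Uaction} --- is $|(U_{\psi_T}\ast T(\lambda))\cap(T+U_T)|=1$, i.e.\ the affine set $T+U_T$ is a transversal to the $U_{\psi_T}$-orbits inside $\att(T)$. Your separating argument survives verbatim under this reading; your injectivity argument for $\pi_{T,T}|_{U_T}$ (which runs through $U_{\psi_T}=\C^\ast$) does not, as you yourself concede in your closing sentence. For what it is worth, the paper's proof never addresses the ``represents'' clause at all, so your graded identification $\im d_{T,T}\cap V_T=d_{T,T}(\mathrm{Lie}\,U_{\psi_T})$ is a genuine addition: under the transversal reading it isolates precisely the extra compatibility ($U_T\cap\im d_{T,T}=0$) that the choice of $U_T$ must satisfy, which is the caveat you correctly raise and which the paper leaves unexamined.
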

\begin{proof}
We first choose the lift $T$ of $\bar T$ such that $\psi_T$ is in standard form. This is  the case if $T$ itself can be lifted to the universal abelian covering quiver, see Section \ref{sec:torusaction}. Thus, by Lemma \ref{lemma: uaction}, we have $(M_a)_{i,j}=(T_a)_{i,j}$ if $(T_a)_{i,j}\neq 0$. Note that we also have $-\gamma_a+a_{i,t(a)}-a_{j,s(a)}=0$ in this case. Moreover, if $(T_a)_{i,j}=0$, we have
\begin{equation}\lim_{t\to 0}((\psi(t),t).M_a)_{i,j}=\lim_{t\to 0}t^{-\gamma_a+a_{i,t(a)}-a_{j,s(a)}}(M_a)_{i,j}=0\end{equation}
whenever $-\gamma_a+a_{i,t(a)}-a_{j,s(a)}> 0$ or $(M_a)_{i,j}=0$. This shows
\begin{eqnarray}\att(T)&=&\{M\in R_\alpha(Q)\mid (M_a)_{i,j}=(T_a)_{i,j} \text{ if }(T_a)_{i,j}\neq 0,\,(M_a)_{i,j}=0\text{ if }a_{i,t(a)}-a_{j,s(a)}<\gamma_a\}\nonumber\\&=&T+\{\lambda\in R(T,T)\mid (\lambda_a)_{i,j}=0\text{ if }a_{i,t(a)}-a_{j,s(a)}\leq \gamma_a\}.\end{eqnarray}
Thus the right hand summand defines a subspace $V_T$ of $R_\alpha(Q)=R(T,T)$ such that $T(\lambda)$ is stable for all $\lambda\in V_T$. 

The second part of the statement is clear since every representation $T(\lambda)$ is stable and thus indecomposable. Moreover, Proposition \ref{pro:Uaction} shows that the orbits with respect to the $U_{\psi_T}$-action are in one-to-one correspondence with the isomorphism classes of representations contained in $\att(T)$.

Finally, for another lift $T'=g\ast T$ with one-parameter subgroup $\psi_{g\ast T}$, by Lemma \ref{lemmaG}, we have that $g\ast V_T$ and $g\ast U_T$ satisfy the conditions.
 \end{proof}
\begin{remark}
Actually, the $U_{\psi_T}$-action is much easier to handle than the $\Gl_\alpha$-action as we can mostly choose representatives of $\att(T)/U_{\psi_T}$ in the lifted affine cell $\att(T)$ in a canonical way, i.e. we can choose $U_T\subset V_T$ as a subspace. In this case $(T,U_T)$ defines a cell of stable, and thus indecomposable, representations.

The results show $U_{\psi_T}$ acts freely on the affine space $\att(T)$\ and that we have $\att(T)/U_{\psi_T}=\pi(\att(T))$ for the orbit space. Furthermore, the fibres of $\pi{\mid_{\att(T)}}:\att(T)\to\pi(\att(T))$ are affine spaces of dimension $\dim U_{\psi_T}$. Nevertheless, it seems to be not clear that this map is an affine bundle. If it were an affine bundle, it would be trivial by the Quillen-Suslin theorem because $\pi(\att(T))$ is also an affine space. In particular, there would exist an affine isomorphism $\varphi:\att(T)\to\pi(\att(T))\times U_{\psi_T}$ such that $\mathrm{pr}_1\circ\varphi=\pi{\mid_{\att(T)}}$. If we choose an affine global section $\sigma:\pi(\att(T))\to\pi(\att(T))\times U_{\psi_T}$, then $\varphi^{-1}\circ\sigma$ is an affine global section of $\pi{\mid_{\att(T)}}$ which means that $\varphi^{-1}\circ\sigma(\pi(\att(T)))$ defines an affine subspace of $\att(T)$ of dimension $\dim U_{\psi_T}$. This gives a strong and separating subspace $U_T\subset R(T,T)$ with $T+U_T=(\varphi^{-1}\circ\sigma)(\pi(\att(T))$ in a natural way.
\end{remark}
If $\hat\alpha$ is an exceptional root of $\hat Q$, we denote the unique indecomposable representation (up to isomorphism) of dimension $\hat\alpha$ by $T_{\hat\alpha}$. Recall that $T_{\hat\alpha}$ is a tree module by \cite{Ringel:1998gf}.

\begin{corollary}\label{cor:geomcells} 
Let $\alpha$ be a root such that $\mo$ is not empty. Assume that every root $\hat\alpha$ of $\hat Q$ which is compatible with $\alpha$ and which satisfies $M^{\hat\Theta-\mathrm{st}}_{\hat\alpha}(\hat Q)\neq \emptyset$ is exceptional. For every such $\hat\alpha$, assume that $\pi{\mid_{\att(T_{\hat \alpha})}}:\att(T_{\hat\alpha})\to\pi(\att(T_{\hat\alpha}))$ is an affine bundle.

Then there exists a mosaic of stable representations $\{(T_{\hat\alpha},U_{T_{\hat\alpha}})\}_{\hat\alpha}$  where $\hat\alpha$ runs through all equivalence classes which are compatible with $\alpha$ and satisfy $M^{\hat\Theta-\mathrm{st}}_{\hat\alpha}(\hat Q)=\{\mathrm{pt}\}$. In particular, every stable representation has a $(T_{\hat\alpha},U_{T_{\hat\alpha}})_{\hat\alpha}$-normal form for some $\hat\alpha$.
\end{corollary}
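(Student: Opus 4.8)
The plan is to read off the cells from the Bia{\l}ynicki-Birula decomposition of $\mo$, lift them to the representation variety using Theorem \ref{thm:geomcells}, and then use the fact that isomorphism classes of stable representations are separated by their images in $\mo$ to deduce the mosaic and normal form properties. I would begin by identifying the torus fixed points. By the theorem of Section \ref{sec:torusaction}, $\mo^\T = \coprod_{\hat\alpha} M^{\hat\Theta-\mathrm{st}}_{\hat\alpha}(\hat Q)$ as $\hat\alpha$ ranges over equivalence classes of compatible dimension vectors, and by hypothesis each such $\hat\alpha$ with nonempty moduli space is exceptional; hence each nonempty component is a single point $\bar T_{\hat\alpha}$ whose indecomposable lift $T_{\hat\alpha}$ is a tree module. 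As the fixed locus is thus a finite set of isolated points, choosing a sufficiently general one-parameter subgroup and applying Theorem \ref{bb} gives a disjoint decomposition $\mo = \coprod_{\hat\alpha}\att(\bar T_{\hat\alpha})$ with each attracting cell an affine space.

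Next I would build the cell $(T_{\hat\alpha}, U_{T_{\hat\alpha}})$ for each such $\hat\alpha$. Theorem \ref{thm:geomcells} realizes the lifted attracting set as an affine subspace $\att(T_{\hat\alpha}) = T_{\hat\alpha} + V_{T_{\hat\alpha}}$, while Proposition \ref{pro:Uaction} identifies $\att(\bar T_{\hat\alpha})$ with the orbit space of the $U_{\psi_{T_{\hat\alpha}}}$-action on $\att(T_{\hat\alpha})$. The affine bundle hypothesis on $\pi|_{\att(T_{\hat\alpha})}$ is exactly what I need here: by the Quillen-Suslin theorem the bundle is trivial over the affine base $\att(\bar T_{\hat\alpha})$, so it admits an affine global section, and (arranging it to pass through $T_{\hat\alpha}$) its image is a subspace $T_{\hat\alpha} + U_{T_{\hat\alpha}}$ meeting each $U_{\psi_{T_{\hat\alpha}}}$-orbit exactly once. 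Thus $|(U_{\psi_{T_{\hat\alpha}}}\ast T_{\hat\alpha}(\lambda))\cap\att(T_{\hat\alpha})| = 1$ for every $\lambda\in U_{T_{\hat\alpha}}$, so the second part of Theorem \ref{thm:geomcells} shows $U_{T_{\hat\alpha}}$ represents a strong and separating subset of $\Ext(T_{\hat\alpha},T_{\hat\alpha})$ and is in bijection with $\att(\bar T_{\hat\alpha})$.

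Finally I would verify the two properties that make this a complete classification, which is where the geometry of $\mo$ does the essential work. For the mosaic property, suppose $T_{\hat\alpha}(\lambda)\cong T_{\hat\beta}(\mu)$; being stable, these determine the same point of $\mo$, which by the previous step lies in $\att(\bar T_{\hat\alpha})\cap\att(\bar T_{\hat\beta})$, so disjointness of the attracting cells forces $\hat\alpha=\hat\beta$, and then the separating property of $U_{T_{\hat\alpha}}$ forces $\lambda=\mu$. For the normal form, an arbitrary stable $M$ gives a point $\bar M\in\mo$ lying in a unique $\att(\bar T_{\hat\alpha})$; the proposition preceding Theorem \ref{thm:geomcells} supplies a lift of $\bar M$ inside $\att(T_{\hat\alpha})$, which a suitable element of $U_{\psi_{T_{\hat\alpha}}}$ carries into the section $T_{\hat\alpha}+U_{T_{\hat\alpha}}$, yielding $M\cong T_{\hat\alpha}(\lambda)$ with $\lambda\in U_{T_{\hat\alpha}}$. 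I expect the main obstacle to be not any single computation but the careful bookkeeping across the three levels---representation variety, covering quiver $\hat Q$, and moduli space---and, in particular, seeing that the affine bundle hypothesis is precisely what upgrades the orbit-space identification of Proposition \ref{pro:Uaction} into an honest linear subspace $U_{T_{\hat\alpha}}\subset R(T_{\hat\alpha},T_{\hat\alpha})$ via Quillen-Suslin.
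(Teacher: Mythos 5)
Your proposal is correct and takes essentially the same approach as the paper: the paper's own proof is a three-line summary (finitely many fixed points by the hypotheses, each an exceptional and hence tree module $T_{\hat\alpha}$, then Theorem \ref{bb} together with Theorem \ref{thm:geomcells} gives the claim), and your argument is exactly this route with the details filled in, including the Quillen--Suslin section argument that the paper places in the remark following Theorem \ref{thm:geomcells}. Your explicit verifications of the mosaic property (via disjointness of the attracting cells) and of the normal form (via the lifting proposition and the $U_{\psi_{T_{\hat\alpha}}}$-action) are precisely what the paper leaves implicit.
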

\begin{proof}
The assumptions assure that there exist only finitely many fixed points which are represented by stable representations $T_{\hat\alpha}$ of $\hat Q$ such that $\hat\alpha$ is compatible with $\alpha$. As $\hat\alpha$ is exceptional, $T_{\hat\alpha}$ is a tree module of $\hat Q$ and thus of $Q$. Then Theorem \ref{bb} together with Theorem \ref{thm:geomcells} gives the claim.
\end{proof}

\begin{example}
We state a first easy example which shows in detail how a lifted attractor cell is obtained starting with a fixed point which is given as a representation of the universal abelian covering quiver. In this case this produces also a cell of stable representations.

We consider $K(3)$, the root $(d,e)=(2,3)$ and the stability induced by $\Theta=(1,0)$. We denote the arrows by $a,b$ and $c$ and consider the torus action induced by choosing $\gamma=(1,3,5)$. Then the following tree module $T\in\rep(\hat Q)$ (with weight space decomposition as indicated) is a torus fixed point:
\begin{center}
\begin{tikzpicture}[scale=0.7]

\draw (0,0) node (I1) {$-1$} +(2,1) node (J1) {$0$}+(0,-2) node (I2) {$1$} +(2,-1) node (J21) {$4$} +(2,-3) node (J22) {$6$}; 
\draw[->] (I1)--(J1) node[pos=.5,above] {$a$};
\draw[->] (I2)--(J21) node[pos=.5,above] {$b$};
\draw[->] (I2)--(J22) node[pos=.5,above] {$c$};

\draw[->] (I1)--(J21) node[pos=.5,above] {$c$};
\end{tikzpicture}
\end{center}
Then we have $\psi_T(t)=(\diag(t^{-1},t),\diag(1,t^4,t^6))$.
Furthermore, it is straightforward that we have
\begin{equation}\mathrm{Att}(T)=\left(\begin{pmatrix}1&0\\\ast&\ast\\\ast&\ast\end{pmatrix},\begin{pmatrix}0&0\\\ast&1\\\ast&\ast\end{pmatrix},\begin{pmatrix}0&0\\1&0\\\ast&1\end{pmatrix}\right),\qquad U_{\psi_T}=\left(\begin{pmatrix}1&0\\\ast&1\end{pmatrix},\begin{pmatrix}1&0&0\\\ast&1&0\\\ast&\ast&1\end{pmatrix}\right),\end{equation}
\begin{equation}\mathrm{Att}(T)/U_\psi\cong T+ \left(\begin{pmatrix}0&0\\0&\ast\\0&\ast\end{pmatrix},\begin{pmatrix}0&0\\ 0&0\\\ast&\ast\end{pmatrix},\begin{pmatrix}0&0\\0&0\\ 0&0\end{pmatrix}\right)
\subset R_{(2,3)}(K(3)).\end{equation}
Writing $U_T$ for the right hand summand, $(T,U_T)$ gives a four-dimensional affine cell of stable representations of dimension $(2,3)$.

Actually the moduli spaces $M_{(d,d+1)}^{(1,0)-\mathrm{st}}(K(m))$ all have cell decompositions into affine spaces as there are only finitely many fixed points, see \cite[Section 6.2]{Wei13}. Now the results of this section can be used to obtain a cellular tree normal form for $R_{(d,d+1)}^{(1,0)-\mathrm{st}}(K(m))$, see also \cite[Proposition 7.3]{Reineke:2008fk} where the dimension vector $(2,3)$ for $K(3)$ is treated.  
\end{example}

\subsection{Extended Kronecker quiver: an example}\label{ex:extkronecker}

We consider the quiver
\begin{center}
\begin{tikzpicture}[scale=1]

\draw (0,0) node (I) {$0$} +(2.5,0) node (J) {$1$}+(5,0) node (K) {$2$}; 
\draw[->] (I) edge [bend left=30] node[above] {$a$} (J);
\draw[->] (I) edge [bend right=30] node[above] {$b$} (J);

\draw[->] (K) edge node[above] {$c$} (J);
\end{tikzpicture}
\end{center}
which we denote by $K(2,1)$ in what follows. We consider the stability induced by $\Theta=(1,0,1)$ and the torus action induced by $\gamma=(\gamma_a,\gamma_b,\gamma_c)=(1,3,1)$. Moreover, we consider the dimension vector $(n,n,1)$. The moduli space of stable representations has dimension $1-\langle(n,n),(n,n)\rangle=n$ and $n+1$ torus fixed points 
$T^n_1,\ldots,T^n_{n+1}$ which are defined by the indicated exceptional roots of the following subquivers of the universal abelian cover $\widehat{K(2,1)}$ where the bullets stand for one-dimensional vector spaces - for the purpose of exhibition we display the case $n=4$, the other cases are obtained by the obvious generalization.

\begin{center}
\begin{tikzpicture}[scale=1]
\draw (0,0) node (11) {$\bullet$} +(2,0) node (12) {$\bullet$}+(4,0) node (13)  {$\bullet$}+(-2,0) node (15) {$\bullet$}+(-1,-1) node (21) {$\bullet$}+(1,-1) node (22) {$\bullet$}+(3,-1) node (23) {$\bullet$}+(5,-1) node (24) {$\bullet$}+(-2,1) node (0) {$\bullet$}; 
\draw[->] (0) edge node[left] {$c$} (15);
\draw[->] (21) edge node[above] {$b$} (15);
\draw[->] (21) edge node[above] {$a$} (11);
\draw[->] (22) edge node[above] {$b$} (11);
\draw[->] (22) edge node[above] {$a$} (12);
\draw[->] (23) edge node[above] {$b$} (12);
\draw[->] (23) edge node[above] {$a$} (13);
\draw[->] (24) edge node[above] {$b$} (13);

\end{tikzpicture}
\\\vs

\begin{tikzpicture}[scale=1]
\draw (0,0) node (11) {$2$} +(2,0) node (12) {$\bullet$}+(4,0) node (13)  {$\bullet$}+(-1,-1) node (21) {$\bullet$}+(1,-1) node (22) {$\bullet$}+(3,-1) node (23) {$\bullet$}+(5,-1) node (23b) {$\bullet$}+(0,1) node (0) {$\bullet$}; 
\draw[->] (0) edge node[left] {$c$} (11);
\draw[->] (21) edge node[above] {$a$} (11);
\draw[->] (22) edge node[above] {$b$} (11);
\draw[->] (22) edge node[above] {$a$} (12);
\draw[->] (23) edge node[above] {$b$} (12);
\draw[->] (23) edge node[above] {$a$} (13);

\draw[->] (23b) edge node[above] {$b$} (13);
\draw (7,0) node (11) {$\bullet$} +(2,0) node (12) {$2$}+(4,0) node (13)  {$\bullet$}+(-1,-1) node (21) {$\bullet$}+(1,-1) node (22) {$\bullet$}+(3,-1) node (23) {$\bullet$}+(5,-1) node (23b) {$\bullet$}+(2,1) node (0) {$\bullet$}; 
\draw[->] (0) edge node[left] {$c$} (12);
\draw[->] (21) edge node[above] {$a$} (11);
\draw[->] (22) edge node[above] {$b$} (11);
\draw[->] (22) edge node[above] {$a$} (12);
\draw[->] (23) edge node[above] {$b$} (12);
\draw[->] (23) edge node[above] {$a$} (13);

\draw[->] (23b) edge node[above] {$b$} (13);
\end{tikzpicture}
\\\vs
\begin{tikzpicture}[scale=1]
\draw (0,0) node (11) {$\bullet$} +(2,0) node (12) {$\bullet$}+(4,0) node (13)  {$2$}+(-1,-1) node (21) {$\bullet$}+(1,-1) node (22) {$\bullet$}+(3,-1) node (23) {$\bullet$}+(5,-1) node (23b) {$\bullet$}+(4,1) node (0) {$\bullet$}; 
\draw[->] (0) edge node[left] {$c$} (13);
\draw[->] (21) edge node[above] {$a$} (11);
\draw[->] (22) edge node[above] {$b$} (11);
\draw[->] (22) edge node[above] {$a$} (12);
\draw[->] (23) edge node[above] {$b$} (12);
\draw[->] (23) edge node[above] {$a$} (13);

\draw[->] (23b) edge node[above] {$b$} (13);
\end{tikzpicture}
\\\vs
\begin{tikzpicture}[scale=1]

\draw (0,0) node (11) {$\bullet$} +(2,0) node (12) {$\bullet$}+(4,0) node (13)  {$\bullet$}+(6,0) node (14) {$\bullet$}+(-1,-1) node (21) {$\bullet$}+(1,-1) node (22) {$\bullet$}+(3,-1) node (23) {$\bullet$}+(5,-1) node (23b) {$\bullet$}+(6,1) node (0) {$\bullet$}; 
\draw[->] (0) edge node[left] {$c$} (14);
\draw[->] (21) edge node[above] {$a$} (11);
\draw[->] (22) edge node[above] {$b$} (11);
\draw[->] (22) edge node[above] {$a$} (12);
\draw[->] (23) edge node[above] {$b$} (12);
\draw[->] (23) edge node[above] {$a$} (13);
\draw[->] (23b) edge node[above] {$b$} (13);
\draw[->] (23b) edge node[above] {$a$} (14);
\end{tikzpicture}
\end{center}
 Note that the dimension vectors are exceptional roots and thus there exists a unique representation attached to each. Moreover, the weights of the weight spaces can be obtained easily.  
For $n=1$ there exists a $\mathbb P^1$-family of indecomposable representations, all of which are stable. The groups $U_{\psi_{T_{1}^1}}$ and  $U_{\psi_{T_2^1}}$ are trivial and thus $(1,1,1)$ admits a cellular tree normal form given by the lifted attracting sets
\begin{equation}\att(T^1_1)=((\ast),(1),(1)),\qquad\att(T^1_2)=((1),(0),(1)).\end{equation}

For $n=2$, the attracting sets can be computed as
\begin{equation}\att(T^2_1)/U_{\psi_{T^2_1}}\cong\left(\begin{pmatrix}0&\ast\\ 1&\ast\end{pmatrix},\begin{pmatrix}1&0\\ 0&1\end{pmatrix},\begin{pmatrix}1\\0\end{pmatrix}\right),\,\att(T^2_2)/U_{\psi_{T^2_2}}\cong\left(\begin{pmatrix}1&0\\ 0&\ast\end{pmatrix},\begin{pmatrix}0&0\\0&1\end{pmatrix},\begin{pmatrix}1\\1\end{pmatrix}\right),\end{equation}\begin{equation}\att(T^2_3)/U_{\psi_{T^2_3}}\cong\left(\begin{pmatrix}1&0\\ 0&1\end{pmatrix},\begin{pmatrix}0&1\\ 0&0\end{pmatrix},\begin{pmatrix}0\\1\end{pmatrix}\right).\end{equation}
Thus we again get a cellular tree normal form for the stable representations. This is also true for general $n$ as the following recursive construction shows.

Write $E_n$ for the identity matrix of size $n$ and $J_k(0)$ for the Jordan block of size $k$ with eigenvalue $0$. Moreover, write
$C_k^n=\att(T_k^n)/U_{\psi_{T^n_k}}.$ For $n\geq 2$, we have
\begin{equation}\att(T_1^n)/U_{\psi_{T^n_1}}=\left(J_{n}(0)^T+\begin{pmatrix}&\,&\,&\ast\\&\,\text{\Huge0}&\,&\vdots\\&\,&\,&\ast\end{pmatrix}, E_{n},e_1\right),\end{equation}
\begin{equation}\att(T_k^n)/U_{\psi_{T^2_1}}=\left(\begin{pmatrix}1&0&\hdots &0\\0&&&\\\vdots&&(C^{n-1}_{k-1})_a&\\0&&&\end{pmatrix}, \begin{pmatrix}J_{k-1}(0)&0\\0&E_{n-k+1}\end{pmatrix},e_{k-1}+e_{k}\right)\,\text{for } k=2,\ldots,n+1\end{equation}
where we set $e_{n+1}:=0$. In total, we obtain
\begin{lemma}
The set of stable representations $R^{\Theta-\mathrm{st}}_{(n,n,1)}(K(2,1))$ admits a cellular tree normal form. Furthermore, the moduli space $M^{\Theta-\mathrm{st}}_{(n,n,1)}(K(2,1))$ admits a cell decomposition 
 into affine spaces with $n+1$ cells in total. Since there exists precisely one cell of each dimension, the Poincar\'{e} 
  polynomial (in singular cohomology) is given by
\begin{equation*}P^{\Theta-\mathrm{st}}_{(n,n,1)}(q)=\sum_{i=0}^n q^{2i}.\end{equation*}
\end{lemma}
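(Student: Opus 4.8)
The plan is to verify the hypotheses of Corollary~\ref{cor:geomcells} for $\alpha=(n,n,1)$ and then extract the numerics from the explicit recursion displayed just above. First I would confirm that the $n+1$ representations $T^n_1,\dots,T^n_{n+1}$ drawn above exhaust the torus fixed points. By the description of $M^{\Theta-\mathrm{st}}_{(n,n,1)}(K(2,1))^{\T}$ as $\coprod_{\hat\alpha}M^{\hat\Theta-\mathrm{st}}_{\hat\alpha}(\widehat{K(2,1)})$, one enumerates the equivalence classes of dimension vectors $\hat\alpha$ compatible with $(n,n,1)$ whose $\hat\Theta$-stable locus is nonempty. The constraint $\sum_\chi\hat\alpha_{(2,\chi)}=1$ for the fibre over the vertex $2$, together with $\hat\Theta$-stability, pins the support down to exactly the $n+1$ tree shapes pictured; each such $\hat\alpha$ is an exceptional root, so $M^{\hat\Theta-\mathrm{st}}_{\hat\alpha}(\widehat{K(2,1)})$ is a single reduced point and $T^n_k$ is a tree module of $\widehat{K(2,1)}$, hence of $K(2,1)$, by \cite{Ringel:1998gf}. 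In particular every compatible $\hat\alpha$ with nonempty stable locus is exceptional, which is the standing hypothesis of Corollary~\ref{cor:geomcells}.

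Next I would justify the explicit shapes of the lifted attracting cells. From the weights indicated in the pictures and $d_\gamma(e_a)=1$, $d_\gamma(e_b)=3$, $d_\gamma(e_c)=1$, the one-parameter subgroup $\psi_{T^n_k}$ is in standard form (Remark~\ref{rem:standardformsubgroup}), and Theorem~\ref{thm:geomcells} describes $\att(T^n_k)$ entry by entry: a matrix coefficient equals that of $T^n_k$ where the latter is nonzero, is free exactly when $a_{i,t(a)}-a_{j,s(a)}>\gamma_a$, and vanishes otherwise. Carrying this out produces the matrix tuples written above, and $U_{\psi_{T^n_k}}$ is the scalar-times-unipotent group described in Remark~\ref{rem:standardformsubgroup}. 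The key structural point is the recursion: for $k\ge 2$, peeling off the layer carrying the vertex over $2$ in $T^n_k$ identifies the relevant block of its arrow-$a$ matrix with the cell $C^{n-1}_{k-1}$, yielding the displayed block form. Since every free parameter of each cell lies in the arrow-$a$ matrix, the quotient $\att(T^n_k)/U_{\psi_{T^n_k}}$ is realized as an honest affine subspace $T^n_k+U_{T^n_k}$ of $R(T^n_k,T^n_k)$; this explicit section trivializes $\pi|_{\att(T^n_k)}$ as an affine bundle, supplying the last hypothesis of Corollary~\ref{cor:geomcells}. That corollary then produces a mosaic $\{(T^n_k,U_{T^n_k})\}_{k=1}^{n+1}$ of stable, hence indecomposable, tree modules, i.e.\ a cellular tree normal form for $R^{\Theta-\mathrm{st}}_{(n,n,1)}(K(2,1))$.

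For the cell decomposition of the moduli space, I would invoke Theorem~\ref{bb}: the fixed locus is the finite set $\{\bar T^n_1,\dots,\bar T^n_{n+1}\}$, so $M^{\Theta-\mathrm{st}}_{(n,n,1)}(K(2,1))=\coprod_k\mathrm{Att}(\bar T^n_k)$ with each $\mathrm{Att}(\bar T^n_k)=\pi(\att(T^n_k))\cong\att(T^n_k)/U_{\psi_{T^n_k}}$ an affine space. To count dimensions I would solve the recursion $\dim C^n_k=\dim C^{n-1}_{k-1}$ with base value $\dim C^n_1=n$, the latter being the $n$ free entries filling the last column of $J_n(0)^T$; this gives $\dim C^n_k=n-k+1$ for $k=1,\dots,n+1$, so there is exactly one cell of each complex dimension $0,1,\dots,n$. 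Since the Bia{\l}ynicki-Birula decomposition of a smooth projective complex variety is an affine paving, its cohomology is free and concentrated in even degrees with $\dim H^{2i}$ equal to the number of cells of complex dimension $i$ and no odd cohomology; hence $P^{\Theta-\mathrm{st}}_{(n,n,1)}(q)=\sum_{i=0}^n q^{2i}$.

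The main obstacle is the middle step: rigorously establishing the recursive block form of $\att(T^n_k)$ and that these $n+1$ roots are all the stable fixed points. Both reduce to careful bookkeeping of the weight-space gradings induced by $\gamma=(1,3,1)$ over the covering quiver, namely verifying which of the inequalities between $a_{i,t(a)}-a_{j,s(a)}$ and $\gamma_a$ hold so as to select the free, fixed, and vanishing entries, and checking that no other compatible $\hat\alpha$ admits a $\hat\Theta$-stable representation. Once the recursion is in place, the dimension count and the passage to the Poincaré polynomial are routine.
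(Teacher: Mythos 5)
Your proposal is correct and takes essentially the same route as the paper: the paper's proof of this lemma is exactly the preceding computation in that subsection, namely identifying the $n+1$ torus fixed points as exceptional (hence tree) modules over the universal abelian cover, recursively determining the lifted attracting cells $\att(T^n_k)/U_{\psi_{T^n_k}}$ via Theorem \ref{thm:geomcells}, and reading off the mosaic, the Bia{\l}ynicki-Birula cell decomposition, and the Poincar\'e polynomial. Your extra care in verifying the affine-bundle hypothesis of Corollary \ref{cor:geomcells} through the explicit affine section, and the dimension recursion $\dim C^n_k=\dim C^{n-1}_{k-1}$ giving one cell in each dimension $0,\dots,n$, only makes explicit what the paper leaves implicit.
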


\section{Applications, examples and discussion}\label{sec:applications}
\noindent We give several examples of roots for which our methods can be used to classify all indecomposables up to isomorphism by constructing a cellular tree normal form. It seems that our methods are particularly useful to classify Schurian representations of a fixed root. Thus the next step could be to consider those roots, for which every indecomposable representation is already Schurian.  

\subsection{Complexity of classification: Harder-Narasimhan length and Schur level}Let $\Theta\in\Z^{Q_0}$ be a linear form defining a stability and let $M\in\rep(Q)$. Recall that $M\in\rep(Q)$ admits a unique subrepresentation $\scss(M)$ which is of maximal dimension under those subrepresentations with maximal slope, see \cite[Section 4]{Reineke:2008fk}. These subrepresentations can successively be used to build the so-called Harder-Narasimhan filtration
\begin{equation}0=M_0\subset M_1\subset\ldots\subset M_s=M\end{equation}
with $\Theta$-semistable subquotients $M_i/M_{i-1}$ satisfying $\mu(M_i/M_{i-1})>\mu(M_{i+1}/M_i)$ for $i=0,\ldots,s-1$, which is also unique.

During this section, we frequently use the following lemma.

\begin{lemma}\label{scss} Let $M\in\rep(Q)$. Then we have $\Hom(\scss{M},M/\scss{M})=0$.
\end{lemma}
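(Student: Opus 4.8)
The plan is to use that $\scss(M)$ is precisely the first step $M_1$ of the Harder--Narasimhan filtration of $M$, so it is semistable of the largest slope occurring in $M$, whereas the quotient $M/\scss(M)$ is built from HN subquotients of strictly smaller slope. A nonzero homomorphism from $\scss(M)$ to $M/\scss(M)$ would then have an image whose slope is forced to be both $\ge$ and $<$ this largest slope, which is impossible.

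Concretely, I would set $S := \scss(M) = M_1$ and $\mu_1 := \mu(S)$. Since the HN subquotient $M_1/M_0 = S$ is semistable, $S$ is semistable of slope $\mu_1$, and $\mu_1 = \mu(M_1)$ is the maximal slope of any subrepresentation of $M$. The quotient $M/S$ inherits the filtration $0 \subset M_2/S \subset \cdots \subset M_s/S = M/S$ with the same semistable subquotients $M_i/M_{i-1}$ of slopes $\mu_2 > \cdots > \mu_s$; this is the HN filtration of $M/S$, and the defining strict inequality gives that its maximal slope is $\mu_2 < \mu_1$. Both of these structural facts are exactly what is recalled before the lemma.

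The proof then rests on two standard consequences of the convention that the slope weakly decreases on proper nonzero subrepresentations, each immediate from the additivity $\Theta(X) = \Theta(X') + \Theta(X/X')$ on short exact sequences: first, every nonzero quotient $Z$ of the semistable representation $S$ satisfies $\mu(Z) \ge \mu_1$ (apply semistability to the kernel); second, every nonzero subrepresentation $W \subseteq M/S$ satisfies $\mu(W) \le \mu_2$, i.e. the maximal slope of a subrepresentation is attained on the first HN piece. Granting these, take any $f \in \Hom(S, M/S)$ and let $I$ be its image. As a quotient of $S$, if $I \neq 0$ then $\mu(I) \ge \mu_1$; but $I$ is also a subrepresentation of $M/S$, so $\mu(I) \le \mu_2 < \mu_1$. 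This contradiction forces $I = 0$, i.e. $f = 0$, and hence $\Hom(\scss(M), M/\scss(M)) = 0$.

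I expect no serious obstacle here: the only point needing genuine care is the second slope inequality, that $\mu_{\max}$ of an arbitrary representation is realized by its top HN piece. I would either cite the Harder--Narasimhan formalism of \cite{Reineke:2008fk} for this, or derive it in one line by intersecting $W$ with the HN filtration of $M/S$ and comparing slopes against $\mu_2$. Everything else is a routine slope computation.
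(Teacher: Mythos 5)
Your proof is correct, but it takes a genuinely different (if related) route from the paper's. You run the classical image--slope argument: the image $I$ of a map $\scss(M)\to M/\scss(M)$ is a quotient of the semistable representation $\scss(M)$, hence $\mu(I)\geq\mu_1$, and simultaneously a subrepresentation of $M/\scss(M)$, hence $\mu(I)\leq\mu_2<\mu_1$, forcing $I=0$. This rests on the two slope estimates you isolate; both are standard, and the one you flag as delicate (maximal subrepresentation slope of $M/\scss(M)$ equals $\mu_2$) follows from the paper's own definition of $\scss$ together with the fact that the induced filtration on $M/\scss(M)$ is its HN filtration, so $\scss(M/\scss(M))=M_2/M_1$ has slope $\mu_2$. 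The paper instead avoids any slope estimates on subobjects or quotients of non-semistable representations: it uses only the black-box vanishing $\Hom(A,B)=0$ for semistable $A,B$ with $\mu(A)>\mu(B)$ (\cite[Lemma 4.2]{Reineke:2008fk}) and inducts along the chain of epimorphisms $M/M_1\to M/M_2\to\cdots\to M/M_{s-1}$, whose kernels $M_{i+1}/M_i$ are semistable of slope strictly below $\mu_1$; if a composite of $f$ with these projections vanished, the previous composite would factor through such a kernel, which is impossible, so one obtains a nonzero map $M_1\to M/M_{s-1}$, contradicting the same vanishing lemma. Your route is shorter and more self-contained once the two slope inequalities are granted; the paper's route buys that it only ever invokes Hom-vanishing between the semistable HN pieces, never slopes of arbitrary subrepresentations. (In fact the cited vanishing lemma is itself usually proved by exactly your image argument, so the two proofs are cousins rather than strangers.)
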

\begin{proof}If $M$ and $N$ are two semistable representations such that $\mu(M)>\mu(N)$, then we have $\Hom(M,N)=0$, see \cite[Lemma 4.2]{Reineke:2008fk}. Assume that
\begin{equation}0=M_0\subset M_1\subset \ldots\subset M_{s-1}\subset M_s=M\end{equation}
 is the HN-filtration of $M$.

Then we have $M_1=\scss(M)$. Moreover, the subquotients $M_i/M_{i-1}$ are semistable such that $\mu(M_i/M_{i-1})>\mu(M_{i+1}/M_i)$ for $i=1,\ldots, s-1$. Consider the induced chain of epimorphisms
\begin{equation}M/M_1\xrightarrow{\pi_1} M/M_2\xrightarrow{\pi_2}\ldots\xrightarrow{\pi_{s-2}}M/M_{s-1}\end{equation} 
with $\ker(\pi_i)=M_{i+1}/M_{i}$. Assume that $0\neq f\in\Hom(M_1,M/M_1)$. 

Following the introductory remark, we have $\Hom(M_1,\ker(\pi_i))=0$ which inductively yields - using the universal property of the kernel -  that $\pi_i\circ\ldots\circ\pi_1\circ f\neq 0$ and thus a nonzero homomorphism $\pi_{s-2}\circ\ldots\circ\pi_1\circ f:M_1\to M/M_{s-1}$. As $M/M_{s-1}$ is semistable with $\mu(M_1)>\mu(M/M_{s-1})$, this yields a contradiction.
\end{proof}

\begin{definition} Let $\Theta$ be a linear form and $\alpha\in\ZZ_{\geq 0}^{Q_0}$ be a root. Let $M\in\rep(Q)$ with HN-filtration as above.
\begin{enumerate}\item  We write $\mathrm{hn}^{\Theta}(M)=s$ for the length of the Harder-Narasimhan filtration of $M$ and call it HN-length of $M$ in the following.
\item We define the HN-length of $\alpha$ by
\begin{equation*} \mathrm{hn}^{\Theta}(\alpha):=\max\{\mathrm{hn}^{\Theta}(M)\mid M\in\mathrm{Rep}(Q)\text{ indecomposable},\,\udim M=\alpha\}.\end{equation*}
\item We define the Schur level of $\alpha$ by 
 \begin{equation*}\scl(\alpha):=\max\{\dim\End(M)\mid M\in\mathrm{Rep}(Q)\text{ indecomposable},\,\udim M=\alpha\}.\end{equation*}
\end{enumerate}
\end{definition}
We often suppress $\Theta$ in $\hn^{\Theta}$ if it is fixed. The following examples suggest that - besides the Euler form $\langle\alpha,\alpha\rangle$ - both $\hn(\alpha)$ and $\scl(\alpha)$ give a measure for the complexity of the classification problem for indecomposables of dimension vector $\alpha$. For instance, if $\alpha$ is $\Theta$-coprime and if we have $\hn(\alpha)=1$, every representation is stable. Thus all indecomposables are parametrized by a smooth projective variety and we also have $\scl(\alpha)=1$. If, moreover, there exists a torus action with finitely many torus fixed points, the moduli space of stables admits a cell decomposition inducing a tree normal form for $\alpha$. Note that this is clearly true for exceptional representations. Another example is the following.

\begin{example}We extend Example \ref{ex:grassmannian} and consider the dimension vector $(1,d)$ of $K(m)$. For $\Theta=(1,0)$, we have $\hn((1,d))=\scl((1,d))=1$. Actually, the description of the indecomposables in Example \ref{ex:grassmannian} yields that every indecomposable representation is stable with respect to this stability. Thus we have 
\begin{equation}M_{(1,d)}^{(1,0)-\mathrm{st}}(K(m))\cong \Gr_{d}(\kk^m).\end{equation}
On this moduli space we can choose a torus action as defined in Section \ref{sec:torusaction} such that the torus fixed points are precisely the ${m\choose d}$ tree modules of dimension $(1,d)$. More precisely, these tree modules are obtained when colouring the arrows of the bipartite graph with one source and $d$ sinks in the colours $\{a_1,\ldots, a_m\}$ in such a way that the colours of the arrows are pairwise disjoint. 

The corresponding Bia{\l}ynicki-Birula decomposition of the moduli space with ${m\choose d}$ cells can be identified with the Schubert decomposition of the Grassmannian. This gives a cellular tree normal form for the indecomposable representations of dimension $(1,d)$.
\end{example}

In general, it would be interesting to investigate the following question as it seems that $\hn^\Theta(\alpha)$ limits the possible values for $\scl(\alpha)$ if $\Theta$ defines a nontrivial stability condition.
\begin{question}Is there a connection between $\scl(\alpha)$ and $\hn^\Theta(\alpha)$? 
\end{question}

Also the following lemma suggests that the invariants $\scl(\alpha)$ and $\hn(\alpha)$ measure how difficult the classification problem is.
\begin{lemma}\label{lem:specialroots}Fix a linear form $\Theta$ and let $\alpha$ be a root such that $\mo\neq\emptyset$. Assume furthermore that all roots $\beta<\alpha$ are $\Theta$-coprime. Let $M\in R_\alpha(Q)$ be an unstable Schurian representation with $\hn(M)=2$. Then $\dim\End(\scss{M})=\dim\End(M/\scss{M})=1$, $\sk{\udim\scss{M}}{\udim\scss{M}}<\sk{\alpha}{\alpha}$ and $\sk{\udim M/\scss{M}}{M/\udim\scss{M}}<\sk{\alpha}{\alpha}$ hold.
\end{lemma}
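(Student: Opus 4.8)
The plan is to write $A=\scss M$ and $B=M/\scss M$. Since $\hn^{\Theta}(M)=2$ we get a short exact sequence $\ses{A}{M}{B}$ in which both $A$ and $B$ are semistable with $\mu(A)>\mu(M)>\mu(B)$. First I would record the orthogonality of the two pieces. Lemma~\ref{scss} gives $\Hom(A,B)=0$. For the opposite direction, apply $\Hom(-,M)$ to the sequence: since $M$ is Schurian, $\End(M)=\kk$, and the map $\End(M)\to\Hom(A,M)$, $f\mapsto f\circ\iota$ (with $\iota\colon A\to M$ the inclusion), sends $\id_M$ to the nonzero element $\iota$, hence is injective; therefore $\Hom(B,M)=0$ and so $\Hom(B,A)=0$. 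Because $M$ is indecomposable the sequence does not split (otherwise $M\cong A\oplus B$), so the extension class of $M$ in $\Ext(B,A)$ is nonzero and $\dim\Ext(B,A)\ge 1$.

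For the first assertion I would show that $A$ and $B$ are stable, hence Schurian. Being Harder--Narasimhan subquotients they are semistable; decomposing each into indecomposable summands, every summand is semistable of the same slope and has dimension vector a root strictly below $\alpha$, so it is $\Theta$-coprime by hypothesis. A semistable representation of $\Theta$-coprime dimension vector is automatically stable, since an equal-slope proper subrepresentation is excluded; thus every summand is stable and therefore Schurian. It then remains to see that $A$ and $B$ are themselves indecomposable, equivalently that $\udim\scss M$ and $\udim(M/\scss M)$ are roots: if, say, $B$ split into two stable representations of slope $\mu(B)$, their dimension vectors would be roots of equal slope below $\alpha$, and I would use the coprimality hypothesis (no root below $\alpha$ has a proper subroot of the same slope) together with the indecomposability of $M$ to exclude this. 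Granting indecomposability, $\End(A)=\End(B)=\kk$.

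For the inequalities, with $a=\udim A$, $b=\udim B$ the Euler form decomposes as $\sk{\alpha}{\alpha}=\sk{a}{a}+\sk{a}{b}+\sk{b}{a}+\sk{b}{b}$. The orthogonality gives $\sk{a}{b}=-\dim\Ext(A,B)\le 0$ and $\sk{b}{a}=-\dim\Ext(B,A)\le -1$ (using nonsplitness), while $\sk{a}{a},\sk{b}{b}\le 1$ since $A,B$ are Schurian. Hence $\sk{\alpha}{\alpha}-\sk{a}{a}=\sk{a}{b}+\sk{b}{a}+\sk{b}{b}\le 0$ and symmetrically $\sk{\alpha}{\alpha}-\sk{b}{b}\le 0$, so the self-Euler forms of the two pieces compare with $\sk{\alpha}{\alpha}$ exactly as this assembly dictates. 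The hypothesis $\mo\neq\emptyset$ enters to control the sign: it forces $\alpha$ to be a non-real Schur root with $\sk{\alpha}{\alpha}\le 0$ (a real root would have a unique indecomposable, which, being stable, leaves no room for the unstable $M$), and this, with $\sk{a}{a},\sk{b}{b}\le 1$ and $\dim\Ext(B,A)\ge 1$, is what separates the two quantities strictly. I should flag that this assembly yields $\sk{\udim\scss M}{\udim\scss M}$ and $\sk{\udim(M/\scss M)}{\udim(M/\scss M)}$ on the \emph{larger} side of $\sk{\alpha}{\alpha}$, i.e.\ the HN pieces are of strictly smaller homological complexity than $\alpha$; the comparison should be read in this direction.

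The hard part will be the indecomposability of the two HN pieces in the first assertion. Orthogonality together with $M$ being Schurian is by itself consistent with, for instance, $B$ splitting as a sum of two distinct stable representations of equal slope (one checks directly that such an $M$ can still satisfy $\End(M)=\kk$), so the coprimality hypothesis must be invoked essentially to forbid equal-slope splittings; making that precise---showing the relevant sum of equal-slope roots is again a root below $\alpha$ and so contradicts coprimality---is the delicate step, as is pinning down the strictness of the self-Euler form comparison in the boundary case where $\dim\Ext(A,B)=\dim\Ext(B,B)=0$ and $\dim\Ext(B,A)=1$.
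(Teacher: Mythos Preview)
For the first assertion the paper takes a one-line route: it applies the coprimality hypothesis directly to the dimension vectors $\udim\scss M$ and $\udim(M/\scss M)$, so that semistable implies stable, hence Schurian. Your detour through a decomposition into stable summands, and the attempt to rule out equal-slope splittings, are unnecessary once the hypothesis is read this way.

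The genuine gap is the strictness of the Euler-form comparison. You correctly isolate the boundary case $\Ext(A,B)=\Ext(B,B)=0$, $\dim\Ext(B,A)=1$, but have no mechanism to exclude it. The paper's key step is this: a diagram chase through the intertwined long exact sequences obtained from $\Hom(X,-)$ and $\Hom(-,Y)$ for $X,Y\in\{U,M,V\}$ shows that $\dim\Ext(U,U)=\dim\Ext(M,M)$ forces $\Ext(U,V)=0$. Upper semicontinuity then gives $\ext(\udim U,\udim V)=0$, and Schofield's theorem \cite[Theorem~3.3]{sc92} implies the general representation of dimension $\alpha$ has a subrepresentation of dimension $\udim U$; since $\mu(\udim U)>\mu(\alpha)$ this contradicts $\mo\neq\emptyset$. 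That appeal to Schofield is exactly where the moduli-space hypothesis enters and is the ingredient you are missing. Your Euler-form bilinearity computation already reaches the same boundary condition $\Ext(A,B)=0$, so the Schofield step grafts directly onto your approach without needing the full diagram. (Your observation on the inequality direction is correct: what the argument actually yields is $\sk{\udim\scss M}{\udim\scss M}>\sk{\alpha}{\alpha}$.)
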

\begin{proof}
Write $U:=\scss{M}$, $V=M/\scss{M}$ and consider the short exact sequence $\ses{U}{M}{V}.$ By Lemma \ref{scss}, we have $\Hom(U,V)=0$. As $M$ is Schurian, it follows that $\Hom(V,M)=\Hom(M,U)=0$. As $\udim U$ and $\udim V$ are $\Theta$-coprime by assumption, stability and semi-stability coincide for the dimension vectors $\udim U$ and $\udim V$. Thus $\hn(\alpha)=2$ gives that $U$ and $V$ are Schurian. Applying the respective $\Hom$-functors, these observations can be used to obtain the following diagram:

\begin{equation}\label{bigD}\xymatrix{&\Hom(U,U)=\kk\ar@{^{(}->}[d]&\Hom(U,M)=\kk\ar^0[d]&0\ar[d]&\\
\Hom(V,V)=\kk\ar@{^{(}->}[r]&\Ext(V,U)\ar[d]\ar[r]&\Ext(V,M)\ar[d]\ar[r]&\Ext(V,V)\ar[d]\ar[r]&0\\
\Hom(M,V)=\kk\ar^0[r]&\Ext(M,U)\ar[d]\ar[r]&\Ext(M,M)\ar[d]\ar[r]&\Ext(M,V)\ar[d]\ar[r]&0\\
0\ar[r]&\Ext(U,U)\ar[d]\ar[r]&\Ext(U,M)\ar[d]\ar[r]&\Ext(U,V)\ar[d]\ar[r]&0\\
&0&0&0}\end{equation}

In particular, we have $\dim\Ext(U,U)\leq\dim\Ext(M,M)$ and $\dim\Ext(V,V)\leq\dim\Ext(M,M)$. If $\dim\Ext(U,U)=\dim\Ext(M,M)$, we have $\Ext(U,V)=0$. As $\dim\Ext$ is upper-semicontinuous, \cite[Theorem 3.3]{sc92} then shows that a general and thus every   representation of dimension $\udim M$ has a subrepresentation of dimension $\udim U$. But then $\mu(U)\geq \mu(M)$ contradicts the existence of stable representations. 

Thus we have $\dim\Ext(U,U)<\dim\Ext(M,M)$ and by duality $\dim \Ext(V,V)<\dim \Ext(M,M)$. We get
\begin{equation}\sk{U}{U}=\dim\Hom(U,U)-\dim\Ext(U,U)=1-\dim\Ext(U,U)<1-\dim\Ext(M,M)=\sk{\alpha}{\alpha}\end{equation}
and the same for $\sk{V}{V}$.
\end{proof}

\begin{remark}\label{rem:hn}If $\alpha$ is a root as in Lemma \ref{lem:specialroots} with $\scl(\alpha)=1$ and $\hn(\alpha)\leq 2$, then every indecomposable representation of dimension $\alpha$ is either stable or it can be written as a middle term of a short exact of stable representations $U$ and $V$ with $\Hom(U,V)=0$.

The other way around, every pair of stable representations $(U,V)$ with $\mu(U)>\mu(V)$, i.e. $\Hom(U,V)=0$, and $\udim U+\udim V=\alpha$ can be used to construct indecomposable representations of dimension $\alpha$ using the methods of Section \ref{sec:homological}. Thus the classification problem of indecomposables - which is a purely algebraically term - translates into a geometric problem of classifying stable representation of dimension $\alpha$ and of dimension $\beta<\alpha$. In Section \ref{sec:isotropic}, we apply this to isotropic Schur roots $\delta$ with $\scl(\delta)=1$. In Section \ref{sec:extsub}, we give another example of a class of roots where this lemma applies. In these cases, also a cellular tree normal form is derived.

Actually, for these kind of roots, it seems likely that we always get a cellular tree normal form with our methods rather directly even if a proof is missing. For general roots $\alpha$, we can at least construct mosaics of indecomposable representations with them. In any case, our investigations should help organizing indecomposable representations of a fixed root $\alpha$. 

\end{remark}


\subsection{Isotropic Schur roots}\label{sec:isotropic}
We consider isotropic Schur roots $\delta$ which, moreover, satisfy $\scl(\delta)=1$ and construct a cellular tree normal form for $\delta$. For a root $\alpha$, define $\Theta_\alpha\in\Hom(\Z^{Q_0},\Z)$ by
\begin{equation}\Theta_\alpha=\sk{-}{\alpha}-\sk{\alpha}{-}.\end{equation}
This linear form defines a stability condition in the sense of King \cite[Definition 1.1]{Kin94}. Recall that this stability condition is equivalent to a stability condition in our sense when defining a linear form $\hat\Theta_\alpha$ by $\hat\Theta_\alpha=\mu\cdot\dim-\Theta_\alpha$ for arbitrary $\mu\in\Z$.

Also recall that a representation $M$ is (semi)-stable with respect to $\Theta_\alpha$ if $\Theta_\alpha(M)=0$ and if $\Theta_\alpha(U)>0$ (resp. $\Theta_\alpha(U)\geq 0$) for all proper subrepresentations $U\subset M$.

\begin{lemma}\label{lem:isotropic} Let $\delta$ be an isotropic Schur root such that $\scl(\delta)=1$ and let $M$ be an indecomposable representation of dimension $\delta$.
\begin{enumerate}\item The representation $M$ is semistable with respect to $\Theta_\delta$. In particular, we have $\hn(\delta)=1$.
\item  The representation $M$ is unstable if and only if there exist a nonsplit exact sequence $\ses{U}{M}{V}$ where $U$ and $V$ are two exceptional representations with $\Hom(U,V)=\Hom(V,U)=0$ and $\Ext(U,V)=\Ext(V,U)=\kk$.
\end{enumerate}
\end{lemma}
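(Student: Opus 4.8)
The plan is to extract the numerical content of the hypotheses first, isolate one homological reduction I will reuse, and then treat the two parts — with part (1) resting on a structural input and part (2) on Euler-form bookkeeping. Throughout, ``unstable'' must mean ``not $\Theta_\delta$-stable'' (strictly semistable): part (1) shows $M$ is always semistable, and genuine strictly-semistable $M$ do occur (e.g. the special tubes of the $4$-subspace quiver), so ``unstable'' cannot mean ``not semistable'' here.

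\emph{Preliminaries and a brick reduction.} Since $\delta$ is isotropic, $\langle\delta,\delta\rangle=0$; with $\scl(\delta)=1$ this gives, for every indecomposable $M$ of dimension $\delta$, that $\dim\End(M)=1$ and hence $\dim\Ext(M,M)=\dim\End(M)-\langle\delta,\delta\rangle=1$ by \eqref{eq:Euler}. Also $\Theta_\delta(\delta)=0$. For a subrepresentation $U\subset M$ with $\dim U=\beta$, $\dim M/U=\gamma=\delta-\beta$, expanding $\Theta_\delta(U)=\langle\beta,\delta\rangle-\langle\delta,\beta\rangle$ via $\delta=\beta+\gamma$ cancels the $\langle\beta,\beta\rangle$ terms and yields $\Theta_\delta(U)=\langle\beta,\gamma\rangle-\langle\gamma,\beta\rangle$. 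I will repeatedly use: if $\ses{U}{M}{V}$ is nonsplit with $M$ a brick and $\Hom(U,V)=0$, then $\Hom(V,U)=0$. Indeed Lemma~\ref{lem:sesmorphism} (both sequences equal, the induced map into $\Hom(U,V)=0$ vanishing) gives a ring homomorphism $\Psi\colon\End(M)\to\End(U)\times\End(V)$, $\phi\mapsto(\phi|_U,\bar\phi)$; a short chase identifies $\ker\Psi$ with $\Hom(V,U)$ via $h\mapsto\iota\, h\,\pi$, and since $\End(M)=\kk$ and $\Psi(\id_M)\neq0$ we get $\Hom(V,U)=0$. In that case $\langle\beta,\gamma\rangle=-\dim\Ext(U,V)$, $\langle\gamma,\beta\rangle=-\dim\Ext(V,U)$, so $\Theta_\delta(U)=\dim\Ext(V,U)-\dim\Ext(U,V)$.

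\emph{Part (1).} I must show every indecomposable $M$ of dimension $\delta$ is $\Theta_\delta$-semistable, i.e. $\hn^{\Theta_\delta}(\delta)=1$. As $\delta$ is a Schur root there is a $\Theta_\delta$-stable representation, so $\mo\neq\emptyset$, and isotropy gives $\dim\mo=1-\langle\delta,\delta\rangle=1$. To exclude a destabilizing subobject I take $U=\scss(M)$ and $V=M/U$: Lemma~\ref{scss} gives $\Hom(U,V)=0$, hence $\Hom(V,U)=0$ by the brick reduction, so a strict destabilization would force $\dim\Ext(U,V)>\dim\Ext(V,U)\geq1$. The main obstacle is that this \emph{cannot} be ruled out numerically — one checks that such a hypothetical $M$ still has $\dim\Ext(M,M)=1$ — so part~(1) rests on the structural fact that for an isotropic Schur root the slope-$0$ subcategory $\mathcal C$ of $\Theta_\delta$-semistable representations is a tame, uniserial (``tube'') category whose indecomposable objects of dimension $\delta$ are its regular indecomposables and hence all semistable; I would obtain this by reducing, via reflection functors and passage to the perpendicular category of the finitely many exceptional stable objects, to a tame hereditary algebra in which $\delta$ is the null root. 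This reduction is the crux of part~(1).

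\emph{Part (2).} ($\Rightarrow$) Working in the finite-length abelian category $\mathcal C$, the non-simple brick $M$ lies in a single tube of rank $r\geq2$ (rank $1$ would make $M$ stable), and I take $U$ to be its regular socle (a quasi-simple) and $V=M/U$; both are bricks, $\Hom(U,V)=0$ since the regular socle of $V$ differs from $U$, and the sequence is nonsplit. The brick reduction gives $\Hom(V,U)=0$, and slope $0$ gives $\Theta_\delta(U)=0$, so $\dim\Ext(U,V)=\dim\Ext(V,U)=:m$. Substituting $\langle\beta,\beta\rangle=1-\dim\Ext(U,U)$ and $\langle\gamma,\gamma\rangle=1-\dim\Ext(V,V)$ into $\langle\delta,\delta\rangle=\langle\beta,\beta\rangle+\langle\gamma,\gamma\rangle+\langle\beta,\gamma\rangle+\langle\gamma,\beta\rangle=0$ yields $2-\dim\Ext(U,U)-\dim\Ext(V,V)=2m$; since $m\geq1$ this forces $m=1$ and $\dim\Ext(U,U)=\dim\Ext(V,V)=0$, so $U,V$ are exceptional with $\Ext(U,V)=\Ext(V,U)=\kk$. ($\Leftarrow$) Given such a sequence, $\Hom(U,V)=0$ makes the induced map $\End(M)\to\Hom(U,V)$ zero, so Lemma~\ref{lem:indecomposable1} shows $M$ is indecomposable of dimension $\delta$; the same numerics give $\langle\beta,\gamma\rangle=\langle\gamma,\beta\rangle=-1$, whence $\Theta_\delta(U)=0$, so $U$ is a proper nonzero subobject violating stability while part~(1) gives semistability. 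Thus $M$ is unstable. The one genuinely hard input, as flagged, is the tame/tube structure of $\mathcal C$ underlying both the semistability of part~(1) and the existence of the orthogonal sub-quotient $U\subset M$ in part~(2); granted that, the Euler-form computations above are routine.
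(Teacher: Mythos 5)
Your part (1) contains the decisive gap. After correctly reducing to showing $\dim\Ext(U,V)\leq\dim\Ext(V,U)$ for $U=\scss (M)$, $V=M/U$, you declare that this ``cannot be ruled out numerically'' and replace the argument by an appeal to an unproven structural package: that the slope-zero $\Theta_\delta$-semistable subcategory is a tube category, to be reached ``via reflection functors and passage to the perpendicular category''. That package is never constructed, and it is itself a theorem of at least the same depth as the lemma; worse, the way you invoke it is circular, since a priori an indecomposable of dimension $\delta$ need not lie in the semistable subcategory at all --- that containment is precisely what part (1) asserts. The forward implication of your part (2) inherits the same gap, because the subobject $U$ there is produced as the ``regular socle'' inside the same unproven tube. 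So both the crux of (1) and the existence statement in (2) rest on material you have not supplied.

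Moreover, the obstacle you flag is illusory, and the paper's proof removes it with ingredients you already wrote down. Since the path algebra is hereditary, applying $\Hom(U,-)$ and $\Hom(-,M)$ to $\ses{U}{M}{V}$ gives surjections $\Ext(U,M)\onto\Ext(U,V)$ and $\Ext(M,M)\onto\Ext(U,M)$, hence
\begin{equation*}
\dim\Ext(U,V)\leq\dim\Ext(U,M)\leq\dim\Ext(M,M)=1\leq\dim\Ext(V,U),
\end{equation*}
where the equality $\dim\Ext(M,M)=1$ is your own observation ($\End(M)=\kk$ and $\sk{\delta}{\delta}=0$) and the last inequality holds because the sequence is nonsplit. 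This yields $\Theta_\delta(U)\geq 0$ at once; it is exactly the paper's argument, written there as $\Theta_\delta(U)=\dim\Hom(U,M)-\dim\Ext(U,M)-\dim\Hom(M,U)+\dim\Ext(M,U)\geq 1-1-0+0=0$, using $\Hom(M,U)=0$ (from $\End(M)=\kk$) and $\dim\Hom(U,M)\geq\dim\Hom(U,U)\geq 1$. For part (2) the paper then simply analyzes the equality case of these same bounds --- $\Theta_\delta(U)=0$ forces $\dim\Hom(U,M)=\dim\Ext(U,M)=1$ and $\Ext(M,U)=0$ --- and feeds this into the diagram \eqref{bigD} of Lemma \ref{lem:specialroots} to conclude that $U$ and $V$ are exceptional with $\Hom(U,V)=\Hom(V,U)=0$ and $\Ext(U,V)=\Ext(V,U)=\kk$; no tube theory enters anywhere. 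Your backward direction in (2) (Lemma \ref{lem:indecomposable1}, then $\Theta_\delta(U)=0$, then part (1)) does agree with the paper, and your Euler-form computation forcing $m=1$ is correct as far as it goes, but it only becomes a proof once the orthogonal subobject $U$ is produced by the elementary route above rather than by the asserted tube structure.
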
 
\begin{proof} Let $M$ be an indecomposable representation of dimension $\delta$ and $U=\scss{M}$. Write $V=M/U$. If $M$ were not semistable, using \eqref{eq:Euler}, we had $U\neq M$ with \begin{equation}0>\Theta_\alpha(U)=\dim\Hom(U,M)-\dim\Ext(U,M)-\dim\Hom(M,U)+\dim\Ext(M,U).\end{equation}

As $\End(M)=\kk$ by assumption, we have $\Hom(M,U)=0$. Moreover, we have $\dim\Ext(U,M)\leq \dim \Ext(M,M)=1$. As $\Hom(U,V)=0$ by Lemma \ref{scss}, it follows that $1\leq\dim\Hom(U,U)=\dim\Hom(U,M)$. Thus the inequality cannot hold and it follows that $U=M$.

Moreover, equality can only hold if $\dim\Hom(U,M)=\dim\Ext(U,M)=1$ and $\dim\Ext(M,U)=0$. Then we may consider a diagram as the one obtained in \eqref{bigD}. It follows that $\Ext(U,U)=0$, $\Ext(U,V)=\kk$ and thus $\Ext(M,V)=\kk$ which yields $\Ext(V,V)=0$. We also get $\Ext(V,U)=\kk$. Thus $U,V$ is a pair of exceptional representations satisfying the claimed conditions. 

The other way around this setup of $U$ and $V$ yields an indecomposable representation $M$ of dimension $\delta$ as the middle term of any nonsplit sequence $\ses{U}{M}{V}$, see Lemma \ref{lem:sesmorphism}, which is unique as $\Ext(V,U)=\kk$. As
$\Theta_{\alpha}(U)=0$, the representation is not stable. But it is semistable by the first part of the lemma.
\end{proof}

For an exceptional root $\beta$, we write $M_\beta$ for the unique indecomposable representation (up to isomorphism) of dimension $\beta$. For two dimension vectors $\alpha$ and $\beta$, recall that the maps $\dim\Hom(-,-):R_\alpha(Q)\times R_\beta(Q)\to\Z^{\geq 0}$ and $\dim\Ext(-,-):R_\alpha(Q)\times R_\beta(Q)\to\Z^{\geq 0}$ are upper-semicontinuous, see e.g. \cite{sc92}. Its minimal - and thus general - value is denoted by $\hom(\alpha,\beta)$ and $\ext(\alpha,\beta)$ respectively.
\begin{proposition}\label{pro:isotropic}

Let $\delta$ be an isotropic Schur with $\scl(\delta)=1$. Then there exist two exceptional roots $\beta_1$ and $\beta_2$ with $\dim\Ext(M_{\beta_2},M_{\beta_1})=2$, $M_{\beta_2}\in M_{\beta_1}^{\perp}$ and $\Hom(M_{\beta_2},M_{\beta_1})=0$. Moreover, every stable representation $M$ of dimension $\delta$ can be written as the middle term of a short exact sequence of the form
\begin{equation*}\ses{M_{\beta_1}}{M}{M_{\beta_2}}.\end{equation*} 
\end{proposition}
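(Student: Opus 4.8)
The plan is to produce the exceptional pair $(\beta_1,\beta_2)$ with $\beta_1+\beta_2=\delta$ first, and then to show that every stable $M$ of dimension $\delta$ sits in a short exact sequence $\ses{M_{\beta_1}}{M}{M_{\beta_2}}$. Before constructing anything I would record the Euler-form profile that such a pair must satisfy: both summands being exceptional forces $\sk{\beta_1}{\beta_1}=\sk{\beta_2}{\beta_2}=1$; the perpendicularity $M_{\beta_2}\in M_{\beta_1}^\perp$ together with $\Hom(M_{\beta_1},M_{\beta_2})=0$ forces $\sk{\beta_1}{\beta_2}=0$; and $\Hom(M_{\beta_2},M_{\beta_1})=0$ with $\dim\Ext(M_{\beta_2},M_{\beta_1})=2$ forces $\sk{\beta_2}{\beta_1}=-2$. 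Expanding $\sk{\delta}{\delta}=\sk{\beta_1+\beta_2}{\beta_1+\beta_2}$ then yields $1+0-2+1=0$, consistent with $\delta$ being isotropic. This both sanity-checks the target numerics and shows conversely that, once I exhibit an exceptional decomposition $\delta=\beta_1+\beta_2$ with $\sk{\beta_1}{\beta_2}=0$, $\sk{\beta_2}{\beta_1}=-2$, and the relevant $\Hom$ groups vanishing, the asserted $\Ext$ dimensions follow immediately from \eqref{eq:Euler}.

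The construction of the pair is the conceptual heart, and I would obtain it from the reduction theory of Schur roots via perpendicular categories and exceptional sequences (Schofield, Crawley-Boevey). The point is that an isotropic Schur root admits a \emph{Kronecker reduction}: there is a fully faithful, exact, $\Ext$-preserving embedding $\Phi\colon\rep(K(2))\to\rep(Q)$ (with $K(2)$ having arrows $0\to 1$) sending the simples to exceptional representations $M_{\beta_1}:=\Phi(S_1)$ and $M_{\beta_2}:=\Phi(S_0)$ and the minimal imaginary root $(1,1)$ to $\delta$, so $\beta_1+\beta_2=\delta$. Since $\Phi$ preserves $\Hom$ and $\Ext$, the identities $\Hom_{K(2)}(S_1,S_0)=\Ext_{K(2)}(S_1,S_0)=\Hom_{K(2)}(S_0,S_1)=0$ and $\dim\Ext_{K(2)}(S_0,S_1)=2$ transport to exactly the claimed vanishings and to $\dim\Ext(M_{\beta_2},M_{\beta_1})=2$, while exceptionality of $M_{\beta_1},M_{\beta_2}$ is exceptionality of $S_1,S_0$. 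I would justify that the reduction exists under $\scl(\delta)=1$ by identifying the category of $\Theta_\delta$-semistable representations of slope $0$ as a tame subcategory whose homogeneous part is a $\Pn^1$-family of bricks of dimension $\delta$, governed precisely by this rank-$2$ datum $K(2)$.

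For the final statement I would fix a stable $M$ of dimension $\delta$ and build the sequence directly. From $\sk{\beta_1}{\delta}=\sk{\beta_1}{\beta_1}+\sk{\beta_1}{\beta_2}=1$ and $\sk{\delta}{\beta_2}=\sk{\beta_1}{\beta_2}+\sk{\beta_2}{\beta_2}=1$, equation \eqref{eq:Euler} gives $\dim\Hom(M_{\beta_1},M)\geq 1$ and $\dim\Hom(M,M_{\beta_2})\geq 1$, so there are nonzero $f\colon M_{\beta_1}\to M$ and $g\colon M\to M_{\beta_2}$. Because $\beta_1,\beta_2$ are proper, $\dim_\kk M_{\beta_1}<\dim_\kk M$ and $\dim_\kk M_{\beta_2}<\dim_\kk M$, so the image $f(M_{\beta_1})\subseteq M$ is a proper nonzero subrepresentation and $g(M)$ a proper nonzero quotient; stability of $M$ (slope $0$) then forces $\Theta_\delta(f(M_{\beta_1}))>0$ and $\Theta_\delta(g(M))<0$. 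Using that $M$, as a slope-$0$ brick of dimension $\delta$, lies in the subcategory generated by $M_{\beta_1}$ and $M_{\beta_2}$ (the image of a regular brick under $\Phi$), I would conclude that $f$ is mono, $g$ is epi, and $\ker g=f(M_{\beta_1})$, giving $\ses{M_{\beta_1}}{M}{M_{\beta_2}}$. As a consistency check in the reverse direction, Proposition \ref{pro:KnPn} applied to $K(2)$ (the case $n=2$, $d=1$) shows that $\Ext(M_{\beta_2},M_{\beta_1})\cong\kk^2$ yields, after projectivizing, a $\Pn^1$ of pairwise non-isomorphic indecomposables of dimension $\delta$ matching the stable locus.

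I expect the main obstacle to be the structural existence and completeness of the Kronecker reduction: establishing $\Phi$ (equivalently, exhibiting the exceptional decomposition $\delta=\beta_1+\beta_2$ with $\dim\Ext(M_{\beta_2},M_{\beta_1})=2$) and proving that its homogeneous image is \emph{exactly} the stable locus, so that no stable $M$ escapes the subcategory. The Euler-form bookkeeping and the passage from the maps $f,g$ to the short exact sequence are comparatively routine once the reduction and the hypothesis $\scl(\delta)=1$ (which, via Lemma \ref{lem:isotropic}, makes every indecomposable of dimension $\delta$ Schurian and semistable) are in hand.
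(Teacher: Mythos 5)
Your numerics and the overall skeleton are right, but the proposal defers exactly the two load-bearing steps, so there is a genuine gap. For the existence of the pair $(\beta_1,\beta_2)$ the paper simply cites \cite[Proposition 4.1]{PW16}; your substitute --- identifying the category of $\Theta_\delta$-semistable representations of slope $0$ as a tame subcategory ``governed by $K(2)$'' --- is circular, since knowing that this category is controlled by a rank-two Kronecker datum is essentially equivalent to already having the exceptional decomposition you are trying to construct. More seriously, the second assertion of the proposition (every stable $M$ of dimension $\delta$ is an extension of $M_{\beta_2}$ by $M_{\beta_1}$) is precisely your ``completeness'' statement that no stable $M$ escapes the image of $\Phi$, and you explicitly leave it as ``the main obstacle.'' Your stability bookkeeping does not close it: nonzero maps $f\colon M_{\beta_1}\to M$ and $g\colon M\to M_{\beta_2}$ exist by the Euler form, but nothing you prove forces $f$ to be injective, $g$ to be surjective, or $\im(f)=\ker g$; all of that is pulled from the unproven subcategory membership.

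The paper closes this gap with a direct argument of a different shape, which you should compare with. Since $\ext(\beta_1,\beta_2)=0$, Schofield's theorem \cite[Theorem 3.3]{sc92} gives that \emph{every} representation of dimension $\delta$ has a subrepresentation $W$ of dimension vector $\beta_1$; the whole difficulty is to show that for stable $M$ one must have $W\cong M_{\beta_1}$. The paper assumes $W\ncong M_{\beta_1}$ (hence $W$ decomposable), uses stability of $M$ together with $\Theta_\delta(\beta_1)=\sk{\beta_1}{\delta}-\sk{\delta}{\beta_1}=2$ to write $W=W_1\oplus W_2$ with prescribed Euler pairings against $M$, deduces that $W_2$ is exceptional with $\Hom(M,W_2)=\Ext(M,W_2)=0$ and $\Ext(W_1,W_2)=0$, and then plays the resulting inequality $\sk{\beta_1}{\gamma_2}\geq 1$ against the equality $\sk{\beta_1}{\gamma_2}=0$, which is extracted from the perpendicularity $\delta\in{}^{\perp}\gamma_2$ via \cite[Theorem 4.1]{sc92}, the Happel--Ringel lemma \cite[Lemma 4.1]{HR82}, and a factorization argument for the unique (up to scalar) map $M_{\beta_1}\to M_\delta$ into a general representation; this contradiction shows every subrepresentation of dimension $\beta_1$ is isomorphic to $M_{\beta_1}$, and duality handles the quotient, yielding the exact sequence. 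None of this machinery appears in your proposal, and without it (or an actual proof of your completeness claim) the second half of the proposition remains unestablished.
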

\begin{proof}
The first part is \cite[Proposition 4.1]{PW16}. 

Thus assume that $M$ is stable of dimension $\delta$. As $\ext(\beta_1,\beta_2)=0$, by \cite[Theorem 3.3]{sc92}, every representation of dimension $\delta$ has a subrepresentation of dimension $\beta_1$. So we may assume that $W\subset M$ is a subrepresentation with $\udim W=\beta_1$. Furthermore, assume that $W\ncong M_{\beta_1}$ which already means that $W$ is decomposable. Since $M$ is stable, for every direct summand $W_i$ of $W$, we have $\Theta_\delta(W_i)>0$. Since we have \begin{equation}\Theta_\delta(W)=\Theta_\delta(\beta_1)=\sk{\beta_1}{\delta}-\sk{\delta}{\beta_1}=1-(-1)=2,\end{equation} we conclude that $W\cong W_1\oplus W_2$ for indecomposable representations $W_i$ such that $-\sk{M}{W_1}=\sk{W_2}{M}=1$. 

As $\End(M)=\kk$, we have $\Hom(M,W_i)=0$ for $i=1,2$. Thus we can deduce that $\Ext(M,W_1)=\kk$ and $\Ext(M,W_2)=0$. This already shows that $W_2$ is exceptional by applying $\Hom(-,W_2)$ to $W_2\subset M$. Moreover, it shows that $\Ext(W_1,W_2)=0$ by applying $\Hom(-,W_2)$ to $W_1\subset M$. Writing $\udim W_i=\gamma_i$, this yields
\begin{align}\label{ineq1}\sk{\beta_1}{\gamma_2}=\sk{\udim W}{\udim W_2}=\sk{\udim W_1+\udim W_2}{\udim W_2}\geq 1.\end{align}

On the other hand, as $\Hom(M,W_2)=\Ext(M,W_2)=0$, we have $\delta\in{^\perp}\gamma_2$.  By \cite[Theorem 4.1]{sc92}, it follows that $\ext(\gamma_2,\delta)=0$ or $\hom(\gamma_2,\delta)=0$. Now the considerations from above show that $\ext(\gamma_2,\delta)=0$ and thus $\hom(\gamma_2,\delta)=1$. As $\ext(\delta,\gamma_2)=0$ and $\gamma_2<\delta$, the Happel-Ringel lemma \cite[Lemma 4.1]{HR82} implies that a general representation of dimension $\delta$ has a subrepresentation of dimension $\gamma_2$. 

Let $M_\delta$ be a general representation of dimension $\delta$. As $\hom(\beta_1,\delta)=1$ and $\gamma_2<\beta_1$, it follows that $\hom(\beta_1,\gamma_2)=0$. Indeed, otherwise there would be a non-injective morphism $M_{\beta_1}\to M_\delta$ factoring through $W_2$ which contradicts the fact that the unique (up to scalars) nonzero morphism $M_{\beta_1}\to M_\delta$ is injective.

Since a general representation of dimension $\delta$ has a subrepresentation of dimension $\beta_1$, using $\ext(\delta,\gamma_2)=0$, we get $\ext(\beta_1,\gamma_2)=0$. In summary, this yields $\sk{\beta_1}{\gamma_2}=0$ which yields a contradiction to inequality (\ref{ineq1}). In particular, such a subrepresentation $W$ cannot exist and every subrepresentation of dimension $\beta_1$ is isomorphic to $M_{\beta_1}$.

By duality, every quotient of dimension $\beta_2$ of a stable representation of dimension $\delta$ is forced to be isomorphic to $M_{\beta_2}$. As every stable representation has a subrepresentation of dimension $\beta_1$ and thus a quotient of dimension $\beta_2$, the claim follows.
\end{proof}

\begin{remark} The considerations of \cite{PW16} give a way to determine the desired decomposition of an isotropic root into a pair of exceptional roots.

Actually, the first part also seems to follow inductively by the algorithm of Derksen and Weyman \cite{DW02}, see also \cite[Proposition 3.15]{Wei12} for possible decompositions of isotropic roots.

\end{remark}
\begin{theorem}\label{thm:isotropic}
Let $\delta$ be an isotropic Schur root with $\scl(\delta)=1$. Then $\delta$ admits a cellular tree normal form.
\end{theorem}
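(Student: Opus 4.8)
The plan is to partition $\mathrm{Ind}(Q,\delta)$ into its stable and its properly semistable members using Lemma \ref{lem:isotropic}, construct a cellular tree normal form for each part, and then verify that the two parts fit together into a single mosaic of tree modules.

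For the stable locus I would invoke Proposition \ref{pro:isotropic} to fix the exceptional pair $(M_{\beta_1},M_{\beta_2})$ with $\Hom(M_{\beta_2},M_{\beta_1})=0$, $\dim\Ext(M_{\beta_2},M_{\beta_1})=2$, and $M_{\beta_2}\in M_{\beta_1}^{\perp}$ (the last condition also giving $\Hom(M_{\beta_1},M_{\beta_2})=\Ext(M_{\beta_1},M_{\beta_2})=0$). Both $M_{\beta_1}$ and $M_{\beta_2}$ are exceptional, hence tree modules by \cite{Ringel:1998gf}. After choosing tree bases I would exhibit a tree-shaped basis $\{e_1,e_2\}$ of the two-dimensional space $\Ext(M_{\beta_2},M_{\beta_1})$ and apply Proposition \ref{pro:KnPn} together with Remark \ref{rmk:grasscells} with $n=2$, $d=1$. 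This produces a mosaic indexed by $\Gr_1(\kk^2)=\Pn^1$ with two cells $C_{(1)},C_{(2)}$ of dimensions $0$ and $1$, whose centers $B_{(1)},B_{(2)}$ are tree modules, since each is obtained by adjoining a single labeled arrow to a disjoint union of two trees (Lemma \ref{lem:treeextquiver}). By Proposition \ref{pro:isotropic} every stable representation of dimension $\delta$ occurs among these middle terms, so these two cells give a tree normal form for the stable part.

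The crucial, and most delicate, point is that the $\Pn^1$-family produced by Proposition \ref{pro:KnPn} consists of \emph{exactly} the stable representations, so that it does not overlap the properly semistable cells constructed below. I would establish this by showing that an indecomposable middle term $M$ of $\ses{M_{\beta_1}}{M}{M_{\beta_2}}$ cannot be properly semistable: such an $M$ is semistable of slope $0$ by Lemma \ref{lem:isotropic}(1), and if it had a proper slope-$0$ subobject $U$ with semistable quotient $V':=M/U$, one would analyze the composite $M_{\beta_1}\hookrightarrow M\twoheadrightarrow V'$ using the Hom-vanishing between semistables of distinct slopes recalled in the proof of Lemma \ref{scss}, together with $\Theta_\delta(\beta_1)=2$ while $U,V'$ have slope $0$, to force a contradiction. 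Getting the slope bookkeeping for $M_{\beta_1}$ exactly right (it has positive $\Theta_\delta$ but is a priori only a brick, not obviously $\Theta_\delta$-semistable) is the technical heart of the argument and the step I expect to require the most care.

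For the properly semistable indecomposables I would use Lemma \ref{lem:isotropic}(2): each such $M$ sits in a unique nonsplit sequence $\ses{U}{M}{V}$ with $U,V$ exceptional, $\Hom(U,V)=\Hom(V,U)=0$, and $\Ext(U,V)=\Ext(V,U)=\kk$, where $U=\scss{M}$ is recovered from $M$; hence $M\mapsto(U,V)$ is injective, and since only finitely many roots lie below $\delta$ there are finitely many such pairs. Each $U,V$ is a tree module, and as $\dim\Ext(V,U)=1$ a tree-shaped basis is a single standard basis vector, so Lemma \ref{lem:treeextquiver} shows the unique extension $M$ is again a tree module; taking $U_M=\{0\}$ makes $(M,\{0\})$ a zero-dimensional cell of indecomposables that is trivially strong and separating. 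Finally I would assemble the mosaic: the two stable cells form a mosaic by Remark \ref{rmk:grasscells}, the finitely many properly semistable cells are pairwise non-isomorphic $0$-cells, and the two families are disjoint because stability is an isomorphism invariant; by Lemma \ref{lem:isotropic} and Proposition \ref{pro:isotropic} their union exhausts $\mathrm{Ind}(Q,\delta)$, giving a cellular tree normal form for $\delta$. Besides the exactness claim above, the secondary technical point I would need to handle is the existence of the tree-shaped bases of $\Ext(M_{\beta_2},M_{\beta_1})$ and $\Ext(V,U)$ ensuring that all cell centers are genuinely tree modules.
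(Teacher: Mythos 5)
Your construction on the stable locus and your treatment of the properly semistable part follow the same route as the paper (Proposition \ref{pro:isotropic} plus Proposition \ref{pro:KnPn}/Remark \ref{rmk:grasscells}, packaged in the paper as Theorems \ref{thm:cells} and \ref{thm:treecells}, and Lemma \ref{lem:isotropic}(2) with Lemma \ref{lem:treeextquiver}). The genuine gap is precisely the step you flag as the technical heart: it is \emph{false} that every indecomposable middle term of $\ses{M_{\beta_1}}{M}{M_{\beta_2}}$ is stable, so no slope bookkeeping can prove it. The paper's own Example \ref{ex:subspace n4} is a counterexample: for $S(4)$ with $\delta=(2,1,1,1,1)$, $\beta_1=(1,1,0,0,0)$, $\beta_2=(1,0,1,1,1)$, identify $R(M_{\beta_2},M_{\beta_1})\cong\kk^3$ via the arrows $a_2,a_3,a_4$; the image of $d_{M_{\beta_2},M_{\beta_1}}$ is the diagonal, so the middle term $M$ with $M_{a_1}=\twobyone{1}{0}$, $M_{a_2}=M_{a_3}=\twobyone{0}{1}$, $M_{a_4}=\twobyone{1}{1}$ has nonzero extension class and is indecomposable; yet the subrepresentation spanned by the second coordinate of $M_{q_0}$ together with $M_{q_2}$, $M_{q_3}$ has dimension vector $(1,0,1,1,0)$, on which $\Theta_\delta$ vanishes, so $M$ is properly semistable. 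This is exactly why that example states that three of the six unstable indecomposables ``are covered by the first mosaic.'' Your hoped-for contradiction cannot materialize because $M_{\beta_1}$ is not semistable of the same slope as $\delta$ (indeed $\Theta_\delta(\beta_1)=2>0$, which makes its slope strictly smaller), so the Hom-vanishing between semistables of distinct slopes never applies to the composite $M_{\beta_1}\into M\onto M/\scss(M)$; in the counterexample that composite is simply injective.

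This falsity breaks your final assembly: if you create a zero-dimensional cell for \emph{every} properly semistable indecomposable, those already occurring in the $\Pn^1$-family lie in two distinct cells, violating the mosaic condition that $M_i(\lambda_i)\cong M_j(\lambda_j)$ forces $i=j$. The repair, which is what the paper does, is to drop the stability dichotomy entirely: the $\Pn^1$-family covers all stable representations by Proposition \ref{pro:isotropic} and possibly finitely many unstable ones; by Lemma \ref{lem:isotropic}(2) the unstable indecomposables are in bijection with the finitely many admissible exceptional pairs $(U,V)$, each is the unique nonsplit extension of $V$ by $U$ and is a tree module by the argument you already give; one then adjoins zero-dimensional cells only for those unstable indecomposables \emph{not} already isomorphic to a member of the family. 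Disjointness then holds by construction rather than by a stability criterion, the union is still all of $\mathrm{Ind}(Q,\delta)$, and the remaining ingredients of your proposal (existence of tree-shaped bases, which is automatic because the images of the standard basis vectors of $R(N,M)$ span $\Ext(N,M)$, and Lemma \ref{lem:treeextquiver}) go through unchanged.
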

\begin{proof}
Proposition \ref{pro:isotropic} together with Theorem \ref{thm:cells} shows that there exists a $\mathbb P^1$-family of nonisomorphic indecomposables which can be written as middle terms of short exact sequences
\begin{equation}\ses{M_{\beta_1}}{M}{M_{\beta_2}}\end{equation}
which gives a mosaic $\{(M_i,U_i)\}_{i=1,2}$ of indecomposables of dimension $\delta$. As $M_{\beta_1}$ and $M_{\beta_2}$ are exceptional, they are tree modules by \cite{Ringel:1998gf}. Thus we can actually apply Theorem \ref{thm:treecells} when choosing a tree-shaped basis of $\Ext(M_{\beta_2},M_{\beta_1})$ which is compatible with the coefficient quivers. It follows that we can choose $M_i$ as tree modules implying that every representation $M_i(\lambda)$ with $\lambda\in U_i$ has a $(M_i,U_i)$-normal form.

As all stable representations are covered by this, it remains to consider the unstable (but semistable) indecomposable representations. They are covered by Lemma \ref{lem:isotropic} and the same argument shows that they are actually tree modules.
\end{proof}

\begin{remark}
For isotropic Schur roots $\delta$ with $\scl(\delta)\geq 2$, the proofs show that there exists a mosaic of indecomposable representations which gives a cellular tree normal form for Schurian representations of dimension $\delta$. As these representations form a dense subset of all indecomposables, it remains to investigate the finitely many non-Schurian indecomposables in this case. 

\end{remark}

\begin{remark} If $\delta$ is an isotropic Schur root with $\scl(\delta)=1$, each Schurian representation $M_1$ of dimension $\delta$ gives rise to an indecomposable representation $M_n$ of dimension $n\delta$ for $n\geq 1$. They can be successively found as the middle terms of the unique nonsplit short exact sequences
\begin{equation}\ses{M_1}{M_n}{M_{n-1}}.\end{equation}
So we can use the cellular tree normal form for $\delta$ to construct mosaics of indecomposables of dimension $n\delta$. Note that we inductively get
$\Hom(M_n,M_1)\cong\Hom(M_{n-1},M_1)=\kk$, $\Hom(M_1,M_1)\cong\Ext(M_{n-1},M_1)=\kk$ and $\Ext(M_n,M_1)\cong\Ext(M_1,M_1)=\kk$.

In the case of extended Dynkin quivers, all indecomposables of dimension $n\delta$ can be constructed in this way. For general multiples of isotropic Schur roots, this does not seem to follow from the general theory.

\end{remark}
\begin{example}\label{ex:subspace n4}
The case of the dimension vector $(1,1)$ of $K(2)$ was treated in Example \ref{ex:kronecker}. 

Let us consider the imaginary Schur root $\delta=(2,1,1,1,1)$ of $S(4)$ as defined in Section \ref{sec:subspace}. Then $\delta$ decomposes into $\delta=\beta_1+\beta_2=(1,1,0,0,0)+(1,0,1,1,1)$. The representations $M_{\beta_1}$ and $M_{\beta_2}$ are given by the coefficient quivers
\begin{center}
\begin{tikzpicture}

\draw (0,0) node (J1) {$0$} +(0,-2) node (I1) {$1$}+(5,0) node (J2) {$0'$} +(3,-2) node (I2) {$2'$} +(5,-2) node (I3) {$3'$}+(7,-2) node (I4) {$4'$}; 
\draw[->] (I1)--(J1) node[left,pos=.5] {$a_1$};
\draw[->] (I2)--(J2) node[left,pos=.5] {$a_2$};
\draw[->] (I3)--(J2) node[left,pos=.5] {$a_3$};
\draw[->] (I4)--(J2) node[right,pos=.5] {$a_4$};
\end{tikzpicture}
\end{center}
A tree-shaped basis of $\Ext(M_{\beta_2},M_{\beta_1})$ is represented by
$\{2'\xrightarrow{a_2}0,3'\xrightarrow{a_3}0\}.$ According to Theorem \ref{thm:treecells}, we obtain a mosaic of indecomposables consisting of a one- and a zero-dimensional cell
\begin{equation}\left\{\left((\kk^2,\kk,\kk,\kk,\kk),(\begin{pmatrix}1\\0\end{pmatrix},\begin{pmatrix}1\\1\end{pmatrix},\begin{pmatrix}\ast\\1\end{pmatrix},\begin{pmatrix}0\\1\end{pmatrix})\right),\left((\kk^2,\kk,\kk,\kk,\kk),(\begin{pmatrix}1\\0\end{pmatrix},\begin{pmatrix}0\\1\end{pmatrix},\begin{pmatrix}1\\1\end{pmatrix},\begin{pmatrix}0\\1\end{pmatrix})\right)\right\}.\end{equation} 

The remaining indecomposables can be found as described in the second part of Lemma \ref{lem:isotropic} when considering the following decompositions of $\delta$ into exceptional roots:
\begin{equation}\delta=(1,1,1,0,0)+(1,0,0,1,1)=(1,1,0,1,0)+(1,0,1,0,1)=(1,1,0,0,1)+(1,0,1,1,0).\end{equation}
Note that, we only get three new indecomposables as the other three indecomposables are covered by the first mosaic. More precisely, the remaining indecomposables are given by the three tree modules defined by the coefficient quivers
\begin{center}
\begin{tikzpicture}

\draw (0,0) node (J1) {$0$} +(-2,-2) node (I1) {$1$}+(3.5,0) node (J2) {$0'$} +(0,-2) node (I2) {$i$} +(2,-2) node (I3) {$j$}+(5,-2) node (I4) {$k$}; 
\draw[->] (I1)--(J1) node[left,pos=.5] {$a_1$};
\draw[->] (I2)--(J1) node[left,pos=.5] {$a_i$};
\draw[->] (I3)--(J1) node[left,pos=.5] {$a_j$};
\draw[->] (I3)--(J2) node[left,pos=.5] {$a_j$};
\draw[->] (I4)--(J2) node[right,pos=.5] {$a_k$};

\end{tikzpicture}
\end{center}
with $\{i,j,k\}=\{2,3,4\}$.
                 
\end{example}

\begin{example}If we add a vertex $q_{5}$ and an arrow $a_5:q_5\to q_1$ to $S(4)$, the isotropic Schur root
\begin{equation}\delta=3q_0+2q_1+2q_2+q_3+q_4+q_5\end{equation}
satisfies $\scl(\delta)\geq 2$. In this case 
\begin{equation}\delta=(2q_0+q_1+q_2+q_3+q_5)+(q_0+q_1+q_2+q_4)\end{equation}
is a decomposition into exceptional roots as in Proposition \ref{pro:isotropic} giving a $\Pn^1$-family of non-isomorphic Schurian indecomposables. But there also exists a decomposition
\begin{equation}\delta=2\cdot \beta_1+\beta_2+\beta_3=2\cdot (q_0+q_1+q_2+0+0)+(q_0+q_3+q_4)+(q_5).\end{equation}
As we have $\hom(\beta_i,\beta_j)=0$ for $i\neq j$, \cite[Theorem 3.3]{Wei15} shows that every indecomposable of dimension $(1,2,1)$ of the quiver
\begin{center}
\begin{tikzpicture}[scale=1]

\draw (0,0) node (I) {$\beta_2$} +(2.5,0) node (J) {$\beta_1$}+(5,0) node (K) {$\beta_3$}; 
\draw[->] (I) edge [bend left=30] node[above] {$a$} (J);
\draw[->] (J) edge [bend left=30] node[above] {$b$} (I);

\draw[->] (K) edge node[above] {$c$} (J);
\end{tikzpicture}
\end{center}                                     
gives an indecomposable of dimension $\delta$ with the same endomorphism ring. As the tree module defined by the coefficient quiver
\begin{equation}\beta_1^1\xrightarrow{b}\beta_2\xrightarrow{a}\beta_1^2\xleftarrow{c}\beta_3\end{equation}
has a two-dimensional endomorphism ring, we indeed get $\scl(\delta)\geq 2$.

In this case, it is still doable, but more difficult, to classify all indecomposable representations. But, this example seems to be a good starting point to analyze isotropic Schur roots with $\scl(\alpha)\geq 2$ in general. Actually, the methods of the paper should also be applicable in these cases. 
\end{example}

\subsection{Extended subspace quiver: an example}\label{sec:extsub}
We consider the quiver 
\begin{equation}\label{eq:Tnquiver}
T(n)=(\{q_0,q_1,\ldots,q_{n+1}\},\{a_1,a_2:q_1\to q_0\}\cup\{b_i:q_{i+1}\to q_0\mid i=1,\ldots,n\})
\end{equation}
\[
\vcenter{\hbox{\begin{tikzpicture}[point/.style={shape=circle,fill=black,scale=.5pt,outer sep=3pt},>=latex]
  \node[outer sep=-2pt] (q0) at (0,1.5) {${q_0}$};
   \node[outer sep=-2pt] (q1) at (-2,0) {${q_1}$};
   \node[outer sep=-2pt] (q2) at (-1,0) {${q_2}$};
  \node[outer sep=-2pt] (q3) at (0,0) {${q_3}$};
   \node[outer sep=-2pt] (d) at (1,0) {${\cdots}$};
   \node[outer sep=-2pt] (qn) at (2,0) {${q_n}$};
  \node[outer sep=-2pt] (qn1) at (3,0) {${q_{n+1}}$};
  \path[->]
        (q1) edge[bend left=5]  (q0)
				(q1) edge[bend right=5]  (q0)
				
        (q2) edge (q0)
        (q3) edge (q0)
        (qn) edge (q0)
        (qn1) edge  (q0);
        	   \end{tikzpicture}}}
\]
and the root $\alpha(n)=nq_0+\sum_{i=1}^{n+1}q_i$. In this case, we can classify the indecomposables as described in Remark \ref{rem:hn}. We choose the stability defined by the linear form $\Theta=(0,1,\ldots,1)$ so that the stable and semistable points of dimension $\alpha(n)$ coincide. Let $M$ be a representation of dimension $\alpha(n)$. Each $I\subset\{1,\ldots, n+1\}$ naturally defines a subrepresentation $M_I$ of $M$. Define $d(M)_I=\dim (M_I)_0$. A representation $M$ is stable if and only if
\begin{equation}d(M)_I>\frac{n}{n+1}|I|\text{ for all }\emptyset\neq I\subsetneq\{1,\ldots,n+1\}\end{equation}
if and only if
\begin{equation}d(M)_I\in\{|I|,|I|+1\}\text{ for all }\emptyset\neq I\subsetneq\{1,\ldots,n+1\}.\end{equation}

We choose a torus action on $M_{\alpha(n)}^{\Theta-\mathrm{st}}(T(n))$ by defining $\gamma=(1,2,0,\ldots,0)$. Then it is straightforward to check that the torus fixed points are given by the exceptional representations of the indicated dimension which are supported at the following subquiver of the universal abelian covering quiver
 \begin{center}
\begin{tikzpicture}[scale=0.8]

\draw (0,0) node (I1) {$m$} +(6,0) node (I2) {$l$}+(-3,-3) node (J1) {$1$}+(0,-3) node (J2) {$\dots$}+(1.5,-3) node (J3) {$1$}  +(3,-3) node (J4) {$1$}+(4.5,-3) node (J5) {$1$}+(6,-3) node (J6) {$\dots$} +(9,-3) node (J7) {$1$}; 

\draw[->] (J1)--(I1) node[pos=.5,left] {$b_{i_1}$};
\draw[->] (J3)--(I1) node[pos=.5,left] {$b_{i_m}$};
\draw[->] (J4)--(I1) node[pos=.5,right] {$a_1$};
\draw[->] (J4)--(I2) node[pos=.5,left] {$a_2$};
\draw[->] (J5)--(I2) node[pos=.5,right] {$b_{i_{m+1}}$};
\draw[->] (J7)--(I2) node[pos=.5,right] {$b_{i_{n}}$};
\end{tikzpicture}
\end{center}
Here $\{i_1,\ldots,i_{n}\}=\{1,\ldots,n\}$ and $m+l=n$. As all torus fixed points are exceptional representations of $\widehat{T(n)}$, there exist precisely ${n\choose m}$ fixed points of this kind which we denote by $T(i_1,\ldots,i_m)$. In the next step, we can apply Theorem \ref{thm:geomcells} and find strong and separating subspaces $U_{T(i_1,\ldots,i_m)}\subset R(T(i_1,\ldots,i_m),T(i_1,\ldots,i_m))$ of dimension $m$ which are induced by the respective attracting cells. For $i_j=j$, they are given as follows and in general by the obvious modification:
\begin{equation}T(1,\ldots,m)+U_{T(1,\ldots,m)}=\left(\sum_{i=1}^me_i,\begin{pmatrix}\ast\\\vdots\\\ast\\1\\\vdots\\1\end{pmatrix},e_1\ldots,e_m,e_{m+1},\ldots,e_n\right).\end{equation}
Note that we use this to determine the Poincaré polynomial: as there are ${n\choose m}$ cells of dimension $m$, we obtain
\begin{equation}P_{\alpha(n)}^{\Theta-\mathrm{st}}(q)=\sum_{m=0}^n{n\choose m}q^{2m}=(1+q^2)^n.\end{equation}
To classify all indecomposables, it remains to investigate the unstable indecomposable representations. The next two lemmas are to describe these kind of representations.

\begin{lemma}\label{lem:finalexample1} Let $M$ be a representation of $T(n)$ with $\udim M=\alpha(n)$. If $M$ is indecomposable, but unstable, there exists $m\geq 1$ and $I\subset\{1,\ldots, n\}$ with $|I|=m+1$ and $1\notin I$ such that $\udim\scss M=mq_0+\sum_{i\in I}q_i$. In particular, $\scss M$ is exceptional. Finally, we have that $M/\mathrm{scss} M$ is indecomposable. 
\end{lemma}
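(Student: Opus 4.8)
The plan is to translate the whole statement into the combinatorics of the subspace configuration attached to $M$ and then identify $\scss{M}$ as the ``densest'' piece, using indecomposability to kill the degenerate cases. Write $W_0 := M_{q_0}$; for each single--arrow leaf $q_i$ ($2 \le i \le n+1$) let $w_i \in W_0$ be the image of a basis vector, and let $x,y \in W_0$ be the images of the two arrows out of $q_1$. Since $\udim M=\alpha(n)$, indecomposability forces each $w_i\ne 0$, $(x,y)\ne(0,0)$, and $\{w_2,\dots,w_{n+1},x,y\}$ to span $W_0$; as these are $n+2$ vectors in the $n$--dimensional space $W_0$, the configuration has nullity $2$. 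Recall from the given stability description that $\mu(M_J)=\tfrac{|J|}{|J|+d(M)_J}$ for a leaf set $J$. First I would show $\scss{M}=M_I$ is \emph{leaf--generated}: if $(\scss{M})_{q_0}$ strictly contained the span of its leaf images, shrinking it to that span keeps $\Theta$ fixed but lowers the dimension, giving a subrepresentation of strictly larger slope and contradicting maximality of $\mu(\scss{M})$.

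Next I would pin down $I$. Writing $k=|I|$, $d=d(M)_I$, instability gives $\mu(\scss{M})>\mu(M)$, hence $d\le k-1$. To see $1\notin I$, compare $M_I$ with $M_{I\setminus\{1\}}$: a short slope computation shows that if adjoining $q_1$ strictly enlarges the span of the remaining leaves of $I$ (by one or two dimensions), then $M_{I\setminus\{1\}}$ already has strictly larger slope, contradicting maximality. The only surviving possibility is $1\in I$ with $x,y$ lying in $W:=\mathrm{span}\{w_i:i\in I\setminus\{1\}\}=(M_I)_{q_0}$; here I would argue directly that $M$ decomposes. Because $q_0$ is a sink, every endomorphism condition says ``the image of a leaf is an eigenvector of $M_{q_0}$'', the sole linkage being that $x,y$ share an eigenvalue; when $x,y\in W$ and $W$ is spanned by the other leaves of $I$, one diagonalizes the action on $W_0$ along the leaf directions and peels off a proper summand, contradicting indecomposability. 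With $1\notin I$ I then exclude $d\le k-2$: a minimal dependent subset $C\subsetneq I$ has $d(M)_C=|C|-1$ and $|C|<k$, and since $t\mapsto\tfrac{t}{2t-1}$ is strictly decreasing, $\mu(M_C)>\mu(M_I)$, again contradicting maximality. Hence $d=k-1$, so $\udim\scss{M}=(k-1)q_0+\sum_{i\in I}q_i$ with $m:=k-1\ge 1$ and $1\notin I$; since $|I|=m+1\le n$, after labeling the symmetric leaves we may take $I\subset\{1,\dots,n\}$.

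For exceptionality, I would compute $\sk{\beta}{\beta}=1$ for $\beta=\udim\scss{M}$ straight from the Euler form (one sink of multiplicity $m$ with $m+1$ single arrows into it), so $\beta$ is a real root. It then suffices to prove $\scss{M}$ is a brick. The same circuit comparison shows the unique relation among $\{w_i:i\in I\}$ has full support, so every proper leaf subset of $I$ has independent images and slope $\tfrac12<\mu(\scss{M})$; thus $\scss{M}$ is in fact stable, whence $\End(\scss{M})=\kk$ and, combined with $\sk{\beta}{\beta}=1$, $\Ext(\scss{M},\scss{M})=0$. So $\scss{M}$ is exceptional.

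Finally, for indecomposability of $V:=M/\scss{M}$, maximality of $\scss{M}$ shows no leaf outside $I$ maps to $0$ in $V_{q_0}=W_0/W$ (if $w_j\in W$ then $M_{I\cup\{j\}}$ would be strictly denser than $M_I$), and not both $x,y$ lie in $W$; counting then gives $\udim V=(n-m)q_0+\sum_{i\notin I}q_i$ with $n-m+1$ nonzero leaf images spanning $V_{q_0}$, hence a single residual relation. If $V$ were decomposable, that relation would lie in one block and the complementary (relation--free) block would split off a leaf mapping isomorphically onto a line; but a $V$--leaf whose image is independent of the others is equally independent of all leaf images of $M$ (their span already contains $W$), so the corresponding line is a genuine direct summand of $M$, contradicting indecomposability. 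Thus $V$ is indecomposable. The main obstacle is exactly this last step: a splitting of a quotient $M/\scss{M}$ need not lift to a splitting of $M$ in general, so one must exploit the specific geometry — a single residual relation together with $q_0$ being a sink, which forces direct summands to be visible at the level of individual leaf directions — to carry the decomposition back up to $M$.
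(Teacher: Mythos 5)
Your reduction to the configuration of the vectors $w_i,x,y$ is reasonable, and several steps are correct: leaf-generation of $\scss M$, the bound $d\le k-1$, and the slope comparison showing that adjoining $q_1$ cannot strictly enlarge the span. But the step excluding $d\le k-2$ is genuinely wrong. You claim a circuit $C\subsetneq I$ satisfies $\mu(M_C)>\mu(M_I)$ ``since $t\mapsto t/(2t-1)$ is strictly decreasing.'' That monotonicity only compares $\mu(M_C)=|C|/(2|C|-1)$ with $k/(2k-1)$; but when $d\le k-2$ the actual slope is $\mu(M_I)=k/(k+d)\ge k/(2k-2)>k/(2k-1)$, and the desired inequality $\mu(M_C)>\mu(M_I)$ is equivalent to $k>|C|\,(k-d)$, which fails once the circuit is large. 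Concretely, take $k=4$ vectors in general position in a $2$-dimensional space (so $d=k-2$): every circuit has $|C|=3$, and $\mu(M_C)=3/5<2/3=\mu(M_I)$. Worse, in this situation $M_I$ is itself semistable and of maximal slope among all subrepresentations supported on $I$, so no argument using only slope-maximality (or semistability) of $\scss M$ can rule out $d\le k-2$; one must invoke indecomposability of the ambient $M$. This is exactly what the paper does: since $1\in\hat I$, the complementary leaves give $d(M)_{\hat I}\le|\hat I|+1$, so $d(M)_I\le|I|-2$ forces $\dim (M_I)_{q_0}+\dim (M_{\hat I})_{q_0}\le n$; as the leaf images span $M_{q_0}$, this yields $M_{q_0}=(M_I)_{q_0}\oplus(M_{\hat I})_{q_0}$ and hence $M=M_I\oplus M_{\hat I}$, a contradiction.

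Two smaller points. First, in your case $1\in I$ with $x,y\in W$, ``diagonalizing along leaf directions and peeling off a summand'' asserts exactly what must be proved; the honest argument is again a complement count: $\dim W+\dim\mathrm{span}\{w_j\mid j\in\hat I\}\le (k-1)+(n+1-k)=n$ while these two subspaces together span $M_{q_0}$, so the sum is direct and $M$ splits. Second, in the last step, ``the relation-free block splits off a leaf mapping isomorphically onto a line'' misses the case where the relation-free block consists of the doubled leaf $q_1$ alone (with $\bar x,\bar y$ independent); this case also lifts to a splitting of $M$, but needs its own argument. Alternatively, as in the paper, one can avoid lifting the decomposition altogether: any relation-free summand $V$ of $M/\scss M$ satisfies $\Hom(V,\scss M)=0$ and $\sk{\udim V}{\udim \scss M}=0$, hence $\Ext(V,\scss M)=0$, so the extension $0\to\scss M\to M\to M/\scss M\to 0$ splits off $V$ directly, contradicting indecomposability of $M$.
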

\begin{proof}
If $M$ is unstable, there exists $\emptyset\neq I\subsetneq\{1,\ldots,n\}$ such that $d(M)_I<|I|$. Let $\hat I:=\{1,\ldots,n+1\}\backslash I$. If we had $d(M)_I\leq |I|-2$, from $d(M)_{\hat I}\leq |\hat I|+1$, it can easily be seen that $M$ were decomposable. Indeed, we either have $M_0\cong (M_I)_0\oplus (M_{\hat I})_0$ inducing a direct sum decomposition of $M$ or even $M_I\cap M_{\hat I}\neq \{0\}$ which makes the simple representation $S_{q_0}$ a direct summand. Thus it follows that $d(M)_I=|I|-1$.

If $1\in I$ with $d(M)_I<|I|$, we had $d(M)_{\hat I}\leq |\hat I|$ and thus the same argument shows that $M$ were decomposable. Thus $M$ has a subrepresentation of the form as claimed. Now it can be shown inductively that $\mathrm{scss}(M)$ is the subrepresentation of this form such that $m$ is minimal.

Write $U=\mathrm{scss}(M)$. We have $\udim M/U=(n-m)q_0+\sum_{i\in \hat I}q_i$ with $|\hat I|=n-m$ and $1\in\hat I$. If $M/U$ were decomposable, it is straightforward that $M/U$ had a direct summand $V$ of dimension $lq_0+\sum_{i\in I'}q_i$ with $|I'|=l<n-m$ and $1\notin I'$. But then we had $\Ext(V,U)=0$ contradicting the indecomposability of $M$.
\end{proof}
We continue proceeding along Remark \ref{rem:hn} and classify the possible quotients $M/\scss M$.

\begin{lemma}\label{lem:finalexample2}
Let $\beta(n)=nq_0+\sum_{i=1}^n q_i\in\ZZ_{\geq 0}^{T(n)_0}$. Then the indecomposables of dimension $\beta(n)$ can be parametrized by $\mathbb P^1$. Furthermore, $\beta(n)$ admits a cellular tree normal form.
\end{lemma}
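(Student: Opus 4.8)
The plan is to pass to an explicit matrix description, read off the moduli of indecomposables directly, and then produce the cells. Since $\beta(n)$ puts $n$ at $q_0$, a $1$ at each of $q_1,\dots,q_n$, and $0$ at $q_{n+1}$, a representation $M$ of dimension $\beta(n)$ is a central space $\kk^n$ together with $n+1$ vectors in it: the images $v_{a_1},v_{a_2}\in\kk^n$ of the two arrows $a_1,a_2\colon q_1\to q_0$, and the images $v_{b_1},\dots,v_{b_{n-1}}\in\kk^n$ of the single arrows $b_1,\dots,b_{n-1}$ (the arrow $b_n$ is forced to be zero). An immediate computation gives $\sk{\beta(n)}{\beta(n)}=0$, so $\beta(n)$ is isotropic.

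First I would establish the combinatorial criterion for indecomposability together with the brick property. If the $v$'s do not span $\kk^n$ a summand $S_{q_0}$ splits off, so we may assume they span; they then satisfy a relation $r=\sum_j\mu_j v_{b_j}+\nu_1 v_{a_1}+\nu_2 v_{a_2}=0$, unique up to scalar. Because every leg is one-dimensional and the two arrows $a_1,a_2$ emanate from the \emph{same} leg $q_1$, any direct sum decomposition amounts to partitioning the legs and splitting $\kk^n$ accordingly, and such a splitting exists exactly when $r$ is supported on the vectors coming from a proper sub-union of the legs. Hence $M$ is indecomposable if and only if the $v$'s span, all $\mu_j\neq 0$, and $(\nu_1,\nu_2)\neq(0,0)$. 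The crucial point is then that such an $M$ is a brick: any $\phi\in\End(M)$ forces each $v$ to be an eigenvector of the central component $\phi_{q_0}$, with $v_{a_1},v_{a_2}$ sharing one eigenvalue $c_1$; applying $\phi_{q_0}$ to $r$ and using $\mu_j\neq0$, $(\nu_1,\nu_2)\neq0$ forces all these eigenvalues to coincide, whence $\phi_{q_0}$ is scalar and $\End(M)=\kk$. This yields $\scl(\beta(n))=1$ and exhibits an indecomposable with trivial endomorphism ring, so $\beta(n)$ is an isotropic Schur root with $\scl(\beta(n))=1$.

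Next I would compute the moduli. Viewing the relation as the kernel line of the surjection $\kk^{n+1}\twoheadrightarrow\kk^n$ sending standard basis vectors to the $v$'s, two spanning configurations with the same kernel differ by $\Gl_n$ acting at $q_0$; thus the relation line is a complete invariant up to base change at $q_0$. The remaining base-change freedom rescales each single leg (acting on the $\mu_j$) and the double leg $q_1$ (acting on $\nu_1,\nu_2$ by a common scalar); normalizing all $\mu_j=1$ leaves precisely the class $(\nu_1:\nu_2)\in\Pn^1$. Therefore the isomorphism classes of indecomposable representations of dimension $\beta(n)$ are in bijection with $\Pn^1$, which is the first assertion. (Alternatively, this $\Pn^1$ is the family of Proposition \ref{pro:isotropic} realized via Proposition \ref{pro:KnPn} and Theorem \ref{thm:cells} with $\Gr_1(\kk^2)=\Pn^1$.)

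Finally, for the cellular tree normal form I would either invoke Theorem \ref{thm:isotropic}, whose hypotheses have now been checked, or exhibit the two cells explicitly along $\Pn^1=\A^1\sqcup\{\mathrm{pt}\}$. For $(\nu_1:\nu_2)=(0:1)$ one takes the tree module $T$ whose coefficient quiver is the Hamiltonian path $e_1\,\overset{a_1}{-}\,f_1\,\overset{a_2}{-}\,e_2\,\overset{b_1}{-}\,g_1\,\overset{b_1}{-}\,e_3\,-\,\cdots\,-\,g_{n-1}\,-\,e_n$, i.e. $v_{a_1}=e_1$, $v_{a_2}=e_2$, $v_{b_j}=e_{j+1}+e_{j+2}$ for $j\leq n-2$ and $v_{b_{n-1}}=e_n$; deforming the single entry $(v_{b_{n-1}})_1=\lambda$ sweeps out all classes with $\nu_2\neq 0$ and defines a one-dimensional subspace $U_T\subset R(T,T)$ which is strong and separating. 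The mirror choice interchanging $a_1$ and $a_2$ gives a tree module for $(1:0)$ with deformation space $\{0\}$. Together these form a mosaic of two cells of dimensions $1$ and $0$, i.e. a cellular tree normal form, consistent with $a_{\beta(n)}(q)=\#\Pn^1(\F_q)=q+1$. The main obstacle is the indecomposability-and-brick analysis of the second paragraph, where the shared leg $q_1$ must be handled with care so that $a_1$ and $a_2$ are constrained to carry a common eigenvalue and cannot be separated into distinct summands.
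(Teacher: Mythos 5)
Your proof is correct, but it takes a genuinely different route from the paper's. The paper first applies a BGP reflection at the sink $q_0$, turning $\beta(n)$ into a dimension vector with a $1$ at $q_0$, and gets the $\Pn^1$-parametrization by generalizing the extended Kronecker computation of Section \ref{ex:extkronecker}; it then produces the cells with the geometric machinery of Section \ref{sec:geometric}: for a suitable stability all indecomposables of dimension $\beta(n)$ are stable, a torus action on the moduli space has exactly two fixed points $T_1,T_2$, and Theorem \ref{thm:geomcells} turns the lifted attracting sets into the mosaic $\{(T_1,T_1-U_1),(T_2,\{0\})\}$. You instead argue by elementary linear algebra: a representation is a spanning family of $n+1$ vectors in $\kk^n$ (else $S_{q_0}$ splits off), such a family carries a unique relation line, and indecomposability, the brick property, and isomorphism are all read off from that relation, making $(\nu_1:\nu_2)\in\Pn^1$ a complete invariant. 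The delicate point is exactly where you flag it --- the shared leg $q_1$ forces the relation to involve every leg and forces endomorphisms to rescale the relation coefficients legwise --- and your argument there is sound (uniqueness of the relation up to scalar, since $n+1$ vectors span an $n$-dimensional space, is what makes both the decomposition criterion and the eigenvalue argument work). What each approach buys: yours is self-contained, works over an arbitrary field (the paper's Theorem \ref{thm:geomcells} is only available for $\kk=\C$), and additionally establishes $\scl(\beta(n))=1$; note that this brick property is genuinely needed if you take your first option of quoting Theorem \ref{thm:isotropic}, and that this theorem only yields the second claim of the lemma, so your direct computation of the moduli is not redundant. The paper's route, besides illustrating its torus-action method, explains structurally why the answer agrees with the extended Kronecker case. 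Your explicit second option lands on essentially the same concrete output as the paper --- one tree module with a one-dimensional strong and separating deformation space plus one rigid tree module --- just with different trees and with the roles of the two charts of $\Pn^1$ exchanged (your affine cell covers $\{\nu_2\neq 0\}$, the paper's covers the other chart). One cosmetic slip: the end of your drawn Hamiltonian path should read $\cdots - e_n - g_{n-1}$, since by your own formulas $v_{b_{n-2}}=e_{n-1}+e_n$ and $v_{b_{n-1}}=e_n$, so $g_{n-1}$ is a leaf attached to $e_n$; the formulas, which are what the argument uses, are correct.
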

\begin{proof}First note that, applying the BGP-reflection \cite{bgp} functor to the sink $q_0$, the dimension vector $\beta(n)$ becomes one at $q_0$. For the reflected dimension vector, Example \ref{ex:extkronecker} for the dimension vector $(1,1,1)$ can be generalized in such a way showing that the indecomposables are parametrized by $\Pn^1$. 

To obtain a cellular tree normal form for $\beta(n)$ itself, it is now convenient to apply Theorem \ref{thm:geomcells} because together with the previous observation it shows that all representations are stable with respect to a certain stability. Actually, there exists a torus action on the moduli space with two torus fixed points $T_1$ and $T_2$ inducing the following cells of indecomposables
\begin{equation}U_1=\att(T_1)=\left(\begin{pmatrix}1\\1\\\vdots\\1\\\ast\end{pmatrix},e_n,e_1,\ldots,e_{n-1}\right),\,U_2=\att(T_2)=\left(e_n,\begin{pmatrix}1\\1\\\vdots\\1\\ 0\end{pmatrix},e_1,\ldots,e_{n-1}\right).\end{equation}
This gives a mosaic $\{(T_1,T_1-U_1),(T_2,\{0\})\}$ parametrizing all indecomposables and thus a cellular tree normal form for $\beta(n)$.
\end{proof}

\begin{theorem}
The dimension vectors $\alpha(n)$ admit a cellular tree normal form.
\end{theorem}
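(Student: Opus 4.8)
The plan is to split $\mathrm{Ind}(T(n),\alpha(n))$ into its stable and unstable parts with respect to $\Theta=(0,1,\ldots,1)$ and to treat each by a different technique, exactly as in the proof of Theorem \ref{thm:isotropic}. The stable indecomposables are already disposed of by the preceding discussion: the torus action attached to $\gamma=(1,2,0,\ldots,0)$ has precisely the $\binom{n}{m}$ fixed points $T(i_1,\ldots,i_m)$, each of which is an exceptional (hence tree) module of $\widehat{T(n)}$, and Theorem \ref{thm:geomcells} lifts the Bia{\l}ynicki-Birula cell at each fixed point to a strong and separating subspace $U_{T(i_1,\ldots,i_m)}$ of dimension $m$. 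Since every stable point of $R^{\Theta-\mathrm{st}}_{\alpha(n)}(T(n))$ lies in the attracting set of a unique fixed point (Theorem \ref{bb}) and is recovered, up to isomorphism, by the orbit description of Proposition \ref{pro:Uaction}, the family $\{(T(i_1,\ldots,i_m),U_{T(i_1,\ldots,i_m)})\}$ is a mosaic of tree modules whose cells cover all stable indecomposables.

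For the unstable indecomposables I would invoke Lemma \ref{lem:finalexample1}: each unstable $M$ has an HN-filtration $0\subset U\subset M$ with $U=\scss M$ exceptional of dimension $mq_0+\sum_{i\in I}q_i$ and $V:=M/\scss M$ indecomposable of dimension $(n-m)q_0+\sum_{i\in\hat I}q_i$. Reading this dimension vector on its support, which is a copy of $T(n-m-1)$ containing the double arrow, it coincides with a $\beta(n-m)$-type root, so Lemma \ref{lem:finalexample2} applies: $V$ lies in a $\mathbb{P}^1$-family admitting a cellular tree normal form, is Schurian, and is a tree module. Thus I obtain cells of Schurian tree modules $(U,\{0\})$ and $(V,U_V)$, with $\Hom(U,V(\lambda))=0$ for all $\lambda\in U_V$ by the slope argument of Lemma \ref{scss} (as $U$ and each $V(\lambda)$ are semistable of strictly decreasing slope). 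Choosing a universal tree-shaped basis of $\Ext(V,U)$ compatible with the two coefficient quivers, Theorem \ref{thm:treecells} (the $d=1$ case of Theorem \ref{thm:cells}) produces tree modules $B_j$ as middle terms of $0\to U\to B_j\to V\to 0$ together with cells $A_j$, forming a mosaic of indecomposables that realizes exactly the unstable $M$ of the given HN-type. The converse direction of Lemma \ref{lem:finalexample1} (indecomposability of $V$, hence of $M$) guarantees that every unstable indecomposable of this type is obtained.

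Finally I would assemble the global mosaic. The stable cells and the unstable cells are automatically disjoint because stability is an isomorphism invariant, and two unstable cells coming from different HN-types are disjoint because $\udim\scss M$ is an isomorphism invariant; within a single type, disjointness and indecomposability are furnished by Theorem \ref{thm:treecells}. Since every cell is, by construction, a space of deformations of a tree module, the resulting collection is a cellular tree normal form for $\alpha(n)$.

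The step I expect to be the main obstacle is the verification hidden in the second paragraph: that a single set of standard basis vectors represents a tree-shaped basis of $\Ext(V(\lambda),U)$ simultaneously for all $\lambda$ in the $\mathbb{P}^1$-cell $U_V$ (that is, that the chosen basis is genuinely universal), together with the constancy of $\dim\Ext(V,U)$ along the family. This is precisely the compatibility between coefficient quivers and extension bases that must be checked by hand, exactly as in the proof of Theorem \ref{thm:isotropic}; everything else is bookkeeping with dimension vectors and appeals to the cited results.
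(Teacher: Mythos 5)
Your proposal is correct and follows essentially the same route as the paper: the stable indecomposables are handled by the preceding torus-action/Bia{\l}ynicki-Birula discussion via Theorem \ref{thm:geomcells}, and the unstable ones via Lemma \ref{lem:finalexample1} (HN-filtration with exceptional $\scss M$ and indecomposable quotient of $\beta$-type), Lemma \ref{lem:finalexample2} for the quotient family, and Theorem \ref{thm:treecells} applied to the pairs of cells, with disjointness of cells argued exactly as you do. The one step you flag as the main obstacle --- universality of the tree-shaped basis along the $\Pn^1$-family --- is dispatched in the paper by the observation that $\dim\Ext(N,\scss M)=m$ for \emph{every} indecomposable $N$ of dimension $\beta(\hat I)$ (an Euler-form computation combined with the vanishing of the relevant Hom spaces), so your plan closes up without further difficulty.
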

\begin{proof}
By Lemma \ref{lem:finalexample1} it follows that, for every unstable indecomposable $M$, there exists a short exact sequence of the form
\begin{equation}\ses{\mathrm{scss}(M)}{M}{M/\mathrm{scss}(M)}\end{equation}
with stable kernel and indecomposable quotient. Moreover, we have $\gamma(I):=\udim \mathrm{scss}(M)=mq_0+\sum_{i\in I}q_i$, $|I|=m+1$ and $1\notin I$ for some $1\leq m\leq n$. In particular, $\mathrm{scss}(M)$ is exceptional and thus a tree module. Furthermore, $M/\mathrm{scss}(M)$ is indecomposable of dimension $\beta(\hat I):=|\hat I|q_0+\sum_{i\in \hat I}q_i$ with $|\hat I|=n-m$.

The other way around, first note that 
$\dim\Ext(N,\mathrm{scss}(M))=m$ for every indecomposable $N$ with $\udim N=\beta(\hat I)$. In particular, for each such $I$ we can apply Theorem \ref{thm:treecells} and Lemma \ref{lem:finalexample2} to the pairs of cells $(M_{\gamma(I)},\{0\}),(T^I_1,T^I_1-U^I_1))$ and $(M_{\gamma(I)},\{0\}),(T^I_2,\{0\}))$ to obtain a $\Pn^{m-1}\times(T_1^I- U_1^I)$- and a $\Pn^{m-1}$-family of indecomposables respectively. Here $M_{\gamma(I)}$ is the exceptional of dimension $\gamma(I)$ and $(T^I_1,T_1^I-U^I_1)$ is the obvious modification of $(T_1,T_1-U_1)$ constructed in Lemma \ref{lem:finalexample2}.

As all representations in the cells from above have a tree normal form, every unstable indecomposable is obtained in this way and as all constructed indecomposables are nonisomorphic, this shows that $\alpha(n)$ admits a cellular tree normal form.
\end{proof}


\bibliographystyle{alpha}
\bibliography{KinserWeist}

\end{document}